\documentclass[11pt]{amsart}

\usepackage{amsmath, amssymb}
\usepackage{mathtools}
\usepackage[scr=dutchcal,calscaled=1.05,bb=boondox,bbscaled=1.1]{mathalfa}
\usepackage{tikz-cd}
\usepackage{enumerate}
\usepackage{tensor}
\usepackage{color}
\usepackage{graphicx}
\usepackage[top=1.1in, bottom=1in, left=1in, right=1in]{geometry}
\usepackage{hyperref}

\relpenalty=10000
\binoppenalty=10000


\numberwithin{equation}{section}
\newtheorem{thm}{Theorem}[section]
\newtheorem{lemma}[thm]{Lemma}
\newtheorem{prop}[thm]{Proposition}
\newtheorem{cor}[thm]{Corollary}
\newtheorem{conj}[thm]{Conjecture}

\theoremstyle{definition}
\newtheorem{defn}[thm]{Definition}
\newtheorem{question}[thm]{Question}

\theoremstyle{remark}
\newtheorem{rmk}[thm]{Remark}
\newtheorem{const}[thm]{Construction}
\newtheorem{eg}[thm]{Example}

\newcommand{\stoppedarrow}{%
	\mathrel{%
		\text{%
			\ooalign{$\mapsto$\cr\reflectbox{$\mapsto$}\cr}%
		}%
	}%
}
\newcommand{\C}{{\mathbb C}}
\newcommand{\Z}{{\mathbb Z}}
\newcommand{\R}{{\mathbb R}}
\renewcommand{\H}{{\mathbb H}}

\newcommand{\glu}[2]{\tensor[_{#1}]{\cup}{_{#2}}}
\newcommand{\sglu}[2]{\tensor[_{#1}]{\overset{\stoppedarrow}{\cup}}{_{#2}}}
\newcommand{\dglu}[2]{\tensor[_{#1}]{\overset{\rightarrow}{\cup}}{_{#2}}}
\newcommand{\into}{\hookrightarrow}
\newcommand{\F}{\mathcal F}
\newcommand{\W}{\mathcal W}
\newcommand{\B}{\mathcal B}
\renewcommand{\O}{{\mathcal O}}
\newcommand{\K}{{\mathbf K}}
\DeclareMathOperator{\hocolim}{hocolim}
\DeclareMathOperator{\Perf}{Perf}
\DeclareMathOperator{\tw}{tw}
\DeclareMathOperator{\cone}{cone}
\renewcommand{\Re}{{\operatorname{Re}}}

\newcommand{\Ob}{{\mathcal O}{\mathscr b}}
\newcommand{\Fun}{{\mathcal F}\mathscr{un}}

\title{Orlov and Viterbo functors in partially wrapped Fukaya categories}
\author{Zachary Sylvan}

\begin{document}

\begin{abstract}
	We study two functors between (partially) wrapped Fukaya categories. The first is the Orlov functor from the Fukaya category of a stop to the Fukaya category of the ambient sector. We give a geometric criterion for when this functor is spherical in the sense of Anno--Logvinenko. This criterion is a generalization of the situation where the stop comes from a Landau--Ginzburg model.
	
	The second functor is the Viterbo transfer map from a Liouville domain to a subdomain. We show that when the domain and subdomain are independently Weinstein, this functor is a homological epimorphism, which means that it becomes a localization after passing to module categories. This should be compared with a result of Ganatra--Pardon--Shende, which states that the Viterbo map is a genuine localization when the cobordism is Weinstein.
\end{abstract}

\maketitle

\section{Introduction}\label{ch:intro}


The goal of this paper is to discuss a circle of ideas surrounding two functors between partially wrapped Fukaya categories.

The first of these functors is the \emph{Orlov functor}, which associates to a Lagrangian in a stop the product of that Lagrangian with a transverse arc. When $\sigma\subset\partial_\infty M$ is a Weinstein hypersurface\footnote{For the purposes of this introduction, we take the word \emph{stop} to mean a Liouville hypersurface of a contact manifold.}, this is just the functor which sends a co-core in $\sigma$ to the small disk in $M$ linking the corresponding core. The Orlov functor for a stop generalizes the better-known cup functor on a symplectic Landau--Ginzburg model, which sends a Lagrangian in the fiber to its parallel transport around a large arc.

The second functor is the \emph{Viterbo transfer map}, which is defined for a Liouville subdomain $M^\mathrm{in}\subset M$. Morally speaking, it sends a Lagrangian $L\subset M$ to its intersection $L\cap M^\mathrm{in}$. This was partially defined in \cite{AbouzaidSeidel2010}, but for technical reasons it is difficult to extend that definition to the generality that one would like. Instead, we will use the definition via Orlov functors which appears in \cite{GanatraPardonShende2018} and verify that the definitions are  essentially the same (see Section \ref{sec:abouzaid--seidel}). This latter definition requires a technical assumption of its own, that $M^\mathrm{in}$ ``satisfy stop removal'', but this is fairly mild by current standards: in that same paper the authors prove that all Weinstein domains satisfy stop removal.

\subsection{The Orlov functor}
For a symplectic Landau--Ginzburg model, it is a theorem of Abouzaid and Ganatra \cite{AbouzaidGanatra20XX} that the Orlov functor is spherical in the sense of \cite{AnnoLogvinenko2017}. In this setting, its left (resp. right) adjoint $\cap^L$ (resp. $\cap^R$) is given by wrapping positively (resp. negatively) and intersecting with a fixed fiber, which plays the role of the stop. Moreover, the dual spherical twist and dual spherical cotwist are given up to shift by the symplectomorphisms ``wrap-once'' and ``monodromy'', respectively. By contrast, the Orlov functor for a general stop has neither left nor right adjoints.

We would like to account for this difference by seeing exactly what geometric properties of $W$ are responsible for the above structure. As it turns out, the geometric property that underlies sphericality is the same as the structure which provides the monodromy and wrap-once autoequivalences.

\begin{defn}
	A stop $\sigma\subset\partial_\infty M$ is \emph{swappable} if there is some isotopy of stops $\phi$ from the positive Reeb pushoff $\sigma^+$ to the negative Reeb pushoff $\sigma^-$ which avoids the original stop $\sigma$.
\end{defn}

The fiber at infinity of a Landau--Ginzburg model is the prototypical example of a swappable stop. Indeed, rotation around the $S^1$ in the base precisely gives a long path from $\sigma^+$ to $\sigma^-$. This path has the additional feature of being globally embedded and globally positive, but that turns out not to be necessary.

The definition of swappability makes sense for Lagrangians as well as for stops, where it turns out to have been studied before:

\begin{thm}[\cite{FrauenfelderLabrousseSchlenk2015, Dahinden2017}]\label{thm:contact Bott--Samelson}
	Let $\Lambda=S_p^*Q\subset S^*Q=\partial_\infty T^*Q$ be a swappable cosphere fiber. Then $H^*(Q,\Z)$ is generated as a ring by one element, and $\lvert\pi_1(Q)\rvert\le2$.
\end{thm}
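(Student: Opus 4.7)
The plan is to reduce the statement to the classical Bott--Samelson theorem by showing that swappability of the cosphere fiber $S_p^*Q$ forces every geodesic emanating from $p$ to be a loop.

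First, interpret the Reeb pushoffs geometrically. With the canonical contact form on $S^*Q$, the Reeb flow is the unit cogeodesic flow of $Q$, so for small $\epsilon>0$ the pushoffs $(S_p^*Q)^\pm$ consist of the covectors obtained by flowing $S_p^*Q$ by time $\pm\epsilon$ along geodesics from $p$. A swappability isotopy $\Lambda_s$ from $(S_p^*Q)^+$ to $(S_p^*Q)^-$ avoiding $S_p^*Q$ then produces, for each $v\in S_p^*Q$, a continuous path in $S^*Q\setminus S_p^*Q$ connecting the time-$\epsilon$ and time-$(-\epsilon)$ Reeb images of $v$. Concatenating with the short Reeb arc through $v$ gives a loop in $S^*Q$ having linking number one with $S_p^*Q$.

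The crucial step is to upgrade these topological loops, parameterized by $v\in S_p^*Q$, to an honest family of closed Reeb orbits meeting $S_p^*Q$ transversely---equivalently, to a family of geodesic loops at $p$, one for every initial direction, all of uniformly bounded length. This is the technical heart of the cited results \cite{FrauenfelderLabrousseSchlenk2015, Dahinden2017} and is obtained by a continuation/Floer-theoretic argument in $T^*Q$: the isotopy forces Reeb chords from the positive to the negative pushoff in the complement of the fiber, and these chords organize, via a birth-death and compactness analysis, into the desired family of geodesic loops.

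Once every unit tangent direction at $p$ is realized as the initial velocity of a geodesic loop at $p$ of uniformly bounded length, the classical Bott--Samelson theorem (see Besse, \emph{Manifolds all of whose geodesics are closed}) applies: such a compact Riemannian manifold $Q$ has the integral cohomology of a compact rank-one symmetric space, so $H^*(Q;\Z)$ is generated as a ring by a single element, and $\lvert\pi_1(Q)\rvert\le 2$. The main obstacle is the middle step---extracting a genuine family of geodesic loops from the formal existence of a Legendrian isotopy in the complement of the fiber; everything else is either a straightforward dictionary between contact geometry on $S^*Q$ and Riemannian geometry on $Q$, or a classical appeal to Bott--Samelson.
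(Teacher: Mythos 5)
This theorem is not proved in the paper: it is imported as a black-box citation of Frauenfelder--Labrousse--Schlenk and Dahinden, and the paper's only contribution nearby is Lemma~\ref{lem:contact swap characterizations}, which reconciles its notion of swappability with the ``positive Legendrian isotopy avoiding $\Lambda$'' hypothesis under which those authors work. So there is no internal proof for your proposal to be measured against, and the reduction from the paper's definition to theirs is itself a (small but nontrivial) step you skip.

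As a blind reconstruction, your outline is plausible in spirit, but the gap you flag is not a side remark---it is the whole theorem. Upgrading a Legendrian isotopy to a geodesic loop at $p$ in \emph{every} initial direction, with a uniform length bound, is precisely the hard content, and a gesture at ``birth-death and compactness analysis'' does not explain how one rules out directions with no returning geodesic. Moreover it is not clear the cited works even proceed this way: the Floer-theoretic arguments in that literature typically establish a periodicity in $H_*(\Omega_pQ)$ (the wrapped Floer cohomology of the cotangent fiber) and then read off the cohomological conclusion from the Serre spectral sequence of $\Omega_pQ\to PQ\to Q$, bypassing the geometric claim ``all geodesics at $p$ are loops'' and the classical Bott--Samelson theorem altogether. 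Two smaller imprecisions: tracing a fixed $v\in S_p^*Q$ through the Legendrian isotopy gives a path terminating at some \emph{other} point of $(S_p^*Q)^-$, not a loop based at $v$; and ``linking number one with $S_p^*Q$'' is not generally well defined, since $S_p^*Q$ has codimension $n=\dim Q$ rather than $2$ inside $S^*Q$.
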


See Lemma \ref{lem:contact swap characterizations} for the relation between our notion of swappability and the notion which appears in the proof of Theorem \ref{thm:contact Bott--Samelson}.

Returning to our objective, let $\sigma$ be a swappable stop of $M$. By following the Reeb flow $R$ from $\sigma$ to $\sigma^+$, the isotopy $\phi$ from $\sigma^+$ to $\sigma^-$, and $R$ again from $\sigma^-$ to $\sigma$, we obtain a self-isotopy of $\sigma$, and we define the $\mathbf W_\phi$ to be the autoequivalence of the partially wrapped Fukaya category $\W(M\setminus\sigma)$ obtained by following this isotopy. The same isotopy also induces a monodromy symplectomorphism on $\sigma$, and we define $\mathbf M_\phi$ to be the resulting autoequivalence of the fully wrapped Fukaya category $\W(\sigma)$.

\begin{thm}\label{thm:swappable stops give spherical Orlov functors}
	Let $\sigma$ be a swappable stop satisfying stop removal. Then the corresponding Orlov functor is spherical, the monodromy autoequivalence $\mathbf M_\phi$ agrees with the shifted dual cotwist $\mathbf M'[-2]$, and the wrap-once autoequivalence $\mathbf W_\phi$ agrees with the dual twist $\mathbf W'$.
	
	In particular, $\mathbf M_\phi$ and $\mathbf W_\phi$ are independent of $\phi$.
\end{thm}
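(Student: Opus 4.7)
The plan is to construct left and right adjoints to the Orlov functor $\cup\colon\W(\sigma)\to\W(M\setminus\sigma)$ from the swappability data, and then to identify the categorical twist and cotwist with the geometric autoequivalences $\mathbf{W}_\phi$ and $\mathbf{M}_\phi$.

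For the adjoints, the isotopy $\phi$ gives a recipe for transporting any $K\in\W(M\setminus\sigma)$ across $\sigma$: near $\sigma$, push the strands of $K$ to the positive pushoff $\sigma^+$ via Reeb flow, slide through the family $\phi$ to $\sigma^-$, and return to $\sigma$ by Reeb flow again. The Lagrangian swept out in a neighborhood of $\sigma$ intersects $\sigma$ in a Lagrangian that I take to define $\cap^R K$; the left adjoint $\cap^L K$ is defined symmetrically by reversing the roles of $\sigma^+$ and $\sigma^-$. The adjunction $\operatorname{Hom}_{\W(M\setminus\sigma)}(\cup L,K)\simeq\operatorname{Hom}_{\W(\sigma)}(L,\cap^R K)$ should follow directly: by stop removal, morphisms out of the Orlov-image object $\cup L$ are computed by Reeb chords from $L\subset\sigma$ to a wrapped representative of $K$, and the swappability isotopy matches these with the Reeb chords in $\sigma$ used to define $\operatorname{Hom}_{\W(\sigma)}(L,\cap^R K)$.

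With the adjoints established, sphericality reduces to showing that $T=\cone(\cup\cap^R\to\operatorname{id})$ on $\W(M\setminus\sigma)$ and $C=\cone(\operatorname{id}\to\cap^R\cup)[-1]$ on $\W(\sigma)$ are autoequivalences. I would compute $\cap^R\cup L$ geometrically: the small linking object $\cup L=L\times I$ is swept through $\sigma$ by $\phi$ and intersected with $\sigma$, producing $L$ itself (corresponding to the unit map $L\to\cap^R\cup L$) together with an extra piece isomorphic to $\mathbf{M}_\phi(L)$, in a degree accounting for the Reeb travel and the transversality of $I$. This identifies the cotwist $C$ with $\mathbf{M}_\phi$ up to a controlled shift; passing to the Serre-dualized cotwist $\mathbf{M}'$ converts this into the stated $\mathbf{M}_\phi\simeq\mathbf{M}'[-2]$. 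A parallel computation of $\cup\cap^R K$ identifies the twist $T$ with $\mathbf{W}_\phi$, yielding $\mathbf{W}_\phi\simeq\mathbf{W}'$.

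The main technical difficulty I expect is the chain-level realization: the wrapping prescription along $\phi$ must be compatible with the Floer continuation maps defining $\W$, so that the candidate $\cap^L$ and $\cap^R$ are honest $A_\infty$-functors representing the categorical adjoints. The stop removal hypothesis is precisely the tool for this, since it controls hom spaces in $\W(M\setminus\sigma)$ through the combined data of $\W(M)$ and the Reeb dynamics at $\sigma$. Once $\mathbf{M}_\phi$ and $\mathbf{W}_\phi$ are identified with categorical invariants of $\cup$, which is itself independent of $\phi$, the concluding independence statement is automatic.
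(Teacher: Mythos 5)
Your proposal takes a genuinely different route from the paper, and the route has a real gap at its center. You propose to construct the adjoints $\cap^L$ and $\cap^R$ of the Orlov functor directly, geometrically, by pushing a Lagrangian across $\sigma$ via the swappability isotopy and intersecting with the stop. The paper deliberately \emph{avoids} this: it never constructs the adjoints. Instead it proves Proposition \ref{prop:spherical swap iff spherical}, an algebraic statement (built on the Halpern-Leistner--Shipman $4$-periodicity criterion, Theorem \ref{thm:HL-S}) that the \emph{existence of a spherical swap} of $F$ is equivalent to $F$ being spherical, together with formulas expressing $\mathbf W'$ and $\mathbf M'$ as restrictions of $S^2$ (with shifts). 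The geometric input then only needs to produce an autoequivalence $\mathbf S_\phi$ of the doubled sector $\W\bigl(M\glu{\sigma}{\sigma_0}\bar F\langle2\rangle\bigr)$ exchanging the two Orlov images, which follows readily from Lemma \ref{lem:Orlov deformation invariance} applied to the isotopy $\phi$. The rest of the proof is bookkeeping of shifts and rotations to verify positivity of the swap in the sense of Definition \ref{def:spherical swap}. Stop removal enters not to build adjoints, but to make the directed-gluing formula (Corollary \ref{cor:directed gluing formula}) and the $A_n$-sector presentation (Proposition \ref{prop:A_n sector presentation}) applicable, which is what identifies $\Perf\mathcal C$ with the glued sector's Fukaya category.

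The gap in your approach is precisely what you flagged at the end, but it is larger than a ``technical difficulty''; it is the crux. Promoting the prescription ``wrap $K$ through $\phi$ and intersect with $\sigma$'' to an honest $A_\infty$-functor that is verifiably left- (resp. right-) adjoint to $\imath_\sigma$, and then showing the twist and cotwist are quasi-equivalences, is essentially the content of the Abouzaid--Ganatra theorem cited for the Landau--Ginzburg case, and it is a substantial undertaking. In particular, the object-level description is already problematic: after removing the stop at $\sigma^+$, Reeb chords are allowed to cross $\sigma$, so the wrapped representative of $K$ does not cleanly intersect $\sigma$ in a single Lagrangian, and the candidate $\cap^R K$ must be a twisted complex, not an intersection. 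Your subsequent claim that $\cap^R\cup L$ is $L$ plus a piece $\mathbf M_\phi(L)$ is the conclusion one wants, not something one can read off geometrically without the functor already in hand. The paper's detour through spherical swaps is designed exactly so that one never has to confront any of this: all that is needed is that $\mathbf S_\phi$ exchanges two copies of $\W(\bar F)$ inside a glued sector, which is an isotopy-invariance statement. If you want to pursue the direct construction of the adjoints, you should expect to redo the Abouzaid--Ganatra analysis; if instead you want a short proof, you need the algebraic reduction to spherical swaps.
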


\begin{eg}\label{eg:punctured A^2}
One way of producing new swappable stops from old ones is to pass to a monodromy-invariant subdomain. As an example, consider the Landau--Ginsburg model $((\C^*)^2,W=x+y+1)$ mirror to the toric variety $\C^2$. The fiber $F$ is a pair of pants, where two of the legs are mirror to the coordinate axes and the third leg is mirror to the origin. We can pass to the monodromy-invariant subdomain $F_0$ which is the union of the first two cylinders. Under mirror symmetry, this corresponds to puncturing the origin. We are left with a stop of two components, each of which is independently swappable.

This situation is studied in the setting of FLTZ mirror symmetry \cite{FangLiuTreumannZaslow2011} in \cite{KatzarkovKerr2017}.
\end{eg}

The fact that $\mathbf M_\phi$ and $\mathbf W_\phi$ are independent of $\phi$ has immediate geometric consequence:

\begin{cor}
	If $\psi$ is any symplectomorphism of $M$ fixing $\sigma$ setwise, then the induced autoequivalence $\Psi^M$ of $\W(M,\sigma)$ commutes with $\mathbf W_\phi$, and the induced autoequivalence $\Psi^\sigma$ of $\W(\sigma)$ commutes with $\mathbf M_\phi$.
\end{cor}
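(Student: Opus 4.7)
The plan is to reduce the corollary to the independence of $\mathbf W_\phi$ and $\mathbf M_\phi$ from the choice of isotopy $\phi$, as established in Theorem \ref{thm:swappable stops give spherical Orlov functors}. The key observation is that $\psi$, fixing $\sigma$ setwise and preserving the sign of the Reeb flow at infinity, carries any swappability isotopy $\phi\colon\sigma^+\leadsto\sigma^-$ avoiding $\sigma$ to another such isotopy $\psi_*\phi$. (One may have to prepend or append a short Reeb isotopy so that the endpoints match $\sigma^+$ and $\sigma^-$ on the nose, but this does not affect the resulting autoequivalence.)

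First I would establish the naturality identities
\[
\Psi^M\circ\mathbf W_\phi\circ(\Psi^M)^{-1}=\mathbf W_{\psi_*\phi},\qquad
\Psi^\sigma\circ\mathbf M_\phi\circ(\Psi^\sigma)^{-1}=\mathbf M_{\psi_*\phi}.
\]
These follow from the general principle that the autoequivalence associated to a loop of stops (resp.\ to the induced loop of symplectomorphisms of $\sigma$) is equivariant under ambient symplectomorphisms that conjugate the loop. Concretely, the loop defining $\mathbf W_\phi$ is the concatenation of a Reeb path from $\sigma$ to $\sigma^+$, the isotopy $\phi$, and a Reeb path from $\sigma^-$ to $\sigma$; applying $\psi$ pointwise produces the analogous loop for $\psi_*\phi$, and naturality of the Floer-theoretic continuation maps gives the displayed identities.

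Next I would invoke the last sentence of Theorem \ref{thm:swappable stops give spherical Orlov functors} to conclude $\mathbf W_{\psi_*\phi}=\mathbf W_\phi$ and $\mathbf M_{\psi_*\phi}=\mathbf M_\phi$. Substituting into the naturality identities and cancelling the conjugating factors yields $\Psi^M\circ\mathbf W_\phi=\mathbf W_\phi\circ\Psi^M$ and $\Psi^\sigma\circ\mathbf M_\phi=\mathbf M_\phi\circ\Psi^\sigma$.

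The main obstacle is the naturality step: one must check that the assignment from a loop of stops (resp.\ a loop of symplectomorphisms of $\sigma$) to an autoequivalence of $\W(M,\sigma)$ (resp.\ of $\W(\sigma)$) is equivariant with respect to ambient symplectomorphisms preserving $\sigma$ setwise. This is a formal property of the Floer-theoretic construction used to define $\mathbf W_\phi$ and $\mathbf M_\phi$, but pinning it down requires careful bookkeeping of the $A_\infty$-functors and auxiliary data involved. Once this is in hand, the corollary is essentially tautological.
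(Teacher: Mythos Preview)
Your proposal is correct and follows essentially the same approach as the paper: conjugate the swap isotopy $\phi$ by $\psi$ to obtain another swap, use naturality to identify the conjugated autoequivalence with $\mathbf W_{\psi_*\phi}$ (resp.\ $\mathbf M_{\psi_*\phi}$), and then invoke the independence-of-$\phi$ clause of Theorem \ref{thm:swappable stops give spherical Orlov functors}. The paper's proof is a two-line version of exactly this, writing the conjugated swap as $\psi^{-1}\phi\psi$ and taking the naturality step for granted rather than flagging it as an obstacle.
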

\begin{proof}
	The conjugate $\psi^{-1}\phi\psi$ is also a swap of $\sigma$, so
	\[
	(\Psi^M)^{-1}\mathbf W_\phi\Psi^M\cong\mathbf W_\phi,
	\]
	and similarly with $\mathbf M_\phi$.
\end{proof}

We end with a couple observations and speculations about this story.

First, there is an algebraic analog of swappability, that of a \emph{spherical swap}, which appeared without a name in \cite{HalpernLeistnerShipman2016} as the square root of a window shift. It turns out that existence of a spherical swap is equivalent to sphericality (cf. Proposition \ref{prop:spherical swap iff spherical}). The closeness of the algebraic and symplectic definitions thus suggests that spherical Orlov functors which do not come from swappable stops should be fairly exotic.

Going backwards, swaps provide a simple symplectic interpretation of Segal's theorem that all autoequivalences are spherical twists \cite{Segal2017}. Specifically, suppose some autoequivalence $\Phi$ of $\W(\bar M)$ comes from a symplectomorphism $\phi$ of $\bar M$. Then we can enhance the mapping torus
\[
\left(\bar M\times T^*\R\right)/\sim
\]
of $\phi$ with a swappable stop whose monodromy is $\phi$ (note that the dual cotwist of a spherical functor is the twist of its left adjoint).

Finally, it is known that for Lefschetz fibrations, the spherical adjunction $\cap^L\dashv\imath$ is compatible with a relative proper Calabi-Yau structure \cite{Seidel2012,KatzarkovPanditSpaide2017} on the Orlov functor $\imath$. In general, the partially wrapped Fukaya category is not proper, but for a Weinstein pair (i.e. a Weinstein manifold $M$ with Weinstein stop $\sigma$) it is expected to be smooth. In this case, we expect the following conjecture to hold, which I learned from Ganatra.

\begin{conj}
	Let $(M,\sigma)$ be a Weinstein pair such that $M$ and $\sigma$ are individually Calabi-Yau. Then the Orlov functor $\imath_\sigma$ has a natural relative smooth Calabi-Yau structure in the sense of \cite{BravDyckerhoff2019}.
\end{conj}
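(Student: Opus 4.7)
The plan is to construct the relative smooth Calabi--Yau structure on $\imath_\sigma$ geometrically, via a relative cyclic open--closed map. First I would establish that both $\W(\sigma)$ and $\W(M,\sigma)$ are smooth: since $(M,\sigma)$ is a Weinstein pair, both categories are generated by cocores with smooth endomorphism algebras, following \cite{GanatraPardonShende2018} and related work. The Calabi--Yau hypotheses on $M$ and $\sigma$ provide the trivializations of the canonical bundles and the absolute gradings needed to define cyclic open--closed maps from the appropriate symplectic cohomology groups into the negative cyclic homology of the wrapped categories; the images of the respective fundamental classes would then serve as the absolute smooth Calabi--Yau structures. Nondegeneracy should reduce, via a local-to-global argument on a Weinstein handle decomposition, to the cotangent bundle case, where it is known.

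For the relative structure itself, the key geometric input is that near $\sigma$ the Orlov functor is modelled on the embedding of a collar $\sigma\times T^*[0,\epsilon)\into M$ sending a Lagrangian $L\subset\sigma$ to its linking disk $L\times[0,\epsilon)$. Under the open--closed formalism this collar places the two absolute Calabi--Yau classes into a relative pair: I would promote the two absolute maps to a single relative cyclic open--closed map whose target is the cofiber of $\imath_\sigma$ on Hochschild chains and whose source is a relative symplectic cohomology of the pair $(M,\sigma)$, and then define the relative Calabi--Yau class as the image of the corresponding relative fundamental class. Nondegeneracy would follow, via the five lemma applied to the cofiber sequence on Hochschild homology induced by $\imath_\sigma$, from the absolute nondegeneracy established in the first step.

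The main obstacle is the chain-level construction of the relative cyclic open--closed map. The relevant moduli spaces parametrize holomorphic curves whose boundary components lie partly on Lagrangians in $M$ and partly on Lagrangians in $\sigma$, with degenerations into the collar that must be controlled across codimension-one boundary strata. This is technically analogous to, and should generalize, the constructions in the Lefschetz fibration case \cite{Seidel2012,KatzarkovPanditSpaide2017}, but it requires adapting those arguments to work without a global fibration. A potentially cleaner alternative is to exploit the expected Koszul duality between cocores in $\W(M,\sigma)$ and linking disks in the image of $\imath_\sigma$; this would reduce the relative structure to an explicit pairing between generators, at the price of having to verify \emph{a posteriori} that this algebraic pairing agrees with the one coming from the cyclic open--closed construction.
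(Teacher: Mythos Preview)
The statement you are attempting to prove is labelled a \emph{Conjecture} in the paper, and the paper offers no proof of it. The author attributes the statement to Ganatra and presents it as an expectation, not a result. So there is no ``paper's own proof'' to compare your proposal against.

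What you have written is therefore not an alternative proof but a research outline for an open problem. As such it is reasonable: constructing absolute smooth Calabi--Yau structures on $\W(\sigma)$ and $\W(M,\sigma)$ via cyclic open--closed maps is exactly the approach taken in Ganatra's work on the absolute case, and promoting this to a relative cyclic open--closed map landing in the cofiber of $\imath_\sigma$ on Hochschild chains is the natural strategy. You have also correctly identified the genuine obstacle, namely the chain-level construction of that relative map and the control of its degenerations. This is precisely the piece of work that has not been carried out and is why the statement remains a conjecture. Your alternative via Koszul duality between cocores and linking disks is also a plausible line, though the compatibility check you flag at the end is not a minor afterthought: it is essentially the same problem in different clothing, since one still needs to know that the algebraic pairing lifts to a cyclic structure compatible with the geometric one.

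In short: your proposal is a sensible plan of attack, but it is not a proof, and the paper does not claim one either.
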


The assumption that $(M,\sigma)$ is a Weinstein pair, meaning in particular that there are no additional stops lurking in the background, is crucial. Indeed, Brav and Dyckerhoff show in \cite{BravDyckerhoff2019} that when a spherical functor has a relative smooth Calabi--Yau structure, the spherical twist depends only on the target category, but one can modify Example \ref{eg:punctured A^2} to make the two stops have different twists.

\subsection{The Viterbo map}
Consider first the case of a Weinstein subdomain $M^\mathrm{in}\subset\bar M$, meaning that $M^\mathrm{in}$ and the cobordism $\bar M\setminus M^\mathrm{in}$ are both Weinstein. It follows from the cosheaf property that the Viterbo transfer map
\[
\mathcal V\colon\W(\bar M)\to\Perf\,\W(\bar M^\mathrm{in})
\]
is a localization, and that $\ker\mathcal V$ is (split-) generated by the co-cores of the cobordism \cite{GanatraPardonShende2018}. We would like to understand to what extent this remains true if we drop the assumption that $\bar M\setminus M^\mathrm{in}$ is Weinstein.

\begin{question}\label{qn:V a localization?}
	Suppose $M^\mathrm{in}$ and $\bar M$ are individually Weinstein. Must $\mathcal V$ be a localization (up to summands)?
\end{question}

I do not know how to answer this question, but I expect the answer is no. Instead, we will prove a partial result.

\begin{thm}\label{thm:Viterbo is epic}
	Let $M^\mathrm{in}\subset\bar M$ be a Liouville subdomain, and suppose both $\bar M$ and the Liouville completion $\bar M^\mathrm{in}$ satisfy stop removal. Then the Viterbo transfer map
	\[
	\mathcal V\colon\W(\bar M)\to\Perf\,\W(\bar M^\mathrm{in})
	\]
	is a homological epimorphism.
	
	In particular, the image of $\mathcal V$ split-generates $\W(\bar M^\mathrm{in})$.
\end{thm}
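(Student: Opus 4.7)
Write $\sigma := \partial_\infty M^\mathrm{in}$, viewed as a Liouville hypersurface in $\partial_\infty \bar M$, so that $\W(\sigma) \simeq \W(\bar M^\mathrm{in})$. Following \cite{GanatraPardonShende2018}, the Viterbo map is defined through the Orlov functor $\iota : \W(\bar M^\mathrm{in}) \to \W(\bar M, \sigma)$ and the stop removal functor $\pi : \W(\bar M, \sigma) \to \W(\bar M)$. The stop removal hypothesis on $\bar M$ ensures that $\pi$ is a Verdier quotient by the thick subcategory generated by $\iota(\W(\bar M^\mathrm{in}))$; in particular $\pi$ itself is a homological epimorphism.

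The ``in particular'' clause of the theorem is the easier half. Every Lagrangian $L' \subset \bar M^\mathrm{in}$ may be regarded as a Lagrangian in $\bar M$ under the Liouville embedding, and an unwinding of the GPS definition identifies $\mathcal V(L')$ with the representable $\W(\bar M^\mathrm{in})$-module of $L'$. Consequently the image of $\mathcal V$ contains the Yoneda image of $\W(\bar M^\mathrm{in})$ and therefore split-generates.

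For the homological epimorphism property, I plan to use the coend characterization: $\mathcal V$ is a homological epimorphism iff for every pair $L_1', L_2' \in \W(\bar M^\mathrm{in})$ the natural composition map
\[
\int^{L \in \W(\bar M)} \operatorname{hom}_{\W(\bar M^\mathrm{in})}(L_1', \mathcal V L) \otimes \operatorname{hom}_{\W(\bar M^\mathrm{in})}(\mathcal V L, L_2') \longrightarrow \operatorname{hom}_{\W(\bar M^\mathrm{in})}(L_1', L_2')
\]
is a quasi-isomorphism. Using the GPS formula $\mathcal V L(L') \simeq \operatorname{hom}_{\W(\bar M, \sigma)}(\iota L', \tilde L)$ for a suitable lift $\tilde L \in \W(\bar M, \sigma)$ of $L$, the coend converts into a Floer-theoretic bar complex in $\W(\bar M, \sigma)$ summed over lifts of objects of $\W(\bar M)$. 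The stop removal hypothesis on $\bar M$ implies that such lifts split-generate a subcategory containing $\iota L_2'$, so the bar complex serves as a resolution whose total cohomology computes $\operatorname{hom}_{\W(\bar M, \sigma)}(\iota L_1', \iota L_2')$. The stop removal hypothesis on $\bar M^\mathrm{in}$ then identifies the latter with $\operatorname{hom}_{\W(\bar M^\mathrm{in})}(L_1', L_2')$, via the full faithfulness of the Orlov functor in the partially wrapped setting.

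The main obstacle is this bar-complex identification. In the Weinstein cobordism case treated in \cite{GanatraPardonShende2018}, generation by co-cores of the cobordism provides an explicit resolution showing that $\mathcal V$ is a genuine Verdier localization, which is strictly stronger than what we claim. Without the cobordism-Weinstein hypothesis, the relevant resolution of $\iota L_2'$ is only implicit; the analysis must instead proceed abstractly using both stop-removal hypotheses together, essentially by showing that the bimodule $\mathcal V \otimes^L_{\W(\bar M)} \mathcal V$ admits the same ``universal'' presentation as the diagonal $\W(\bar M^\mathrm{in})$-bimodule, where the universal property is captured by the combined Orlov/stop-removal setup.
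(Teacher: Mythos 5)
The proposal correctly identifies the target condition — that $\Gamma^\dagger(\mathcal V)\otimes_{\W(\bar M)}\Gamma(\mathcal V)\to\Delta_{\W(\bar M^\mathrm{in})}$ be a quasi-isomorphism (equivalently, your coend form) — which is condition (3) of Definition~\ref{def:homological epi}. But beyond that, your argument is a framework without a proof, and you essentially say so yourself in the last paragraph: the "bar-complex identification," the part that would actually verify the condition, is left as an obstacle to be addressed "abstractly" by an unstated "universal presentation." That is exactly where the content of the theorem lives, and it is missing.

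There are also two more concrete problems. First, your description of the GPS Viterbo construction is off: $\mathcal V$ is not built from a single Orlov functor $\iota:\W(\bar M^\mathrm{in})\to\W(\bar M,\sigma)$ together with a stop removal $\pi:\W(\bar M,\sigma)\to\W(\bar M)$, nor is $\sigma:=\partial_\infty M^\mathrm{in}$ a Liouville hypersurface of $\partial_\infty\bar M$. The actual construction passes through the Viterbo sector $V_{M^\mathrm{in}}\subset\bar M\times\C$ with \emph{two} stops $\sigma_0,\sigma_1$, and $\mathcal V=(\Perf\,\imath_{\sigma_1}[1])^{-1}\circ\imath_{\sigma_0}$. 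Second, your "in particular" argument appeals to regarding a Lagrangian $L'\subset\bar M^\mathrm{in}$ as a Lagrangian in $\bar M$; this only works for compact $L'$, and objects of $\W(\bar M^\mathrm{in})$ need not be compact. In fact the split-generation claim needs no separate geometric argument: once homological epimorphism is proved, it follows formally from Lemma~\ref{lem:epis are surjective}.

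The paper's mechanism for the missing step is geometric and quite different from what you sketch. One first uses the gluing formulas of Section~\ref{sec:gluing} (Proposition~\ref{prop:stopped gluing formula}, Corollary~\ref{cor:directed gluing formula}) to identify the wrapped Fukaya category of the doubled Viterbo sector $V^{(2)}_{M^\mathrm{in}}=V_{M^\mathrm{in}}\dglu{\sigma_0}{\sigma_0}V_{M^\mathrm{in}}$ with the semiorthogonal gluing of two copies of $\W(\bar M^\mathrm{in})$ along the bimodule $\Gamma^\dagger(\mathcal V)\otimes_{\W(\bar M)}\Gamma(\mathcal V)$. Then one introduces a tripled sector $V^{(3)}_{M^\mathrm{in}}$ and Proposition~\ref{prop:geometric interpretation of V in the middle} to interpret the tautological bimodule map \eqref{V in the middle} geometrically as a pushforward followed by a stop removal. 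The decisive input — the "key geometric ingredient" singled out in the paper's proof — is that $V^{(3)}_{M^\mathrm{in}}\cup\sigma_1(V_{M^\mathrm{in}})$ is deformation equivalent to $V^{(2)}_{M^\mathrm{in}}$, because both come from the same Liouville pair. That deformation equivalence, combined with stop removal, forces the tautological bimodule map to be a quasi-isomorphism via a short diagram chase. Nothing in your proposal plays the role of this geometric identification, and without it the "universal presentation" you allude to cannot be made to converge.
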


Here, the assertion that $\mathcal V$ is a \emph{homological epimorphism} means that its extension
\[
\mathcal V_!\colon\mathrm{Mod}\text{-}\W(\bar M)\to\mathrm{Mod}\text{-}\W(\bar M^\mathrm{in})
\]
is a localization. Such functors do not enjoy all of the useful properties that genuine localizations have, but the difference is reasonably well understood. Indeed, a homological epimorphism becomes a localization as soon as $\mathcal V_!$ has enough perfect modules in its kernel \cite{Neeman1996}. To prove split-generation it would of course be simpler to transfer the open-closed map and apply the generation criterion \cite{Abouzaid2010}, which with modern technology is a reasonably quick and worthwhile exercise. Nonetheless, this is to my knowledge the first written proof of the fact.

We close by recalling the \emph{regular Lagrangian conjecture} \cite{EliashbergGanatraLazarev2018}, which posits that any exact Lagrangian $L$ in a Weinstein manifold $M$ can be made part of the skeleton. If true, this would supply an affirmative answer to Question \ref{qn:V a localization?} in the case where $M^\mathrm{in}$ is a Weinstein neighborhood of $L$. Given that, it seems reasonable to view Question \ref{qn:V a localization?} for Lagrangians as a weakening of the regular Lagrangian conjecture, which for some class of Lagrangians might be tractably attacked by studying $\ker\mathcal V$. More optimistically, one could ask when $\ker\mathcal V_!$ is generated (under filtered colimits) by a single Lagrangian disk $D$. If this happens, then $\bar M\setminus D$ would be Floer theoretically equivalent to $T^*L$.

\subsection{Outline of the paper}
We begin Section \ref{ch:viterbo} with a review of partially wrapped Floer theory, and proceed to prove new isotopy invariance properties. This allows us in Section \ref{sec:abouzaid--seidel} to establish the equivalence between the Abouzaid--Seidel and sectorial constructions of the Viterbo functor.

Section \ref{ch:subdomains and bimodules} concerns various gluing operations on Liouville sectors and their algebraic interpretation. The resulting formulas are equivalent to the gluing formulas in \cite{GanatraPardonShende2018} but simpler for our purposes. We use these to geometrically reinterpret the conclusion of Theorem \ref{thm:Viterbo is epic}, which we prove by observing that two sectors are isotopic.

The final section \ref{ch:spherical} begins with a discussion of spherical functors and their characterization in terms of spherical swaps. We then prove Theorem \ref{thm:swappable stops give spherical Orlov functors} by verifying that a swap of a stop induces a swap of its Orlov functor.

\subsection*{Acknowledgments}
Thank you to Sasha Efimov for patiently explaining homological epimorphisms, Sheel Ganatra for telling me about spherical functors and for ideas about pushforward functors, and Emmy Murphy for suggesting there should be such a thing as Viterbo transfer to ``substops''. The section on swappable stops was inspired by a talk by Guogang Liu at the Princeton/IAS symplectic geometry seminar. I'd also like to thank Mohammed Abouzaid, Denis Auroux, Oleg Lazarev, Vivek Shende, and Paul Seidel for helpful conversations.

This project started while I was a member at the Institute for Advanced Study and was completed with the support of the Simons Foundation through grant \#385573, the Simons Collaboration on Homological Mirror Symmetry.
 
\section{The sectorial Viterbo map}\label{ch:viterbo}

\subsection{Floer theories on Liouville sectors}

We work in the setting of Liouville manifolds and Liouville sectors. A \emph{Liouville manifold} is an exact symplectic manifold $(\bar M,\lambda)$ with complete, finite type, convex Liouville vector field $Z$, meaning that all negative trajectories of $Z$ become trapped in a compact set. Writing $\H=\C_{\Re(z)\le1}$ with the radial Liouville form $\frac12(xdy-ydx)$, a \emph{stop} in $\bar M$ with fiber a Liouville manifold $\bar F$ is a proper embedding
\[
\sigma\colon\bar F\times\H\to\bar M
\]
which intertwines the Liouville forms up to a compactly supported exact 1-form. Its \emph{divisor} $D_\sigma$ is the submanifold $\sigma(\bar F\times\{0\})$. A \emph{Liouville sector} is a Liouville manifold with boundary of the form $\bar M\setminus\sigma(\bar F\times\C_{\Re<0})$, which we will abbreviate and write $\bar M\setminus\sigma$, or just $M$ when the stop is understood. For $M$ a Liouville sector, the contact manifold at infinity $\partial_\infty M$ has convex boundary and admits $\partial_\infty D_\sigma$ as a dividing set. In this case, the positive and negative boundaries $\partial^+\partial_\infty M$ and $\partial^-\partial_\infty M$ refer to $\sigma(\bar F\times i\R_{\ge0})$ and $\sigma(\bar F\times i\R_{\le0})$, respectively.

Ganatra, Pardon, and Shende \cite{GanatraPardonShende2017} show that the space of Liouville sectors is homotopy equivalent to the space of pairs $(\bar M,\sigma)$, and the fiber can be recovered as the symplectic reduction of the boundary. Note that they use a slightly more general definition of Liouville sector and refer to our sectors as Liouville sectors with exact boundary.

More practically, they also show that the space of Liouville manifolds $\bar M$ equipped with a Liouville hypersurface of $\partial_\infty\bar M$, the contact boundary at infinity, is homotopy equivalent to the space of Liouville sectors. Such objects are called \emph{Liouville pairs} or \emph{sutured Liouville manifolds}.

We will want to use several flavors of Floer theory on Liouville sectors, and to that end we define the types of Floer data we will wish to use. The definitions are routine but long, and a reader who is willing to be cavalier about compactness may safely skip them.

\begin{defn}\label{defn:Floer data admissibility}
	Let $M$ be a Liouville sector. A Hamiltonian $H$ on $M$ has \emph{growth rate $\rho$} if, outside of a compact set, it satisfies
	\[
	dH(Z)=\rho H.
	\]
	In particular, compactly supported Hamiltonians have every growth rate. Hamiltonians of growth rate $1$ or $2$ are called \emph{linear} or \emph{quadratic}, respectively. A \emph{symplectization coordinate} is an exhausting, globally linear Hamiltonian which might be non-smooth. A \emph{transverse Hamiltonian} is one which is either compactly supported or linear and nonvanishing outside a compact set.
	
	Let $\mathbf r$ be a family of symplectization coordinates over some base $\mathcal C$, and let $K\subset M$ be a compact subset. We will be interested in families of pairs of Hamiltonians over $\mathcal C$ of the form $\mathbf H=(\mathbf H^0,\mathbf H^1,\mathbf H^2)$, where $\mathbf H^i$ consists of Hamiltonians of growth rate $i$. Outside $K$, we require that these are given by
	\[
	\mathbf H^i(c)=i\cdot\bigl(\mathbf r(c)\bigr)^i\qquad c\in\mathcal C.
	\]
	
	Now suppose $\mathcal C$ is the universal curve over some compactified space of domains $\mathcal B$. In particular, $\mathcal C$ is fibered by nodal Riemann surfaces $\Sigma_b$ for $b\in\mathcal B$, and we require $d\mathbf r|_{\Sigma_b}$ to vanish on normal vectors to $\partial\Sigma_b$. We consider families of $1$-forms $\boldsymbol\beta^i\in\Gamma(\mathcal B,\Omega^1\Sigma_b)$ for $i=0,1,2$, requiring that these are appropriately sub-closed and compatible with $\mathbf r$. Sub-closedness means that 
	\begin{equation}\label{subclosedness for linear term}
	d\left(\boldsymbol\beta^1(b)\right)\le0\qquad\text{and}\qquad \boldsymbol\beta^1(b)|_{\partial\Sigma_b}=0
	\end{equation}
	outside the support of $\boldsymbol\beta^2$ and
	\[
	d\left(\boldsymbol\beta^2(b)\right)\le0\qquad\text{and}\qquad \boldsymbol\beta^2(b)|_{\partial\Sigma_b}=0
	\]
	everywhere, with the stronger inequality
	\[
	d\left(\boldsymbol\beta^2(b)\right)\le k\cdot dvol<0
	\]
	for some global $k$ on the loci where \eqref{subclosedness for linear term} fails or where $d\mathbf r|_{T\Sigma_b}\ne0$. Compatibility with $\mathbf r$ additionally means that $\mathbf r$ is independent of $\mathcal C$ on a neighborhood of the locus $\{\boldsymbol\beta^2=0\}$, and that
	\[
	d\mathbf r|_{T\Sigma_b}\wedge\boldsymbol\beta^2(b)\le0
	\]
	as $C^0(M)$-valued $2$-forms on $\Sigma_b$. Finally, we require that for all $b\in\mathcal B$, the interior of the locus $\{\boldsymbol\beta^1(b)=\boldsymbol\beta^2(b)=0\}$ is connected and contains at least one strip-like end.
	
	All almost complex structures are assumed $Z$-invariant outside $K$. On a neighborhood of the closure of $(\mathrm{supp}(\boldsymbol\beta^1)\cup\mathrm{supp}(\boldsymbol\beta^2))\times(M\setminus K)$, we additionally ask that that any $\mathcal C$-family of almost complex structures $\mathbf J$ is of \emph{contact type}, meaning that
	\[
	d\bigl(\mathbf r(c)\bigr)\circ\mathbf J(c)=-a(c)\mathbf r(c)\lambda
	\]
	for some function $a\colon\mathcal C\to\R_{>0}$ which on a neighborhood of $\mathrm{supp}(\boldsymbol\beta^1)\cup\mathrm{supp}(d\mathbf r|_{T\Sigma_b})$ is locally constant.
	
	An \emph{admissible family of Floer data} for $\mathcal C$ is a choice of $\mathbf r$, $K$, $\mathbf H$, $\mathbf J$, and $\boldsymbol\beta=(\boldsymbol\beta^0,\boldsymbol\beta^1,\boldsymbol\beta^2)$ which satisfies all the above conditions and is adapted in the usual sense to some universal choice of strip-like coordinates on $\mathcal C$.
\end{defn}

\begin{lemma}
	Let $\mathcal C$ be as in Definition \ref{defn:Floer data admissibility}, and label $\partial\Sigma_b$ with possibly-moving exact Lagrangian submanifolds of $M$ which are conical outside a compact set $K$. For any admissible Floer datum $\K$ extending $K$ such that $\mathbf H^1|_{\Sigma_b}\boldsymbol\beta^1(b)$ generates the Lagrangian deformation on $\partial\Sigma_b$ for all $b$, the space of solutions to the generalized Floer equation
	\begin{equation}\label{Floer's eqn}
	\mathcal M(\mathcal C):=\bigcup_{b\in\mathcal B}\left\{u\colon\Sigma_b\to M\,\vert\,\left(du-\Sigma(X_{\mathbf H^i}\otimes\boldsymbol\beta^i)\right)^{0,1}_\mathbf J=0\right\}
	\end{equation}
	with the given boundary conditions satisfies Gromov compactness.
\end{lemma}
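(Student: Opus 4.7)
The plan is the standard three-step Gromov compactness argument for Floer-type equations in Liouville sectors: a uniform energy bound, a uniform $C^0$ bound (ruling out escape to infinity and to the sector boundary), and then the classical compactness package, with sphere and disk bubbling excluded by exactness of the symplectic form and the Lagrangian labels.

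First I would establish the energy bound. The geometric energy
\[
E(u)=\tfrac12\int_{\Sigma_b}\left|du-\sum_i X_{\mathbf H^i}\otimes\boldsymbol\beta^i\right|^2_{\mathbf J}
\]
decomposes via the usual identity into a symplectic piece $\int u^*\omega$, exact differential contributions $-d(\mathbf H^i\cdot\boldsymbol\beta^i)$, and error terms $\int\mathbf H^i\, d\boldsymbol\beta^i$. Exactness of the Lagrangian labels turns $\int u^*\omega$ into boundary terms depending only on primitives at the strip-like ends; the condition $\boldsymbol\beta^i|_{\partial\Sigma_b}=0$ kills the boundary contributions of the exact-differential piece, leaving only asymptotic data; and the sub-closedness $d\boldsymbol\beta^i\le0$ together with non-negativity of the linear/quadratic Hamiltonian pieces outside $K$ makes the error contribution non-positive away from $K$, while the part over $K$ is a priori bounded by compactness.

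The main work is the $C^0$ bound outside $K$. Along the symplectization direction, I would apply the maximum principle to $\mathbf r\circ u$, where $\mathbf r$ is the symplectization coordinate from the Floer datum. Combining the Floer equation \eqref{Floer's eqn} with the contact-type condition $d\mathbf r\circ\mathbf J=-a\mathbf r\lambda$ yields a differential inequality for $\mathbf r\circ u$ whose sign is controlled exactly by the terms built into Definition \ref{defn:Floer data admissibility}: outside $\mathrm{supp}(\boldsymbol\beta^2)$, the inequality $d\boldsymbol\beta^1\le0$ (with $\boldsymbol\beta^1|_{\partial\Sigma_b}=0$) gives the usual linear maximum principle, while on $\mathrm{supp}(\boldsymbol\beta^2)$ the strict bound $d\boldsymbol\beta^2\le k\cdot dvol<0$ combined with the compatibility $d\mathbf r|_{T\Sigma_b}\wedge\boldsymbol\beta^2\le0$ dominates any cross-terms. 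This rules out interior maxima of $\mathbf r\circ u$ outside $K$ and bounds it by its values on $\partial K$ and at the strip-like ends. For the sector direction, I would invoke the standard fact that near $\partial M$ a Liouville sector admits a symplectic projection to $\H$ under which $\mathbf J$-holomorphic curves descend to holomorphic maps; since the Lagrangian labels are conical outside $K$, their images in $\H$ avoid an open neighborhood of $\{\Re=0\}$, and an open mapping argument then prevents $u$ from reaching $\partial M$.

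The main obstacle will be executing the maximum principle calculation cleanly in the transition region where $\boldsymbol\beta^2$ is switched on but $\boldsymbol\beta^1$ and $\boldsymbol\beta^0$ may also contribute; the compatibility conditions of Definition \ref{defn:Floer data admissibility} are engineered precisely so that the various signs conspire, but checking every term — including those coming from $\boldsymbol\beta^0$ and from the cross-term $d\mathbf r\wedge\boldsymbol\beta^2$ — requires patience. Once both bounds are in hand, the remaining steps (breaking off at nodes, extracting limits at strip-like ends, and verifying no bubbling in the exact setting) are entirely standard.
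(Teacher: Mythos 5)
Your plan of energy bound plus $C^0$ bound plus standard bubbling is the right skeleton, but the $C^0$ step is where the proposal does not match the structure of the hypotheses, and this is a genuine gap rather than a stylistic difference.

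You propose to run the maximum principle for $\mathbf r\circ u$ on all of $\Sigma_b\setminus u^{-1}(K)$ using the contact-type identity $d\mathbf r\circ\mathbf J=-a\,\mathbf r\,\lambda$. But in Definition \ref{defn:Floer data admissibility} the almost complex structure is only required to be of contact type on a neighborhood of $(\mathrm{supp}(\boldsymbol\beta^1)\cup\mathrm{supp}(\boldsymbol\beta^2))\times(M\setminus K)$; on the complementary locus $V=\{\boldsymbol\beta^1(b)=\boldsymbol\beta^2(b)=0\}$ it is merely $Z$-invariant. That weaker condition does not supply the maximum principle. Nor is this a removable inconvenience: the definition guarantees $V$ is nonempty (it must contain a strip-like end), and on $V$ one typically \emph{needs} to use almost complex structures that make the projection $\sigma(\bar F\times\C_{0\le\Re\le1})\to\C_{0\le\Re\le1}$ holomorphic (this is how $\O(M)$ is set up), a requirement that is incompatible with the contact-type condition near $\partial M$. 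So you cannot simply strengthen the hypothesis and keep the rest of the framework working. Notice also that your own fallback plan for preventing escape across $\partial M$ — the open-mapping argument for the projection to $\H$ — is usable precisely on the region where your maximum principle is not, and vice versa; your proposal invokes both properties of $\mathbf J$ simultaneously without acknowledging this tension.

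The paper's argument handles this by covering $\mathcal C$ by two opens: $U$, the interior of the locus where $\mathbf J$ is contact type outside $K$, and $V$ as above. On $U$ one has a pointwise maximum principle (above some level $\mathbf r=N$) pushing any large interior maximum of $\mathbf r\circ u$ into $\overline V$; on $V$ one does \emph{not} use the maximum principle at all, but instead the monotonicity principle together with the $C^0$ estimates on strip-like ends from Ganatra--Pardon--Shende, giving a bound on $\mathbf r\circ u$ there in terms of the Floer data and the actions of the asymptotic chords. To repair your proposal you would need to replace your single-region maximum principle by this two-regime argument (or prove a comparably robust a priori estimate valid on $V$ with only $Z$-invariant $\mathbf J$). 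The rest of your outline — the Stokes-type energy decomposition and the exclusion of bubbling by exactness — is standard and in line with what one would write out if expanding the paper's sketch.
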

\begin{proof}[Sketch of proof]
	The main step is to show that there is a compact subset of $M$ which contains the image of all elements of $\mathcal M(\mathcal C)$. For this, we cover $\mathcal C$ by open sets $U$ and $V$, where $U$ is the interior of the locus on which $\mathbf J$ is of contact type outside $K$, and $V$ is the interior of the locus $\{\boldsymbol\beta^1(b)=\boldsymbol\beta^2(b)=0\}$.
	
	On $U$, we have a pointwise maximum principle above $\mathbf r=N$ for some large $N$ \cite[expanded version]{Sylvan2019}, which means any global maximum of $\mathrm r\circ u$ which is larger than $N$ must live on the closure of the region on which $J$ is not of contact type. In particular, it must live in $V$.
	
	On the other hand, in $V$ we have a monotonicity principle as well $C^0$ estimates on $\mathbf r\circ u$ on the strip-like part \cite{GanatraPardonShende2017}, which provides a global bound on $\mathbf r\circ u$ depending only on the Floer data and the actions of the input and output chords.
\end{proof}

In the sequel, all choices of Floer data will be assumed admissible, and if $\partial\mathcal B$ is indexed by products of lower-dimensional moduli spaces, we will further assume the Floer data on $\mathcal C$ are compatible with this boundary decomposition.

Fix a ground field $\mathbb k$, which will be the coefficient field for all our Floer complexes. If $M$ is a Liouville sector, then its \emph{wrapped Fukaya category} $\W(M)$ is the Fukaya category encoding the Floer theory of exact Lagrangians in $M$ which are conical at infinity. More precisely, it is the Fukaya $A_\infty$-category whose
\begin{itemize}
	\item objects are embedded exact Lagrangian branes in $M$ which are conical at infinity, meaning they are equipped with gradings if $M$ is graded and with pin structures if you want signs, and whose
	\item morphism complexes are the Floer cochain complexes $CF^\bullet_\sigma(L_0,L_1;H)$ taken in $\bar M$, generated by those Hamiltonian chords whose linking number with $D_\sigma$ is zero, for a positive quadratic Hamiltonian $H$ whose Hamiltonian vector field is tangent to $D_\sigma$. Here, we require the almost complex structure to make $D_\sigma$ into an almost complex submanifold.
\end{itemize}

While this definition is easiest to state, it will be convenient to move between various definitions of $\mathcal W(M)$. In particular, we will want to use the definition from \cite{GanatraPardonShende2017}, which is constructed as a localized category $\W_\mathrm{loc}(M)=\O(M)/\mathcal C(M)$. When the distinction is important, we will refer to the first presentation as $\W_\mathrm{quad}(M)$.

For this, $\O$ is defined to be the $A_\infty$-category whose
\begin{itemize}
	\item objects are pairs $(L,n)$, where $L\in\Ob(\W_\mathrm{quad}(M))$ and $n\in\Z_{\ge0}$, and whose
	\item morphism complexes are given by
	\[
	\hom_\O((L_1,n_1),(L_2,n_2))=\begin{cases}
	k\cdot\mathbf{1}_{(L,n)}&n_1=n_2=n\text{ and }L_1=L_2=L\\
	CF^\bullet(\phi_{n_1}L_1,\phi_{n_2}L_2)&n_1>n_2\\
	0&\text{otherwise,}
	\end{cases}
	\]
	where $\mathbf{1}_{(L,n)}$ is a strict unit and $\{\phi_nL\}$ is a cofinal sequence of increasing positive wrappings of $L$. In this case, the almost complex structures are required to make the projection \[\pi\colon\sigma(\bar F\times\C_{0\le\Re\le1})\to\C_{0\le\Re\le1}\] holomorphic, which prevents holomorphic curves from touching $\partial M$.
\end{itemize}
The full subcategory $\mathcal C(M)\subset\Perf\,\O(M)$ consists of all cones of \emph{continuation elements}
\[
e_{(L,n)}\in\hom^0_\O((L,n+1),(L,n)),
\]
which are the Floer cocycles obtained by counting holomorphic disks for increasing wrapping in $M$.

\subsection{Continuation functors}

To justify this use of multiple definitions, we need to know that the resulting categories are quasi-equivalent in a sufficiently canonical way. We will prove this in Proposition \ref{prop:WFC are all the same} below, but first we will need a supply of functors. To this end, fix a collection of branes $\mathcal L$, and let $\K_1$ and $\K_2$ be admissible families for Floer data on the associahedra making $\mathcal L$ into object sets of some Fukaya categories $\F(M;\K_1)$ and $\F(M;\K_2)$. For a pair of objects $L,L'\in\mathcal L$, let $H_i^{L,L'}$ be such that the Floer equation for $\hom_{\F(M;\K_2)}(L,L')$ is
\[
\left(du-X_{H_i^{L,L'}}\otimes dt\right)^{0,1}=0
\]

\begin{lemma}\label{lem:continuation functors}
	Suppose that for all $L,L'$, either $H_2^{L,L'}$ is positive quadratic or the inhomogeneous terms satisfy
	\[
	H_1^{L,L'}\le H_2^{L,L'}
	\]
	outside a fixed compact subset of $M$. Then there is a \emph{``continuation''} $A_\infty$-functor
	\[
	F\colon\F(M;\K_1)\to\F(M;\K_2)
	\]
	which is canonical up to homotopy. Moreover, both $F$ and the homotopy are given by counts of perturbed holomorphic curves for admissible Floer data over certain spaces of domains.
\end{lemma}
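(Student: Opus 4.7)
The plan is to define $F$ as a standard continuation-style $A_\infty$-functor by counting pseudoholomorphic polygons whose Floer data interpolate between those of $\K_1$ near the inputs and those of $\K_2$ near the output, with the higher components parameterized by the multiplihedra $J_n$ (equivalently, by $(n{+}1)$-pointed disks with a moving seam separating the input and output regions). For each $n$, I would build an admissible family of Floer data over the universal curve of $J_n$ and define the $n$-th component $F^n$ of the functor by counting rigid elements of the resulting moduli space $\mathcal M(\mathcal C_n)$ cut out by \eqref{Floer's eqn}. The $A_\infty$-functor equations follow from the codimension-one boundary strata of $J_n$ in the usual way, provided the interpolating data restrict on each stratum to products of the $A_\infty$-structure data of $\F(M;\K_1)$, $\F(M;\K_2)$, and lower-arity continuation data.

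The main task is to construct these interpolating data so that Definition \ref{defn:Floer data admissibility} is satisfied and the preceding Gromov compactness lemma applies. The two alternative hypotheses correspond to two ways of transporting between the linear and quadratic regimes. In the monotone linear case $H_1^{L,L'}\le H_2^{L,L'}$, I would take $\boldsymbol\beta^1$ of the form $f(s,t)\,dt$ on each strip-like end, with $\partial_s f\ge 0$ as one moves from input to output and with $\boldsymbol\beta^2$ trivial; then $d\boldsymbol\beta^1\le 0$ is realized precisely by the inequality in the hypothesis, and \eqref{subclosedness for linear term} holds globally. In the positive quadratic case, the transition is performed instead by $\boldsymbol\beta^2$, which grows monotonically from $0$ near the inputs (where $\mathbf H^1\boldsymbol\beta^1$ generates the wrappings of $\K_1$) to its standard value near the output; here the strict sub-closedness of $\boldsymbol\beta^2$ is unobstructed because the quadratic term dominates in the symplectization. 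The contact-type condition on $\mathbf J$ and its compatibility with $\mathbf r$ can be met by choosing $\mathbf r$ independent of the domain direction and taking $\mathbf J$ of standard contact type outside the compact set $K$.

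Uniqueness up to homotopy follows from the same construction with an additional $[0,1]$-parameter: given two choices $F_0$ and $F_1$ of continuation datum satisfying the hypothesis, one constructs an admissible family over $[0,1]\times J_n$ restricting to the chosen data at the endpoints, and counts of rigid curves assemble into an $A_\infty$-homotopy $F_0\simeq F_1$. The higher coherences come from iterating this with simplices in place of $[0,1]$. The main obstacle I anticipate is the bookkeeping of admissibility on these higher-dimensional parameter spaces: Definition \ref{defn:Floer data admissibility} demands strict sub-closedness on the locus where the linear data or complex structure are being deformed, and the boundary decomposition of the multiplihedra is intricate, so the interpolating data must be built recursively from the bottom up while preserving these global convexity constraints. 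Once the two hypotheses are cleanly separated as above, however, this is a routine but lengthy construction, and the almost complex structures and symplectization coordinates can be left essentially unchanged from those used in the definitions of $\F(M;\K_1)$ and $\F(M;\K_2)$.
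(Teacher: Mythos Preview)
Your proposal is correct and follows essentially the same approach as the paper: construct Floer data on the multiplihedra compatible with the admissibility conditions of Definition~\ref{defn:Floer data admissibility}, and read off the $A_\infty$-functor equations from the boundary strata. The paper's own treatment is in fact just a ``Remark on proof'' pointing to \cite[expanded version]{Sylvan2019} for the construction of Floer data on the multiplihedra in the quadratic case (with the general case declared ``essentially identical'') and to \cite{MauWehrheimWoodward2018} for the coherent orientations, so your outline is already more detailed than what the paper provides.
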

\begin{proof}[Remark on proof]
	In the case where both $H_1^{L,L'}$ and $H_2^{L,L'}$ are positive quadratic, the construction of Floer data on the multiplihedra is given in \cite[expanded version]{Sylvan2019}. The general case is essentially identical.
	
	To pass from Floer data to maps of Floer complexes, one additionally needs coherent orientations of the multiplihedra and their boundary strata. This is treated in \cite{MauWehrheimWoodward2018}.
\end{proof}

\begin{rmk}
It is reasonable to complain that Lemma \ref{lem:continuation functors} is not applicable to $\O$, since $\O$ has an artificial strict unit. To remedy this, note that all $A_\infty$ operations of $\O$ can be viewed as coming from perturbed holomorphic disks by viewing an input of $\mathbf 1_{(L,n)}$ as a boundary marked point and pulling back the Floer data by the forgetful map. Continuation functors from $\O$ to a fully geometric Fukaya category $\F(M;\K)$ can likewise be extended to include these boundary marked points. To do this, one first defines the linear piece $F^1(\mathbf 1_{(L,n)})$ by counting disks with one boundary marked point and one output end, where the Floer datum near the marked point has vanishing $\boldsymbol\beta$. This inductively determines Floer data near the boundary of the higher dimensional multiplihedra, which we extend arbitrarily.

For a detailed account of this type of construction in a similar setting, see \cite[Chapter 10]{Ganatra2013}.
\end{rmk}

\begin{rmk}\label{rmk:operations with extra copies of objects}
A second, less significant objection is that $\O$ is not defined on a collection of branes, but rather pairs $(L,n)$. This can be accommodated with essentially no modification. Specifically, we can view the object $(L,n)$ as an extra copy of the brane $\phi_nL$ and disregard the undefined morphism spaces. Now, to define continuation functors between Fukaya categories with extra copies of branes, one chooses for each object of the source category an object of the target category supported on the same brane. From here the theory carries through verbatim.
\end{rmk}

\begin{prop}\label{prop:WFC are all the same}
	$\W_\mathrm{quad}(M)$ is canonically quasi-equivalent to the partially wrapped Fukaya category $\W_\sigma(\bar M)$ of \cite{Sylvan2019} and to $\W_\mathrm{loc}(M)$.
\end{prop}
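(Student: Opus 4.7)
The plan is to apply Lemma \ref{lem:continuation functors} to construct continuation functors between the three categories, verify that they descend to or land in the localization $\W_\mathrm{loc}(M)=\O(M)/\mathcal C(M)$ by checking that continuation elements become quasi-isomorphisms, and then compare morphism complexes using the standard homotopy-colimit presentation of $\W_\mathrm{loc}(M)$.

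First I build $\Phi\colon\O(M)\to\W_\mathrm{quad}(M)$ sending $(L,n)\mapsto L$. Since the target uses positive quadratic Hamiltonians, the hypothesis of Lemma \ref{lem:continuation functors} applies unconditionally, producing $\Phi$ after the minor adaptations from Remark \ref{rmk:operations with extra copies of objects} to accommodate strict units and repeated branes. Each continuation element $e_{(L,n)}$ maps to a degree-zero Floer cocycle in $\hom_{\W_\mathrm{quad}(M)}(L,L)$ which by construction represents the Floer-theoretic identity, and is therefore a quasi-isomorphism; by the universal property of the Drinfeld quotient, $\Phi$ descends to $\ol\Phi\colon\W_\mathrm{loc}(M)\to\W_\mathrm{quad}(M)$. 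Essential surjectivity is immediate from $\ol\Phi(L,0)=L$, and full faithfulness reduces, via the standard identification
\[
\hom_{\W_\mathrm{loc}(M)}((L_1,0),(L_2,0))\simeq\hocolim_n CF^\bullet(\phi_n L_1, L_2)
\]
of \cite{GanatraPardonShende2017}, to the claim that this homotopy colimit is quasi-isomorphic to $CF^\bullet_\sigma(L_1,L_2;H_\mathrm{quad})$. I would prove the latter by comparing the telescope to the quadratic complex via a further application of Lemma \ref{lem:continuation functors} together with a standard cofinality argument. The equivalence $\W_\mathrm{quad}(M)\simeq\W_\sigma(\bar M)$ then proceeds along the same lines via a direct application of Lemma \ref{lem:continuation functors}, with any differences between the two definitions amounting to inessential perturbation choices that can be interpolated by admissible one-parameter families, so that the continuation functors produced in both directions compose to the identity up to homotopy.

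The main obstacle is the homotopy-colimit identification used in the first equivalence. Producing a cofinal sequence $\phi_n L$ whose linking numbers with $D_\sigma$ remain zero and whose slopes at infinity diverge requires care near $\partial M$, since wrapping must not cross $D_\sigma$. The tangency of $H_\mathrm{quad}$ to $D_\sigma$ together with the contact-type condition of Definition \ref{defn:Floer data admissibility} ensures that the relevant continuation maps stay confined to the correct part of $M$ and preserve the linking-number grading; but the verification that the telescope is genuinely cofinal, rather than missing some chords at infinity, is the step that genuinely uses the stop-adapted admissibility framework developed in this subsection. Canonicity of each of the quasi-equivalences up to homotopy is then a direct consequence of the canonicity clause of Lemma \ref{lem:continuation functors}.
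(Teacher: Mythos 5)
Your top-level strategy matches the paper's: build a continuation functor from $\O(M)$ into $\W_\mathrm{quad}(M)$ via Lemma~\ref{lem:continuation functors}, observe continuation elements go to isomorphisms so the functor descends to $\W_\mathrm{loc}(M)$, and then reduce the comparison of morphism complexes to a statement about a homotopy colimit. The $\W_\sigma(\bar M)$ comparison is also handled roughly as the paper does, though the paper shortcuts by noting that the Floer data defining $\W_\sigma(\bar M)$ are literally a special case of those defining $\W_\mathrm{quad}(M)$, so only continuation invariance of the latter is needed; you don't need a two-way continuation argument.

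The substantive gap is in the step you flag as the ``main obstacle'' and then resolve by ``a further application of Lemma~\ref{lem:continuation functors} together with a standard cofinality argument.'' That argument, as usually run, compares a telescope over linear Hamiltonians with the quadratic complex when both live on the same Liouville manifold. Here there are two \emph{different} Floer models in play whose compatibility is the whole point: (i) wrapping in the sector $M$, realized by cut-off linear Hamiltonians $H_\ell^M$ that vanish near $\partial M$ and therefore never cross the stop; and (ii) the sub-complex $CF^\bullet_\sigma(L_0,L_1;H_\ell^{\bar M})$ of chords of linking number zero for a \emph{full} linear Hamiltonian $H_\ell^{\bar M}$ on $\bar M$ that does wrap around $D_\sigma$. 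The paper's proof inserts exactly this intermediate model and decomposes the continuation map into three pieces
\[
\hom_\O(L_0,L_1)\to CF^\bullet(L_0,L_1;H_\ell^M)\to CF_\sigma^\bullet(L_0,L_1;H_\ell^{\bar M})\to \hom_{\W_\mathrm{quad}}(L_0,L_1),
\]
handling the first arrow by \cite[Lemma 5.6]{GanatraPardonShende2017} after localization, the last by the standard linear-to-quadratic comparison, and the middle one by an \emph{upper triangularity} argument: for suitable cutoff functions the two complexes have the same generators, and the map is upper triangular with respect to action. Your appeal to ``cofinality'' never names this intermediate model or the upper triangularity argument, and cofinality by itself does not address why one may pass between wrapping Hamiltonians that vanish near the boundary and Hamiltonians that cross $D_\sigma$ but are restricted by linking number. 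That comparison is the genuine content of the proof, and the proposal should isolate it explicitly rather than fold it into the telescope argument.
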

\begin{proof}
	The Floer data used to define $\W_\sigma(\bar M)$ are a special case of the Floer data used to define $\W(M)$, so it is enough to verify invariance of $\W(M)$. This follows from the existence of continuation functors and homotopies between them, along with the observation that the subcomplex generated by those Hamiltonian chords with negative linking number with $D$ is contractible.
	
	For the equivalence between $\W_\mathrm{quad}(M)$ and $\W_\mathrm{loc}(M)$, note that all almost complex structures make $D_\sigma$ into an almost complex submanifold. By positivity of intersections, this means that the continuation functor $F\colon\O(M)\to\W_\mathrm{quad}(\bar M)$ factors through $\W_\mathrm{quad}(M)$, where we view $\O(M)$ as a full subcategory of $\O(\bar M)$. Now the induced functor $\O(M)\to\W_\mathrm{quad}(M)$ sends continuation elements to isomorphisms, so it factors through $\W_\mathrm{loc}(M)$. Write $G\colon\W_\mathrm{loc}(M)\to\W_\mathrm{quad}(M)$ for the induced functor.
	
	To see that $G$ is a quasi-equivalence, we can study just the continuation map. In this case, factor it as
	\begin{equation}\label{continuation map decomposed}
		\hom_\O(L_0,L_1)\to CF^\bullet(L_0,L_1;H_\ell^M)\to CF_\sigma^\bullet(L_0,L_1;H_\ell^{\bar M})\to \hom_{\W_\mathrm{quad}}(L_0,L_1).
	\end{equation}
	Here, $H_\ell^{\bar M}$ is a linear Hamiltonian on $\bar M$ whose Hamiltonian vector field is transverse to the rays $\sigma(F\times e^{i\theta}\R_{>0})$ and winds counterclockwise around $D_\sigma$, and $H_\ell^M=\left(\kappa H_\ell^{\bar M}\right)\big\vert_M$ for a cutoff function $\kappa$ which outside a small neighborhood $D_\sigma$ depends only on the angular coordinate of $\H$ and vanishes in a neighborhood of $\partial M$. As with $\mathcal W_\mathrm{quad}$, the notation $CF^\bullet_\sigma$ means that we take the subcomplex generated by chords whose linking number with $D_\sigma$ is zero.
	
	Now \eqref{continuation map decomposed} is compatible with increasing the linear wrapping, and taking the direct limit gives
	\[
		\hom_\O(L_0,L_1)\to \hocolim_{H_\ell^M} CF^\bullet(L_0,L_1;H_\ell^M)\to \hocolim_{H_\ell^{\bar M}} CF_\sigma^\bullet(L_0,L_1;H_\ell^{\bar M})\to \hom{\W_\mathrm{quad}}(L_0,L_1).
	\]
	Localizing by continuation elements makes the first arrow into a quasi-isomorphism \cite[Lemma 5.6]{GanatraPardonShende2017}, while the last arrow is a quasi-isomorphism by the usual argument relating linear and quadratic wrapped Floer cohomologies \cite{Ritter2013}. To see that the middle arrow is a quasi-isomorphism, note that for reasonable $\kappa$ the two complexes have the same generators and use an upper triangularity argument.
\end{proof}

\begin{rmk}
	The above proof illustrates the general theme that $\mathcal W_\mathrm{loc}$ is easy to map out of, while $\mathcal W_\mathrm{quad}$ is easy to map into.
\end{rmk}

There is a further compatibility that will be important and deserves proof.

\begin{lemma}\label{lem:acceleration is independent of target Floer data}
	Let $\K_1$ and $\K_2$ be admissible collections of Floer data presenting $\mathcal W_\mathrm{quad}$ as $\F(M;\K_1)$ and $\F(M;\K_2)$, respectively. Then the diagrams
	\[
	\begin{tikzcd}
	\O(M)\ar[r,"F_1"]\ar[dr,"F_2"] & \F(M;\K_1)\ar[d,"F"]\\
	&\F(M;\K_2)
	\end{tikzcd}
	\]
	and
	\[
	\begin{tikzcd}
	\W_\mathrm{loc}(M)\ar[r,"G_1"]\ar[dr,"G_2"] & \F(M;\K_1)\ar[d,"F"]\\
	&\F(M;\K_2)
	\end{tikzcd}
	\]
	commute up to isomorphism in the category $\Fun(\W_\mathrm{loc}(M),\F(M;\K_2))$, where $F_i$ and $F$ are continuation functors and $G_i$ are the quasi-equivalences of Proposition \ref{prop:WFC are all the same}.
\end{lemma}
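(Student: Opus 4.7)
The plan is to establish the upper diagram first and then deduce the lower one via the universal property of the localization. For the upper diagram, the main idea is to exploit the canonicity clause of Lemma \ref{lem:continuation functors}: both $F\circ F_1$ and $F_2$ should be realized as continuation functors $\O(M)\to\F(M;\K_2)$ with matching asymptotic Floer data, after which their homotopy class in $\Fun(\O(M),\F(M;\K_2))$ is determined uniquely.

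Concretely, I would first interpret $F\circ F_1$ as a single continuation functor. The composition is built from counts of perturbed holomorphic disks on pairs of multiplihedra concatenated along the intermediate Fukaya category $\F(M;\K_1)$. By a standard neck-stretching/gluing construction on the multiplihedra, compatible with the coherent orientations of \cite{MauWehrheimWoodward2018}, this count can be re-expressed as a single moduli problem on a new family of domains equipped with a composite admissible Floer datum $\K_{12}$ that restricts to the Floer data of $\O(M)$ near one collection of ends and to $\K_2$ near the other. The resulting functor $\Phi\colon\O(M)\to\F(M;\K_2)$ is then $A_\infty$-homotopic to $F\circ F_1$ via a parametric moduli on a one-parameter family interpolating between the glued and unglued configurations. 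Since $F_2$ is another continuation functor with the same asymptotic data, Lemma \ref{lem:continuation functors} yields a natural quasi-isomorphism $\Phi\simeq F_2$, and hence $F\circ F_1\simeq F_2$ in $\Fun(\O(M),\F(M;\K_2))$.

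For the lower diagram, Proposition \ref{prop:WFC are all the same} constructs $G_i$ as the functor induced by $F_i$ under the localization $\W_\mathrm{loc}(M)=\O(M)/\mathcal C(M)$, using that $F_i$ carries continuation elements to quasi-isomorphisms. Since any $A_\infty$-functor preserves quasi-isomorphisms, $F\circ F_1$ also sends continuation elements to quasi-isomorphisms, and its factorization through $\W_\mathrm{loc}(M)$ is canonically identified with $F\circ G_1$. The natural quasi-isomorphism $F\circ F_1\simeq F_2$ from the upper diagram therefore descends to a natural quasi-isomorphism $F\circ G_1\simeq G_2$ in $\Fun(\W_\mathrm{loc}(M),\F(M;\K_2))$ by the universal property of the localization.

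The hard part is checking that the composite Floer datum $\K_{12}$ and the interpolating family are admissible in the sense of Definition \ref{defn:Floer data admissibility} across the full compactified parameter space. The sub-closedness conditions on the $\boldsymbol\beta^i$, the contact-type condition on $\mathbf J$, and the compatibility $d\mathbf r|_{T\Sigma_b}\wedge\boldsymbol\beta^2(b)\le0$ all need to be arranged uniformly, most delicately on the neck region produced by the gluing, where the Hamiltonian transitions between its value in the $\K_1$-regime and the $\K_2$-regime. This is achieved by choosing the interpolating cutoff functions so that $\mathbf r$ is locally constant in the domain on a neighborhood of $\{\boldsymbol\beta^2=0\}$ throughout the family, at which point the $C^0$ estimate from the compactness lemma following Definition \ref{defn:Floer data admissibility} applies uniformly in the parameter.
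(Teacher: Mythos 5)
Your proposal and the paper take genuinely different routes, and you should be aware that the paper deliberately avoided yours.

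Your plan rests on realizing $F\circ F_1$ as a single continuation functor $\Phi$ by ``a standard neck-stretching/gluing construction on the multiplihedra,'' and then invoking the canonicity clause of Lemma \ref{lem:continuation functors} to get $\Phi\simeq F_2$. This is precisely the route the paper flags in Remark \ref{rmk:canonical homotopies} as requiring substantially more work: showing that a composition of two continuation functors is homotopic to a single continuation functor is nontrivial at the operadic level. The composite is controlled by a ``doubled'' version of the multiplihedra (products of multiplihedra fibered over a multiplihedron), and collapsing this to a single multiplihedron family with admissible Floer data is essentially the content of the quilt machinery of \cite{MauWehrheimWoodward2018,Bottman2017}, not something that follows from routine SFT-style gluing. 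Moreover, Lemma \ref{lem:continuation functors} gives canonicity among continuation functors \emph{as constructed there}; it does not automatically apply to a functor $\Phi$ obtained by a degeneration argument unless you first exhibit $\Phi$'s defining moduli as lying in the same parametrized family, which is again the hard part you are assuming. Your final paragraph correctly locates the delicate admissibility bookkeeping on the neck region, but that is symptomatic: making the interpolating Floer data admissible across the full compactified parameter space \emph{uniformly} is where the difficulty concentrates, and it is not addressed.

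The paper circumvents all of this with a categorical trick. It forms the semiorthogonal gluing $\W^\rightarrow=\langle\O(M),\W_\mathrm{quad}^\mathrm{tot}\rangle$, where $\W_\mathrm{quad}^\mathrm{tot}$ presents $\W_\mathrm{quad}$ using the union of the object sets of $\F(M;\K_1)$ and $\F(M;\K_2)$, quotients by the continuation-element cones $\mathcal C^\rightarrow$, and uses the fact (\cite[Lemma 3.13]{GanatraPardonShende2017}) that each $\F(M;\K_i)\into\W^\rightarrow\to\W^\rightarrow/\mathcal C^\rightarrow$ is a quasi-equivalence $I_i$. The continuation functor $\W^\rightarrow\to\F(M;\K_i)$ then descends through the quotient to produce $P_i$ with $I_i\circ P_i\cong\mathrm{Id}$, so that $F_i\cong P_i\circ I_0$ and the chain $F_2\cong P_2\circ I_0\cong P_2\circ I_1\circ P_1\circ I_0\cong F\circ F_1$ closes the diagram. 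The only geometric input needed is the existence of a single continuation functor for each datum, plus the localization quasi-equivalence, all of which is already in hand; no gluing of multiplihedra is required. Your treatment of the lower diagram via the universal property of localization matches the paper's.

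So: conceptually your plan is in the right direction and would work if carried out rigorously, but it is harder than the argument in the paper and you have left unargued exactly the parts that make it hard. If you want to use this approach, you need to either work in a framework where the gluing of multiplihedra with mixed-growth-rate Hamiltonian perturbations is already established, or carry out that construction yourself; cite \cite{MauWehrheimWoodward2018} and explain why the compactness estimates of Definition \ref{defn:Floer data admissibility} extend to the glued families.
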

\begin{proof}
	It suffices to show that the first diagram commutes up to isomorphism, since the second is obtained from it by localization. For this, follow \cite[Section 10a]{Seidel2008a} and write $\W_\mathrm{quad}^\mathrm{tot}$ for a presentation of $\mathcal W_\mathrm{quad}$ with objects $\Ob(\F(M;\K_1))\amalg\Ob(\F(M;\K_2))$ and Floer data restricting to $\K_i$ on the appropriate piece. Enlarge this further to a semi-orthogonal gluing
	\[
	\W^\rightarrow=\langle\O(M),\W_\mathrm{quad}^\mathrm{tot}\rangle,
	\]
	presented by a choice of admissible Floer data which is positive quadratic on the mixed morphism spaces.
	
	Now, given any object $L$ of $\O(M)$ and any object $L_i$ of $\F(M;\K_i)$ with the same underlying brane as $L$, we have a morphism $e_{L,L_i}\in\hom_{\W^\rightarrow}(L,L_i)$ obtained by counting holomorphic disks with one output. Write $\mathcal C^\rightarrow\subset\Perf\,\W^\rightarrow$ for the full subcategory comprising all cones of morphisms $e_{L,L_i}$. Now the composition
	\[
	I_i\colon\F(M;\K_i)\into\W^\rightarrow\to\W^\rightarrow/\mathcal C^\rightarrow
	\]
	is a quasi-equivalence by \cite[Lemma 3.13]{GanatraPardonShende2017}. On the other hand, we have continuation functors
	\begin{equation}\label{eq:factoring continuation functors}
	\begin{tikzcd}
	\W^\rightarrow\ar[rr]\ar[dr,dotted]&&\F(M;\K_i)\\
	&\W^\rightarrow/\mathcal C^\rightarrow\ar[ur,dotted,"P_i" below]
	\end{tikzcd}
	\end{equation}
	which are the identity on $F(M;\K_i)$ and which factor through $\W^\rightarrow/\mathcal C^\rightarrow$ as indicated. This implies $I_i\circ P_i\cong\mathrm{Id}_{\W^\rightarrow/\mathcal C^\rightarrow}$.
	
	Now, because a continuation functor out of $\W^\rightarrow$ restricts to a continuation functor out of $\O(M)$, \eqref{eq:factoring continuation functors} induces a factorization
	\[
	F_i\cong P_i\circ I_0,
	\]
	where $I_0$ is the composition
	\[
	\O(M)\into\W^\rightarrow\to\W^\rightarrow/\mathcal C^\rightarrow.
	\]
	This implies
	\begin{align*}
	F_2&\cong P_2\circ I_0\\
	&\cong P_2\circ I_1\circ P_1\circ I_0\\
	&\cong F\circ F_1,
	\end{align*}
	where the isomorphism $P_2\circ I_1\cong F$ again comes from restricting \eqref{eq:factoring continuation functors} to $\F(M;\K_1)\subset\W^\rightarrow$.
\end{proof}

\begin{rmk}\label{rmk:canonical homotopies}
	In fact, by working harder one could directly construct a homotopy between a composition of two continuation functors and a single continuation functor. This has been done in the world of quilts \cite{MauWehrheimWoodward2018,Bottman2017}, and the intrepid reader is invited to ``dequiltify'' them. Alternatively, one could directly implement continuation functors in a quilty framework. This would require extending Gao's work \cite{Gao2017} from the fully wrapped to the partially wrapped setting, as well as verifying compactness for pseudoholomorphic quilts deformed by Hamiltonians of mixed growth rate.
\end{rmk}

\subsection{Inclusions of sectors}

We summarize the following from \cite{GanatraPardonShende2017}.
\begin{itemize}
	\item Given a proper inclusion $i\colon M\to N$ of Liouville sectors which is conical near infinity, there is a \emph{pushforward functor}
	\[
	i_*\colon\W(M)\to\W(N)
	\]
	which is given by $i$ on objects and which comes from a fully faithful embedding $\O(M)\to\O(N)$, for some larger version of $\O(N)$. This larger version is not quasi-equivalent to the usual $\O(N)$, but it still localizes to $\W(N)$.
	\item For a chain of proper inclusions, one can arrange that the resulting diagram of pushforward functors is strictly commutative.
	\item For a \emph{trivial inclusion}, meaning that $i(M)$ is deformation equivalent to $N$, the pushforward functor $i_*$ is a quasi-equivalence.
\end{itemize}

We will need the following invariance result.

\begin{prop}\label{prop:isotopic inclusions}
	Let $i_t\colon M\to N$ be a family of inclusions of Liouville sectors as above indexed by $t\in[0,1]$. Then the pushforward functors $(i_0)_*$ and $(i_1)_*$ are isomorphic in the functor category $\Fun(\W(M),\W(N))$.
\end{prop}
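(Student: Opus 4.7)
The plan is to realize both pushforwards in the $\O$-model of Proposition \ref{prop:WFC are all the same}, to construct an explicit $A_\infty$ natural transformation whose components are continuation elements attached to the isotopy $\{i_t(L)\}$, and to propagate the resulting equivalence back to $\W_{\mathrm{loc}}$ using the semi-orthogonal gluing trick already used in Lemma \ref{lem:acceleration is independent of target Floer data}.

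First I would present each pushforward as a composition
\[
(i_t)_*\cong\bigl(\O(M)\xrightarrow{\iota_t}\O_t(N)\twoheadrightarrow\O_t(N)/\mathcal C_t\cong\W_{\mathrm{loc}}(N)\bigr),
\]
where $\iota_t$ is a fully faithful embedding whose Floer data are linear in a neighborhood of $i_t(M)$. Then I would form a combined $\O$-category $\O^{\rightarrow}(N)$ with object set $\mathrm{Ob}\,\O_0(N)\amalg\mathrm{Ob}\,\O_1(N)$ and mixed morphism spaces defined via positive quadratic Floer data, exactly as in the proof of Lemma \ref{lem:acceleration is independent of target Floer data}. Localizing by the cones of continuation elements produces $\O^{\rightarrow}(N)/\mathcal C^{\rightarrow}$, and as in that proof the inclusions $I_t\colon\O_t(N)/\mathcal C_t\hookrightarrow\O^\rightarrow(N)/\mathcal C^\rightarrow$ are quasi-equivalences with continuation quasi-inverses $P_t$.

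Next I would construct an $A_\infty$ natural transformation $\eta\colon\iota_0\Rightarrow\iota_1$ from $\O(M)$ into $\O^{\rightarrow}(N)$. For each $(L,n)\in\mathrm{Ob}\,\O(M)$ the family $t\mapsto i_t(\phi_n L)$ is a Hamiltonian isotopy of conical Lagrangians in $N$, and its leading component $\eta^1_{(L,n)}\in\hom^0_{\O^\rightarrow}(\iota_0(L,n),\iota_1(L,n))$ is the standard continuation element counting holomorphic strips with this moving boundary condition. The higher components $\eta^k$ would be counts of perturbed holomorphic disks over the multiplihedra, equipped with admissible Floer data in the sense of Definition \ref{defn:Floer data admissibility} whose Lagrangian label on the output edge jumps from $\iota_0$ to $\iota_1$ at a prescribed interior point parametrized by the multiplihedron. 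The $A_\infty$ relations follow from the standard codimension-one boundary analysis (using Lemma \ref{lem:continuation functors} and the accommodations of Remark \ref{rmk:operations with extra copies of objects}), and $\eta^1_{(L,n)}$ is a quasi-isomorphism after localization because continuation elements for isotopies of conical Lagrangians always are.

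Composing, I obtain isomorphisms
\[
(i_0)_*\cong P_0\circ I_0\circ\iota_0\cong P_1\circ\eta\circ\iota_0\cong P_1\circ I_1\circ\iota_1\cong(i_1)_*
\]
in $\Fun(\W_{\mathrm{loc}}(M),\W_{\mathrm{loc}}(N))$. The main obstacle is the construction in the third step: producing an admissible family of Floer data over the multiplihedra in which the Lagrangian label on the output arc varies with $i_t$, and in particular verifying Gromov compactness near the locus where the boundary condition changes. The cleanest implementation would dequiltify a continuation transformation in the style of Remark \ref{rmk:canonical homotopies}; alternatively, a direct generalization of the continuation construction of \cite{Sylvan2019} from families of Hamiltonians to families of boundary conditions, combined with the maximum principles sketched after Definition \ref{defn:Floer data admissibility}, should suffice to close the argument.
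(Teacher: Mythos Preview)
Your outline is reasonable and would likely work, but you have correctly flagged the essential gap yourself: building admissible Floer data with moving Lagrangian boundary conditions over the natural-transformation moduli spaces, and verifying compactness there, is exactly the nontrivial step. As written, the proposal defers this to a hoped-for ``dequiltification'' or a generalization of the continuation construction, so it is not yet a proof.

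The paper closes this gap by a different mechanism that you should know, because it replaces the bespoke construction you are contemplating with an invocation of existing machinery. Rather than working in $\O^\rightarrow(N)$, the paper first passes to a quadratic presentation $\W_{\mathrm{quad}}(N)$ via the inclusion--acceleration functor (Lemma \ref{lem:acceleration is independent of target Floer data} shows this is harmless). The key geometric move is then to enlarge $N$ so that $i_t(\partial M)$ stays away from $\partial N$, extend the family $i_t$ to a family of Liouville automorphisms $\bar i_t$ of the completed manifold $\bar N$ fixing the stop $\sigma_N$, and approximate this in $C^0$ by the flow $\phi_t$ of a time-dependent \emph{transverse} Hamiltonian fixing $D_{\sigma_N}$. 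The point is that transverse Hamiltonians are exactly those that appear as the linear part of admissible Floer data with moving boundary conditions (Definition \ref{defn:Floer data admissibility}), so Seidel's construction of natural transformations \cite[Section 10c]{Seidel2008a} applies verbatim to produce an isomorphism $T\colon\mathrm{Id}\to F^{\phi_1}$ in an enlarged quadratic category. Restricting $T$ to the image of $(i_0)_{\mathrm{acc}}$ gives the desired isomorphism $(i_0)_*\cong(i_1)_*$.

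In short: your approach attempts to build the continuation natural transformation directly with moving boundaries in the $\O$-model; the paper instead globalizes the isotopy to an ambient automorphism generated by a transverse Hamiltonian, which lets it invoke Seidel's construction off the shelf in the quadratic model. The paper's route is shorter precisely because it reduces to a known construction rather than redoing the compactness analysis. (A minor terminological point: the moduli spaces parametrizing the higher components of a natural transformation are Stasheff associahedra with a distinguished boundary marked point, not multiplihedra.)
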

\begin{rmk}
	It is easy to show that $(i_0)_*$ and $(i_1)_*$ agree up to some autoequivalence of $\W(M)$. Indeed, by breaking up $[0,1]$ into many small intervals, we can factor $i_t$ through an isotopy of trivial inclusions, so that every functor is a quasi-equivalence. The difficulty is in proving the intuitively obvious fact that the resulting autoequivalence of $\W(M)$ is trivial.
\end{rmk}
\begin{proof}
	The first and most important observation is that we can verify the proposition in any quadratic presentation of $\W(N)$. Indeed, the ``inclusion-acceleration'' functor $i_\mathrm{acc}$ given by the composition
	\begin{equation}\label{inclusion acceleration}
	\O(M)\xhookrightarrow{i}\O(N)\to\W_\mathrm{quad}(N)
	\end{equation}
	induces
	\[
	\W_\mathrm{loc}(M)\xrightarrow{i_*}\W_\mathrm{loc}(N)\to\W_\mathrm{quad}(N)
	\]
	on the subquotient, and by Lemma \ref{lem:acceleration is independent of target Floer data} the $i_\mathrm{acc}$ is preserved by continuation equivalences of $\W_\mathrm{quad}(N)$. In fact, we will prove the stronger fact that $(i_0)_\mathrm{acc}\cong(i_1)_\mathrm{acc}$.
	
	We now perform some preliminary geometric manipulations. By postcomposing with a trivial inclusion, we can enlarge $N$ so that $i_t(\partial M)$ is disjoint from $\partial N$ for all $t$. That done, we can extend $i_t$ to a family of Liouville automorphisms $\bar i_t\colon\bar N\to\bar N$ starting at $i_0=\mathrm{id}_{\bar N}$ which is trivial on the stop $\sigma_N$. Such a family is automatically given by the Hamiltonian flow of a time-dependent linear Hamiltonian which vanishes on $\sigma_N$. By \cite[expanded version Lemma 2.17]{Sylvan2019}, $\bar i_t$ can be approximated rel endpoints in $C^0$ by the flow of a time-dependent transverse Hamiltonian $H_t$ whose flow $\phi_t$ fixes $D_{\sigma_N}$.
	
	Fix a background presentation $\mathcal P$ of $\W_\mathrm{quad}(N)$ given by Floer data $\K$ which receives $(i_0)_\mathrm{acc}$. We will construct a new presentation $\mathcal P^+$ which receives the functors $(i_0)_\mathrm{acc}$ and $(i_1)_\mathrm{acc}$ simultaneously. Our objects set will be $\mathcal L\amalg\mathcal L_1$, where $\mathcal L$ are the objects of $\mathcal P$ and $\mathcal L_1$ are additional copies of each object geometrically supported in $i_1(M)$. The Floer data are taken to agree with $\K$ on $\mathcal L$ and with the pushforward $(\phi_1)_*\K$ on $\mathcal L_1$. As functors to $\mathcal P^+$, $(i_0)_\mathrm{acc}$ is as before, and $(i_1)_\mathrm{acc}$ is the functor determined by the same Floer data pushed forward to land in $\mathcal L_1$.
	
	Now transverse Hamiltonians are precisely those which appear as the linear part of admissible Floer data (with moving boundary conditions), so Seidel's construction of natural transformations \cite[Section 10c]{Seidel2008a} carries through to give an isomorphism of functors
	\[
	T\colon\mathrm{Id}_{\W_\mathrm{quad}^\mathrm{free}(N)}\to F^{\phi_1},
	\]
	where $\W_\mathrm{quad}^\mathrm{free}(N)$ is an enlargement of $\W_\mathrm{quad}(N)$ which containing a copy of $\mathcal P$ for each automorphism $\phi$ of $(\bar N,D_{\sigma_N})$, where the Floer data on that copy are pushed forward by $\phi$. The functor $F^{\phi_1}$ is the one coming from the tautological $\mathrm{Aut}(\bar N,D_{\sigma_N})$ action. Now $\mathcal P^+$ is naturally a full subcategory of $\W_\mathrm{quad}^\mathrm{free}$, and $F^{\phi_1}$ sends objects of $\mathcal L$ supported on the image of $\phi_0$ into $\mathcal L_1$. This means $T$ induces the desired isomorphism
	\[
	(i_0)_*\to F^{\phi_1}\circ(i_0)_*=(i_1)_*.
	\]
\end{proof}

There is a special type of inclusion with respect to which Floer theory is particularly well-behaved.

\begin{defn}\cite{GanatraPardonShende2018}
	An inclusion $i\colon M\to N$ of Liouville sectors is \emph{forward stopped} if there is some compact, codimension zero submanifold $W\subset\overline{\partial_\infty N\setminus\partial_\infty M}$ with corners, together with a Reeb vector field $R$ on $\partial_\infty N$, such that
	\begin{enumerate}
		\item $R$ points out of $\partial_\infty M$ along $\partial^+\partial_\infty M$ and into $\partial_\infty M$ along $\partial^-\partial_\infty M$), and similarly with $M$ replaced by $N$.
		\item $\partial W$ is a union of strata $\partial^+W\cup\partial^-W\cup\partial_{\mathrm{ne}}W$ meeting along corners.
		\item $R$ points weakly into $W$ along $\partial_{\mathrm{ne}}W$.
		\item $\partial^-W=\partial^+\partial_\infty M$.
		\item $\partial^+W$ is a Liouville subdomain of $\partial^+\partial_\infty N$.
	\end{enumerate}
	
	When we need to be more explicit, we will say $i$ is forward stopped \emph{by} $W$ \emph{into} $\sigma$, where $\sigma$ is any stop of $N$ containing $\partial^+W$.
\end{defn}
Intuitively, this just says we can find a Reeb vector field whose trajectories do not exit and reenter $M$. Note that our definition is slightly stricter than the original, since we have required compatibility with a specified decomposition of the convex surfaces $\partial_\infty M$ and $\partial_\infty N$.

\begin{prop}\label{prop:forward stopped inclusions}
	Let $i\colon M\to N$ be forward stopped by $W$. Then
	\begin{enumerate}
		\item\label{item:forward stopped ff} $i_*$ is fully faithful.
		\item\label{item:forward stopped disjoint} If $L\in\W(N)$ is disjoint from $M$ and $\partial_\infty L$ is disjoint from $W$, then there are Floer data presenting $\W(N)$ such that $\hom_{\W(N)}(i_*X,L)=0$ for all $X\in\W(M)$.
	\end{enumerate}
\end{prop}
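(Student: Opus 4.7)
The plan rests on exploiting the Reeb vector field $R$ of the forward-stopping data: once a trajectory of $R$ exits $M$ through $\partial^+\partial_\infty M$ it enters $W$ and is trapped there by the inward-pointing condition on $\partial_{\mathrm{ne}} W$, eventually reaching the stop $\sigma_N$ containing $\partial^+ W$. I would extend $R$ to an admissible transverse Hamiltonian $H$ on $N$ which, restricted to $M$, gives a positive wrapping; iterating its time-$1$ flow then produces a cofinal sequence $\{\phi_n(i_*L)\}$ of wrappings of $i_*L$ in $\W(N)$ for any $L \in \W(M)$, with the crucial property that each $\phi_n(i_*L)$ is contained in $i(M) \cup W$.

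For part (\ref{item:forward stopped ff}), I would compute $\hom_{\W(N)}(i_*L_0, i_*L_1)$ in the $\O$-presentation using this cofinal wrapping. Every $H$-chord between $\phi_{n_0}(i_*L_0)$ and $\phi_{n_1}(i_*L_1)$ either stays inside $M$ throughout or crosses $\partial^+\partial_\infty M$ into $W$; chords of the second type have positive linking number with $D_{\sigma_N}$, and in the presentation $\W(N) = \O(N)/\mathcal C(N)$ they are killed on passing to the quotient by continuation cones. The chords of the first type are precisely those computing $\hom_{\W(M)}(L_0, L_1)$. Combined with a confinement statement for the associated Floer disks, this identifies the two chain complexes, and carrying the same forward-stopped choices coherently over the associahedra upgrades the identification to an $A_\infty$-quasi-isomorphism. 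Part (\ref{item:forward stopped disjoint}) follows from the same setup: the disjointness hypotheses place $L$ outside $M \cup W$ near infinity and outside $M$ over the compact core, so for $n$ sufficiently large $\phi_n(i_*X)$ and $L$ are actually disjoint, the chord set is empty, and the resulting colimit of Floer complexes vanishes.

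The main obstacle is the confinement step: forcing all relevant perturbed holomorphic curves to remain in $M$ rather than leak into $W$ or escape to infinity. Forward-stoppedness supplies the combinatorial and contact-geometric input for free, but turning it into a workable maximum principle requires a careful choice of contact-type almost complex structure near $\partial_\infty M$ together with a defining function for $W$ whose level sets are preserved by the flow, after which the sector boundary principle and the integrated maximum principle built into Definition \ref{defn:Floer data admissibility} apply. This argument is essentially that of \cite{GanatraPardonShende2018}; the task here is to verify it survives intact in the $\W_{\mathrm{quad}}$ framework, which is made possible by Proposition \ref{prop:WFC are all the same} and Lemma \ref{lem:acceleration is independent of target Floer data} allowing us to move the identification between different Floer-theoretic presentations.
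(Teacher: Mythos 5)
Your overall approach is the same as the paper's, which simply cites \cite[Corollary 8.7]{GanatraPardonShende2018} for part~\eqref{item:forward stopped ff} and gives the one-line observation for part~\eqref{item:forward stopped disjoint} that an $R$-parallel wrapping Hamiltonian has no trajectories from $i_*X$ to $L$. Your reconstruction of the GPS argument is in the right spirit, and part~\eqref{item:forward stopped disjoint} is handled correctly.

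However, one step in your part~\eqref{item:forward stopped ff} argument, as stated, would fail: you claim that chords crossing $\partial^+\partial_\infty M$ into $W$ ``have positive linking number with $D_{\sigma_N}$, and in the presentation $\W(N) = \O(N)/\mathcal C(N)$ they are killed on passing to the quotient by continuation cones.'' This misstates the mechanism on two counts. First, the quotient by $\mathcal C(N)$ kills \emph{objects} (cones of continuation elements), not individual Hamiltonian chords, so nothing is being ``killed'' at the chain level. Second, the constraint that generators have zero linking number with $D_{\sigma}$ is already built into the definition of the Floer complex (see the definition of $\W_\mathrm{quad}$ and $CF^\bullet_\sigma$), not something achieved by the localization. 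The actual mechanism is that the forward-stopping data make the second class empty: once an $R$-trajectory exits $M$ through $\partial^+\partial_\infty M = \partial^- W$, the inward-pointing condition on $\partial_\mathrm{ne}W$ traps it in $W$ until it reaches the stop $\sigma_N$, and since $\partial_\infty(i_*L_1)$ stays away from the stop, such a trajectory cannot close up into a chord. Hence all generators lie inside $M$ from the start, and the identification of complexes follows without any cancellation argument. This is implicit in your own statement that $\phi_n(i_*L)$ stays in $i(M)\cup W$, so your confinement and cofinality claims are the load-bearing part; the linking-number digression should be removed.
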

\begin{proof}
	\eqref{item:forward stopped ff} is \cite[Corollary 8.7]{GanatraPardonShende2018}.
	
	\eqref{item:forward stopped disjoint}: A wrapping Hamiltonian whose flow is parallel to $R$ will have no trajectories from $i_*X$ to $L$.
\end{proof}

\subsection{The Orlov functor}


Before we continue, note that we have not required our stops to be connected. Indeed, we will make heavy use of disconnected stops and typically manipulate them one component at a time. To make this more convenient, we will overload notation and say that a stop \emph{of} a Liouville sector $M=\bar M\setminus\sigma$ is a union of connected components of $\sigma$, or equivalently a union of boundary components of $M$. If $\sigma_j$ is a stop of $M$, then the Liouville sector
\[
M\cup\sigma_j=\bar M\setminus(\sigma\setminus\sigma_j)
\]
still has boundary unless $\sigma_j=\sigma$.

If $\bar F$ is a Liouville manifold, then its \emph{stabilization} $\Sigma\bar F$ is the Liouville sector $\bar F\times T^*[0,1]$. Stabilization induces a fully faithful functor
\[
\Sigma_*\colon\W(\bar F)\to\W(\Sigma\bar F)
\]
which sends an object $L\in\W(\bar F)$ to a conicalization of $L\times T^*_\frac12[0,1]$ \cite{GanatraPardonShende2018}. $\Sigma_*$ depends on a choice of grading on $T^*_\frac12[0,1]$, which we fix once and for all.

If $\sigma$ is a stop of $M$ with fiber $\bar F$, then we have an inclusion of Liouville sectors $i\colon\Sigma\bar F\to M$ given by a conicalization of the symplectomorphism
\[
T^*[0,1]\cong\C_{0\le\Re\le1}.
\]
The \emph{Orlov functor} is the composition
\[
\imath_\sigma=i_*\circ\Sigma_*\colon\W(\bar F)\to\W(M).
\]
This too depends on a choice of graded lift for $i$, which cannot be done canonically. This choice will generally be left implicit as it will not substantially impact our arguments, but when it is important we will draw attention to it. Typically, it will manifest in the grading shifts we use to define various operations.

\begin{rmk}
	The term ``conicalize'' does not have a precise definition, but we will use it throughout the text to mean ``modify near infinity to be compatible with the Liouville flow''. The arguments involving conicalization are usually not sensitive to the precise formula for this, provided it is not unnecessarily complicated and the cutoff functions do not zigzag.
\end{rmk}

\begin{lemma}\label{lem:Orlov deformation invariance}
Let $M_t=\bar B\setminus\sigma_t$ be a family of Liouville sectors parametrized by $t\in[0,1]$, where $\sigma_t$ has fiber $\bar F_t$. Write $\phi\colon\W(\bar F_0)\to\W(\bar F_1)$ for the quasi-equivalence coming from the Liouville automorphism $\bar F_0\cong\bar F_1$ supplied by Moser's lemma, and write $\Phi\colon\W(M_0)\to\W(M_1)$ for the quasi-equivalence coming from deformation invariance, which is realized by a zigzag of trivial inclusions. Then
\begin{equation}\label{Orlov deformation invariance}
\imath_{\sigma_1}\circ\phi\cong\Phi\circ\imath_{\sigma_0}.
\end{equation}
\end{lemma}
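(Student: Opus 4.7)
The overall strategy is to realize both sides of \eqref{Orlov deformation invariance} as pushforwards along the endpoints of an isotopy of inclusions $\Sigma\bar F_0\to N$ into a single ambient Liouville sector $N$, and then invoke Proposition \ref{prop:isotopic inclusions}. Almost all of the content lies in setting up this isotopy; once that is done, the isomorphism is forced.

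First I would perform two parametric preliminaries. By subdividing $[0,1]$ and iterating, we may assume the entire family $M_t$ embeds via a continuous $t$-family of trivial inclusions $j_t\colon M_t\hookrightarrow N$ into a single enlargement $N$. Each $(j_t)_*$ is then a quasi-equivalence, and the zigzag defining $\Phi$ can be arranged so that $(j_1)_*\circ\Phi\cong(j_0)_*$ in $\Fun(\W(M_0),\W(N))$. Next I would apply the parametric Moser lemma to the family of Liouville forms on $\bar F_t$ to produce a continuous family of Liouville isomorphisms $\psi_t\colon\bar F_0\to\bar F_t$ with $\psi_0=\mathrm{id}$ and $(\psi_1)_*=\phi$. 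Stabilizing, I get Liouville isomorphisms $\Sigma\psi_t\colon\Sigma\bar F_0\to\Sigma\bar F_t$, and by functoriality of stabilization together with the chosen grading lifts, $(\Sigma\psi_1)_*\circ\Sigma_0\cong\Sigma_1\circ\phi$, where $\Sigma_t\colon\W(\bar F_t)\to\W(\Sigma\bar F_t)$ denotes stabilization.

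Now form the composed family $f_t := j_t\circ i_t\circ\Sigma\psi_t\colon\Sigma\bar F_0\to N$, where $i_t\colon\Sigma\bar F_t\to M_t$ is the neighborhood inclusion of $\sigma_t$ built into the definition of $\imath_{\sigma_t}$. This is an isotopy of proper, conical-at-infinity inclusions of Liouville sectors, so Proposition \ref{prop:isotopic inclusions} gives $(f_0)_*\cong(f_1)_*$ as objects of $\Fun(\W(\Sigma\bar F_0),\W(N))$. Since $\psi_0=\mathrm{id}$, this unpacks to $(j_0)_*\circ(i_0)_*\cong(j_1)_*\circ(i_1)_*\circ(\Sigma\psi_1)_*$. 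Precomposing with $\Sigma_0$ and inserting the stabilization--Moser compatibility yields $(j_0)_*\circ\imath_{\sigma_0}\cong(j_1)_*\circ\imath_{\sigma_1}\circ\phi$; substituting $(j_0)_*\cong(j_1)_*\circ\Phi$ and cancelling the quasi-equivalence $(j_1)_*$ gives \eqref{Orlov deformation invariance}.

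The main obstacle I expect is the geometric bookkeeping in the parametric setup: verifying that the maps $f_t$ really do form a continuous family of proper inclusions of Liouville sectors that is conical near infinity, and that the graded lifts used to define each individual $\imath_{\sigma_t}$ can be propagated coherently along the Moser isotopy so that the compatibility $(\Sigma\psi_1)_*\circ\Sigma_0\cong\Sigma_1\circ\phi$ holds without an extra shift. These points are routine if one is willing to commit to a parametric version of the definitions of $\Sigma$ and $\imath$ over $t\in[0,1]$, but they are exactly what the brevity of the lemma hides.
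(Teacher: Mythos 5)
Your proposal uses the same essential mechanism as the paper: view both sides as pushforwards of $\Sigma_*L$ along inclusions of $\Sigma\bar F_0$ into a common ambient sector, construct an isotopy of inclusions interpolating between them, and invoke Proposition~\ref{prop:isotopic inclusions}. The only real difference is the choice of common target. The paper restricts (as you do) to a single elementary zigzag, but then takes $M_1$ itself as the target: the two inclusions being compared are $\Sigma\bar F_0\into\Sigma\bar F_1\into M_1$ and $\Sigma\bar F_0\into M^{\mathrm{small}}\into M_1$, and the zigzag already supplies the trivial inclusions needed. You instead ask for a common \emph{enlargement} $N$ with a continuous family of trivial inclusions $j_t\colon M_t\into N$, i.e.\ an ``inverted zigzag'' $M_0\into N\hookleftarrow M_1$. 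That is not among the facts summarized from \cite{GanatraPardonShende2017} (which produce zigzags of the form $M_0\hookleftarrow M^{\mathrm{small}}\into M_1$, not their reversal), and you assert it without proof. It is plausible on a small enough interval — one wants a thinner stop $\sigma'$ with $\sigma'(\bar F\times\C_{\Re<0})\subset\sigma_t(\bar F\times\C_{\Re<0})$ for all nearby $t$ — but it is a genuine extra geometric lemma, and so is your auxiliary compatibility $(j_1)_*\circ\Phi\cong(j_0)_*$, which needs its own application of Proposition~\ref{prop:isotopic inclusions} to the two induced inclusions $M^{\mathrm{small}}\into N$. The paper's choice of $M_1$ as target avoids both of these side constructions, which is exactly what lets its proof be so short. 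So: same mechanism and correct in outline, but your version introduces two additional unproved geometric claims (the existence of $N$ and the compatibility of $\Phi$ with $j_t$) that the paper's route does not need.
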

\begin{proof}
We restrict to the case where deformation invariance is induced by a single zigzag
\[
M_0\hookleftarrow M^\mathrm{small}\into M_1,
\]
where the fiber of the stop $\sigma^\mathrm{small}$ of $M^\mathrm{small}$ is $F^0$.
The Liouville isomorphism $\bar F_0\to\bar F_1$ extends to a proper embedding $\Sigma\bar F_0\into\Sigma\bar F_1$, and the left hand side of \eqref{Orlov deformation invariance} comes from stabilizing and applying the composition
\[
\Sigma\bar F_0\into\Sigma\bar F_1\into M_1.
\]
It is generally not true that this embedding preserves the product decomposition, but it is still the case that it is isotopic to the inclusion $\Sigma\bar F_0\into M^\mathrm{small}$, which is itself isotopic to $\Sigma\bar F_0\into M_0$. By Proposition \ref{prop:isotopic inclusions}, the corresponding functors are isomorphic.
\end{proof}

\begin{defn}
	Say that a Liouville manifold $F$ \emph{satisfies stop removal} if, for any stop $\sigma$ with fiber $\bar F$ of any Liouville sector $M$, the functor
	\[
	\mathcal{SR}\colon\W(M)/\B_\sigma\to\W(M\cup\sigma)
	\]
	is fully faithful, where $\B_\sigma\subset\W(M)$ is the full subcategory of objects in the image of $\imath_\sigma$ and $\mathcal{SR}$ is induced by the inclusion functor
	\[
	i_*\colon\W(M)\to\W(M)/\B_\sigma\xrightarrow{\mathcal{SR}}\W(M\cup\sigma).
	\]
\end{defn}

\begin{eg}
	Every Weinstein manifold satisfies stop removal.
	
	Possibly more generally, any Liouville manifold admitting a singular Lagrangian spine and such that each open stratum of the spine has a transverse ``co-core disk'' satisfies stop removal \cite{GanatraPardonShende2018}.
	
	In another direction, any Liouville domain which (1) satisfies Abouzaid's generation criterion \cite{Abouzaid2010}, (2) has nonzero symplectic cohomology, and such that (3) the preimage under $\mathcal{OC}$ of $\mathbf{1}\in SH^\bullet(F)$ has zero action satisfies stop removal \cite{Sylvan2019}. By performing this proof in a setting with the horizontal confinement principle of \cite{GanatraPardonShende2017}, one expects to be able to show that Abouzaid's criterion alone suffices.
\end{eg}

If $\bar F$ satisfies stop removal, then it is immediate that
\[
\Perf\,\Sigma_*\colon\Perf\,\W(\bar F)\to\Perf\,\W(\Sigma\bar F)
\]
is a quasi-equivalence.

\begin{const}\label{const:orlov via ham}
We now give an alternative construction of the Orlov functor which is better adapted to morphism complexes defined using Hamiltonians.

Again let $M$ be a Liouville sector, and let $\sigma$ be a stop of $M$ with fiber $\bar F$. Let $i\colon\Sigma\bar F\to M$ be the corresponding inclusion of Liouville sectors, and choose the Liouville form $\lambda$ on $\bar M$ to strictly extend the product Liouville form on $\Sigma\bar F$. This costs nothing, since the definition of $i$ already makes $\lambda$ agree with the product form outside a compact set.

Let $H_\Sigma\colon\Sigma\bar F\to\R$ be a quadratic conicalization of $H_{\bar F}+\lvert p\rvert^2$, where $p$ is a momentum coordinate on $T^*[0,1]$ and $H_{\bar F}$ a positive quadratic Hamiltonian on $\bar F$. For reasonable $H_{\bar F}$, this satisfies
\begin{equation}\label{H on stab}
\frac\partial{\partial\lvert p\rvert}H_\Sigma\ge2\lvert p\rvert.
\end{equation}
Here, ``reasonable'' depends on the precise conicalization formula -- the usual formula for conicalizing a split quadratic Hamiltonian yields the condition $Z_{\bar F}H_{\bar F}\ge2H_{\bar F}$.

Extend $H_\Sigma$ to a quadratic Hamiltonian $H$ on $\bar M$. Given a brane $L\subset\hat F$ admitting a compactly supported primitive, choose its stabilization $\Sigma_*L$ to be split over a large subset of $T^*[0,1]$, so that any time 1 trajectory of $X_H$ starting on the non-split part of $i(\Sigma_*L)$ must leave the image of $i$.

Now, the wonderful thing about this setup is that $\bar M$ admits a contact type almost complex structure $J$ which, for all $q\in[0,1]$, makes $\bar F\times\{q\}$ into an almost complex submanifold. Because $X_H$ is also tangent to each of these submanifolds, $H$-perturbed holomorphic curves satisfy positivity of intersection with each of them. Together with asymptotic analysis near the output of a holomorphic disk, this implies that the span of the Floer generators in $CF^\bullet_\sigma(i(\Sigma_*L_0),i(\Sigma_*L_1);H)$ which geometrically live in $\bar F\times\{(p,q)=(0,\frac12)\}$ form a subcomplex $C$, that this subcomplex is closed under all $A_\infty$-operations, and that the corresponding holomorphic disks all live in $\bar F\times\{(0,\frac12)\}$. These facts canonically identify $C$ with $CF^\bullet(L_0,L_1,H_{\bar F})$. We can then define the restricted Orlov functor $\imath_\sigma|_{\W_0(\bar F)}$ to be the inclusion of $C$ on each morphism space, where $\W_0(\bar F)$ is the full subcategory of $\W(\bar F)$ of branes admitting compactly supported primitives. Because every exact Lagrangian is isotopic to one admitting a compactly supported primitive, this is essentially as good as all of $\imath_\sigma$.

For presentations of wrapped Floer cohomology using linear Hamiltonians, the same construction works after replacing $H$ with large multiples of $\sqrt H$.
\end{const}

\subsection{The Viterbo sector}

We recall the sectorial construction of the Viterbo transfer map \cite[Section 8.3]{GanatraPardonShende2018}. Let $\bar M$ be a Liouville manifold and $M^\mathrm{in}\subset\bar M$ a Liouville subdomain, i.e. a compact, codimension zero submanifold with boundary such that $Z$ points out along $\partial M^\mathrm{in}$. The boundary at infinity of $\bar M\times\C$ has a subset contactomorphic under Liouville flow to $\bar M\times S^1$. Under this identification, define the \emph{Viterbo sector} $V_{M^\mathrm{in}}$ to be the Liouville sector associated to the Liouville pair
\[
(\bar M\times\C, \sigma_0\cup\sigma_1),
\]
where $\sigma_0$ is the closure of $\bar M\times\{-1\}$ and $\sigma_1=M^\mathrm{in}\times\{1\}$. This gives rise to Orlov functors $\imath_{\sigma_j}$ associated to the inclusions $i_j\colon\Sigma\bar F_{\sigma_j}\to V_{M^\mathrm{in}}$. We grade these inclusions so that
\[
i_0|_{M^\mathrm{in}\times T^*[0,1]}\simeq i_1|_{M^\mathrm{in}\times T^*[0,1]}[1]
\]
as graded symplectic embeddings, where as usual the shift $[1]$ indicates a shift \emph{down} by one.

\begin{rmk}
	While the precise value of this shift is a matter of convention, an even shift would be slightly unnatural, since it is not compatible with actual orientations on Lagrangians. To be more explicit, note that on objects, the pushforward functors are induced by graded symplectic embedding. This holds even for the orientation $\Z/2$-grading, which means that when we interpret the gradings as actual orientations, they are not automatically intertwined by the diffeomorphism of Lagrangians. The odd shift ensures that they are.
	
	We will proceed with these more natural conventions at the cost of messier formulas. To the reader who would prefer to dispense with this compatibility, I apologize.
\end{rmk}

Now suppose $M^\mathrm{in}$ (or rather its completion) satisfies stop removal. Then 
\[
\mathcal{SR}\colon\W(V_{M^\mathrm{in}})/\B_{\sigma_1}\to\W(V_{M^\mathrm{in}}\cup\sigma_1)=\W(\bar M\times\C\setminus\sigma_0)\cong\mathbf 0
\]
is fully faithful, so the image of $\imath_{\sigma_1}$ split-generates $\W(V_{M^\mathrm{in}})$. On the other hand, Proposition \ref{prop:forward stopped inclusions} implies that $\imath_{\sigma_1}$ is fully faithful. Together, this shows that $\Perf\,\imath_{\sigma_1}$ is a quasi-equivalence, and the \emph{Viterbo transfer map} is defined to be the composition
\[
\mathcal V=(\Perf\,\imath_{\sigma_1}[1])^{-1}\circ\imath_{\sigma_0}\colon\W(\bar M)\to\Perf\,\W(\bar M^\mathrm{in}).
\]

\begin{rmk}
	By proving a stronger stop removal statement, Ganatra, Pardon, and Shende in fact construct the Viterbo transfer map at the level of $\mathrm{Tw}$ instead of $\Perf$ \cite{GanatraPardonShende2018}. However, because we cannot avoid passing to modules in the proof of Theorem \ref{thm:Viterbo is epic}, we will not concern ourselves with the differences at this point.
\end{rmk}

\begin{prop}
	If $L\in\W(\bar M)$ is supported on a compact Lagrangian in $M^\mathrm{in}$, then $\mathcal V(L)$ is the corresponding brane in $\W(\bar M^\mathrm{in})$, and on morphisms
	\[
	\hom_{\W(M)}(\,\cdot\,,L)\to\hom_{\Perf\W(\bar M^\mathrm{in})}(\mathcal V(\,\cdot\,),\mathcal V(L))
	\]
	and
	\[
	\hom_{\W(M)}(L,\,\cdot\,)\to\hom_{\Perf\W(\bar M^\mathrm{in})}(\mathcal V(L),\mathcal V(\,\cdot\,))
	\]
	are quasi-isomorphisms of $\left(\mathrm{end}(L),\W(M)\right)$ and $\left(\W(M),\mathrm{end}(L)\right)$ bimodules.
\end{prop}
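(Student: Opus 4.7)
The statement has two parts: identifying $\mathcal V(L)$ with $L$, and showing that the morphism maps are bimodule quasi-isomorphisms. I sketch each below.

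For the first, I would construct an explicit isotopy in $V_{M^\mathrm{in}}$ between $\imath_{\sigma_0}(L)$ and $\imath_{\sigma_1}(L)[1]$. Because $L$ is compact and contained in $M^\mathrm{in}$, both of these Orlov images can be represented by compact Lagrangians supported inside $M^\mathrm{in} \times \C \subset V_{M^\mathrm{in}}$: concretely, by linking disks of $L$ sitting over short arcs in $\C$ near $-1$ and near $+1$, respectively. Interpolating between these two arcs through the upper half plane of $\C$ gives a compactly supported Hamiltonian isotopy in $M^\mathrm{in}\times\C$ that avoids both stops, and the grading convention $i_0|_{M^\mathrm{in} \times T^*[0,1]} \simeq i_1|_{M^\mathrm{in} \times T^*[0,1]}[1]$ then produces the desired isomorphism $\imath_{\sigma_0}(L) \cong \imath_{\sigma_1}(L)[1]$. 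Applying $(\Perf\,\imath_{\sigma_1}[1])^{-1}$ yields $\mathcal V(L) \cong L$ in $\W(\bar M^\mathrm{in})$.

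For the second part, I focus on the map $\hom_{\W(\bar M)}(\,\cdot\,,L) \to \hom_{\Perf\W(\bar M^\mathrm{in})}(\mathcal V(\,\cdot\,),\mathcal V(L))$; the other direction is symmetric. Using full faithfulness of $\Perf\,\imath_{\sigma_1}[1]$ (which is built into the definition of $\mathcal V$), this map identifies with the Orlov map
\[
\imath_{\sigma_0}\colon\, \hom_{\W(\bar M)}(X, L) \longrightarrow \hom_{\W(V_{M^\mathrm{in}})}\bigl(\imath_{\sigma_0}(X), \imath_{\sigma_0}(L)\bigr),
\]
so it suffices to show the latter is a quasi-isomorphism. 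I would compute the target via Construction \ref{const:orlov via ham}: choose a quadratic Hamiltonian $H$ on $V_{M^\mathrm{in}}$ restricting to $H_{\bar M} + |p|^2$ on a neighborhood $\bar M \times T^*[0,1]$ of $\sigma_0$. Then the Orlov map is by definition the inclusion of the subcomplex $C \cong CF^\bullet(X, L; H_{\bar M})$ spanned by chords in the central slice $\bar M \times \{(0, \tfrac12)\}$, and the claim reduces to showing $C$ already exhausts $CF^\bullet_{\sigma_0 \cup \sigma_1}(\imath_{\sigma_0}(X), \imath_{\sigma_0}(L); H)$.

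Here the compactness of $L$ enters decisively. Since both Lagrangians sit in $\Sigma\bar M$ and the $|p|^2$-flow on $T^*[0,1]$ has only the zero-section fixed point as a time-$1$ orbit from $T^*_{1/2}$ to itself, product chords in the $\sigma_0$-neighborhood all lie in the central slice. Chords winding nontrivially in the $\C$-factor have nonzero linking with $D_{\sigma_0}$ (whose fiber is all of $\bar M$) and so do not contribute to $CF^\bullet_\sigma$. The delicate remaining case is a chord that excurses outside the product neighborhood of $\sigma_0$ without winding; this I would control by arranging the extension of $H$ to be strongly outward-pushing in the $\C$-radial direction away from the support of $\imath_{\sigma_0}(L)$, and invoking an integrated maximum principle on the angular coordinate to confine all chords to the product region. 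This confinement is the main technical obstacle. The bimodule statement is then automatic, since the Orlov map is a morphism of $A_\infty$-bimodules and the isotopy of the first part is canonical up to contractible choice.
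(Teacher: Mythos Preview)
Your treatment of the object-level statement matches the paper's: both argue that $\imath_{\sigma_0}L$ and $\imath_{\sigma_1}L[1]$ are Hamiltonian isotopic inside $V_{M^\mathrm{in}}$, using that $L$ is compact and contained in $M^\mathrm{in}$.

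For the morphism statement, your reduction to full faithfulness of $\imath_{\sigma_0}$ on morphisms into and out of $L$ is correct, but the confinement step has a real gap. You propose to extend $H$ so that it is ``outward-pushing in the $\C$-radial direction'' and then invoke an ``integrated maximum principle on the angular coordinate''. Neither of these is quite right. A maximum principle constrains holomorphic curves, not Hamiltonian chords, so it is not the relevant tool for ruling out chords. More seriously, the quadratic Hamiltonian on $V_{M^\mathrm{in}}$ is forced to generate Reeb flow at infinity, and on the $\bar M\times S^1$ portion of $\partial_\infty(\bar M\times\C)$ this flow \emph{rotates} in $S^1$ rather than pushing radially outward. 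So once a chord exits the product neighborhood $\Sigma\bar M$, nothing in your setup prevents it from winding partway around the $S^1$ and re-entering. Your argument as written does not use the hypothesis $L\subset M^\mathrm{in}$ at all in this step, which is a sign that something is missing.

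The paper closes this gap differently: it works in the localization (wrapping) model rather than the quadratic one, and uses a Weinstein neighborhood $U\subset M^\mathrm{in}$ of $L$. Because $L$ is compact, wrapping $\imath_{\sigma_0}L$ inside $\Sigma\bar M$ keeps the $\bar M$-component near $L$, so the wrapped Lagrangian exits $\partial\Sigma\bar M$ only through the cone over $U\times S^1$. Since $U\subset M^\mathrm{in}$, this exit region is caught by the stop $\sigma_1=M^\mathrm{in}\times\{1\}$, and one can then arrange the wrapping outside $\Sigma\bar M$ to never return, exactly as in the forward-stopped inclusion arguments of Proposition~\ref{prop:forward stopped inclusions}. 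This is where $L\subset M^\mathrm{in}$ enters essentially: were $U\not\subset M^\mathrm{in}$, the exit region would miss $\sigma_1$ and the wrapped Lagrangian could circle back. Your chord-confinement idea could perhaps be salvaged by importing this same geometric mechanism, but the tools you named do not do the job.
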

\begin{proof}[Sketch of proof]
	The claim about objects follows from the fact that $\imath_{\sigma_0}L$ and $\imath_{\sigma_1}L[1]$ are Hamiltonian isotopic in $V_{M^\mathrm{in}}$.
	
	For the claims about morphisms, let $U\subset M^\mathrm{in}$ be a Weinstein neighborhood of $L$. We can arrange that wrapping $\imath_{\sigma_0}L$ backwards or forwards in $\Sigma\bar M\subset V_{M^\mathrm{in}}$ crosses $\partial\Sigma\bar M$ in the Liouville cone over $U\times S^1$. From there, it is easy to prevent return to $\Sigma\bar M$, as in the theory of forward stopped inclusions. 
\end{proof}

\subsection{Comparison with the Abouzaid--Seidel construction}\label{sec:abouzaid--seidel}

Suppose now that $M^\mathrm{in}\subset\bar M$ is a Liouville subdomain satisfying stop removal, and that $\W_0(\bar M)\subset\W(\bar M)$ is the full subcategory consisting of exact Lagrangians $L$ which are conical in a neighborhood of $\partial M^\mathrm{in}$ and which admit a primitive vanishing on $L\cap\partial M^\mathrm{in}$. For these ``strongly exact'' Lagrangians, Abouzaid and Seidel \cite{AbouzaidSeidel2010} construct a Viterbo transfer map $\mathcal V_\mathrm{AS}$ by a neck-stretching procedure modeled after Viterbo's construction for symplectic cohomology \cite{Viterbo1999}. We wish to see that $\mathcal V_\mathrm{AS}$ agrees with the restriction $\mathcal V|_{\W_0(\bar M)}$.

That said, we also wish to avoid delving into the details of Abouzaid and Seidel's construction, so instead we list its essential properties.

\begin{itemize}
	\item The ambient wrapped Fukaya categories are presented using (linear) Hamiltonians.
	\item On morphisms, $\mathcal V_\mathrm{AS}$ is defined using counts of perturbed holomorphic curves in neck-stretched copies of $\bar M$ (this is of course Liouville isomorphic to $\bar M$ itself, but using this fact would require keeping track of $\partial M^\mathrm{in}$).
	\item If $\bar M\setminus M^\mathrm{in}$ is a trivial cobordism, then $\mathcal V_\mathrm{AS}$ is a quasi-equivalence.
\end{itemize}

A side benefit of taking this approach is that it applies equally well to any other construction of the Viterbo transfer functor using Hamiltonians. In particular, our argument will show that any extension of $\mathcal V_\mathrm{AS}$ to non-strongly exact Lagrangians using a Maurer-Cartan term which comes from counting holomorphic curves will still be isomorphic to our $\mathcal V$.

To begin, note that the wrapped Fukaya category presented with linear Hamiltonians $\W_\mathrm{lin}(M)$ agrees with the other versions of wrapped Fukaya category we have considered. Let us sketch the proof. Begin by building a larger linear category $\W_\mathrm{lin}^+(M)$ whose objects are pairs $(L,n)$, where $n\in\Z_{\ge0}$, and whose morphism complexes $\hom\left((L_0,n_0),(L_1,n_1)\right)$ have an initial $H=0$ piece whenever $n_0>n_1$. After adjoining a strict unit, $\W_\mathrm{lin}^+(M)$ admits tautological inclusions from $\W_\mathrm{lin}(M)$ and $\O$, where the first inclusion is a quasi-equivalence and the second becomes a quasi-equivalence after localizing by continuation elements, as in the proof of Proposition \ref{prop:WFC are all the same}. This works not only on ordinary Liouville manifolds, but just as well on Liouville sectors, if the Floer data is chosen compatibly with $D_\sigma$ and the generators are restricted to those whose linking number with $D_\sigma$ is zero.

The neck-stretching procedure of \cite{AbouzaidSeidel2010} works essentially without modification for $\W_\mathrm{lin}$ or $\W_\mathrm{lin}^+$, defining a transfer map between Liouville sectors. Spelling this out and suppressing the ${}_\mathrm{lin}$ or ${}_\mathrm{lin}^+$, define a \emph{sectorial Liouville subdomain} $M^\mathrm{in}$ of a Liouville sector $M$ to be a compact, codimension zero submanifold with boundary and corners, such that
\begin{itemize}
	\item the boundary comes in vertical $\partial^vM^\mathrm{in}\subset\partial M$ and horizontal $\partial^hM^\mathrm{in}\pitchfork\partial M$ flavors,
	\item all corners lie in $\partial M$,
	\item $Z$ points outward along $\partial^hM^\mathrm{in}$, and
	\item attaching the positive half of the symplectization of $\partial^hM^\mathrm{in}$ makes $M^\mathrm{in}$ into a Liouville sector.
\end{itemize}

Given a sectorial Liouville subdomain $M^\mathrm{in}$ of $M$, we can extend it to an ordinary Liouville subdomain $M^\mathrm{in}_+$ of $\bar M$ without changing its intersection with $M$. It follows that, up to attaching symplectizations, the stop $\sigma^\mathrm{in}$ of $M^\mathrm{in}$ has fiber $F^\mathrm{in}\cong D_\sigma\cap M^\mathrm{in}_+$, and that $M^\mathrm{in}\cup\sigma^\mathrm{in}\cong M^\mathrm{in}_+$.

Now consider stretching the neck $\bar M$ along $\partial M^\mathrm{in}_+$. Choose Floer data for this which extend Floer data for $\bar F$ stretched along $\partial F^\mathrm{in}$. Then all resulting holomorphic curves will have positive intersections with $D_\sigma$, so by a winding number argument the functor $\mathcal V_\mathrm{AS}\colon\W_0(\bar M)\to\W(\bar M^\mathrm{in}_+)$ will induce a functor $\mathcal V_\mathrm{AS}^\mathrm{sect}\colon\W_0(M)\to\W(\bar M^\mathrm{in})$.

To compare the two constructions of the Viterbo map, we will study the sectorial Liouville subdomain $V^\mathrm{in}\subset V_{M^\mathrm{in}}$ given by
\[
V^\mathrm{in}=V_{M^\mathrm{in}}\cap\left(\bar M^\mathrm{in}\times\C\right),
\]
where the intersection takes place in $\bar M\times\C$. After attaching symplectizations, we have $V^\mathrm{in}\cong\Sigma M^\mathrm{in}$ as Liouville sectors, so $\Perf\,\imath_{\sigma_0^\mathrm{in}}$ and $\Perf\,\imath_{\sigma_1^\mathrm{in}}$ are quasi-equivalences, where $\sigma_0^\mathrm{in}$ and $\sigma_1^\mathrm{in}$ are the stops of $V^\mathrm{in}$. Putting all of this together, we have a diagram

\[
\begin{tikzcd}
\W(\bar M) \ar[d,"\mathcal V_\mathrm{AS}"] \ar[r,"\imath_{\sigma_0}"] \ar[rr, bend left, "\mathcal V"] &
\Perf\,\W(V_{M^\mathrm{in}}) \ar[d,"\mathcal V_\mathrm{AS}^\mathrm{sect}"] &
\Perf\,\W(\bar M^\mathrm{in}) \ar[d,"\mathcal V_\mathrm{AS}" right,"\cong" left] \ar{l}[above]{\imath_{\sigma_1}[1]}[below]{\cong}
\\
\Perf\,\W(\bar M^\mathrm{in}) \ar[r,"\imath_{\sigma_0^\mathrm{in}}" above,"\cong" below] \ar[rr, bend right, "\mathrm{Id}"]&
\Perf\,\W(V^\mathrm{in}) &
\Perf\,\W(\bar M^\mathrm{in}) \ar{l}[above]{\imath_{\sigma_1^\mathrm{in}}[1]}[below]{\cong}
\end{tikzcd}
\]

By constraining holomorphic curves as in Construction \ref{const:orlov via ham}, we see that the rectangles commute on the nose, and the triangles commute up to isomorphism. This implies that $\mathcal V$ and $\mathcal V_\mathrm{AS}$ agree up to applications of $\mathcal V_\mathrm{AS}$ on trivial cobordisms.

\section{Subdomains and bimodules}\label{ch:subdomains and bimodules}


\subsection{Algebraic preliminaries}
\subsubsection{Bimodules}
We work with \emph{reduced little-endian signs} and \emph{left composition}. ``Reduced'' means that morphisms in $A_\infty$-categories have reduced degree when computing Koszul signs. ``Little-endian'' means that operators act from the source side, which is usually indexed by the smaller numbers. ``Left composition'' means we write morphisms target-to-source, as in function composition
\[
\mu^2(f,g)(x)=f\circ g(x)=f(g(x)).
\]
In other words, we follow conventions where the $A_\infty$ operations $\mu^d$ satisfy
\[
\sum_{i=0}^{d-1}\sum_{k=1}^{d-i}(-1)^{(\lvert a_1\rvert-1)+\dotsm+(\lvert a_i\rvert-1)}\mu\left(a_d,\dotsc,a_{i+k+1},\mu^k\left(a_{i+k},\dotsc,a_{i+1}\right),a_i,\dotsc,a_1\right).
\]
When working with $A_\infty$-bimodules, we similarly follow the sign conventions of \cite{Ganatra2013}, which agree with those of \cite{Seidel2008} up to reading in a mirror. I refer the reader to those references for detailed definitions and recall here only the most basic idea of the relevant concepts.

Explicitly, for $A_\infty$-categories $\mathcal A$ and $\mathcal B$, an $(\mathcal A,\mathcal B)$-bimodule $M$ consists of graded vector spaces $M(A,B)$ for each $A\in\mathcal A$ and $B\in\mathcal B$, together with structure maps
\begin{align*}
\mu^{k\vert1\vert l}\colon\hom_{\mathcal A}(A_{k-1},A_k)\otimes\dotsm\otimes\hom_{\mathcal A}(A_0,A_1)\otimes M(A_0,B_0)&\otimes\hom_{\mathcal B}(B_1,B_0)\otimes\dotsm\otimes\hom_{\mathcal B}(B_l,B_{l-1})\\
&\to M(A_k,B_l)
\end{align*}
of degree $1-k-l$ satisfying a family of lengthy identities.

Given functors $F\colon\mathcal A'\to\mathcal A$ and $G\colon\mathcal B'\to\mathcal B$, we can form the \emph{pullback} $(\mathcal A',\mathcal B')$-bimodule $(F,G)^*M$, which at the level of objects associates to $(A',B')$ the space $M(F(A'),G(B'))$.

We can also take \emph{tensor products}: for $M$ an $(\mathcal A,\mathcal B)$-bimodule and $N$ a $(\mathcal B,\mathcal C)$-bimodule, their tensor product is a $(\mathcal A,\mathcal C)$-bimodule $M\otimes_{\mathcal B}N$ with
\[
M\otimes_{\mathcal B}N(A,C)=\sum_{B_0,\dotsc,B_k\in\mathcal B}M(A,B_k)\otimes\hom_{\mathcal B}(B_{k-1},B_k)[1]\otimes\dotsm\otimes\hom_{\mathcal B}(B_0,B_1)[1]\otimes N(B_0,C).
\]

Of particular importance is the \emph{diagonal} $(\mathcal A,\mathcal A)$-bimodule $\Delta_{\mathcal A}$, which at the level of objects has
\[
\Delta_{\mathcal A}(X_1,X_0)=\hom_{\mathcal A}(X_0,X_1).
\]
Up to isomorphism, the diagonal bimodule is a unit for the tensor product. For a functor $F\colon\mathcal A\to\mathcal B$, we have the \emph{graph} $(\mathcal A,\mathcal B)$-bimodule
\[
\Gamma(F):=(F,\mathrm{Id}_{\mathcal B})^*\Delta_{\mathcal B}
\]
and \emph{adjoint graph} $(\mathcal B,\mathcal A)$-bimodule
\[
\Gamma^\dagger(F):=(\mathrm{Id}_{\mathcal B},F)^*\Delta_{\mathcal B}.
\]
Either of these completely encodes $F$ up to isomorphism.

Given an $(\mathcal A,\mathcal B)$-bimodule $M$, we can form the \emph{semiorthogonal gluing} of $\mathcal A$ and $\mathcal B$ along $M$. This is an $A_\infty$-category $\mathcal C$ with
\[
\Ob(\mathcal C)=\Ob(\mathcal B)\amalg\Ob(\mathcal A)
\]
and
\[
\hom_{\mathcal C}(X,Y)=\begin{cases}
\hom_{\mathcal B}(X,Y)&X,Y\in\mathcal B\\
\hom_{\mathcal A}(X,Y)&X,Y\in\mathcal A\\
M(Y,X)&X\in\mathcal B,Y\in\mathcal A\\
0&X\in\mathcal A,Y\in\mathcal B.
\end{cases}
\]
The $A_\infty$ operations on each component agree with those of $\mathcal A$ or $\mathcal B$, while those on mixed morphisms spaces are given by
\[
\mu^{k+l+1}(a_k,\dotsc,a_1,m,b_1,\dotsc,b_l)=(-1)^{(\lvert b_1\rvert-1)+\dotsm+(\lvert b_l\rvert-1)+1}\mu^{k\vert1\vert l}(a_k,\dotsc,a_1,m,b_1,\dotsc,b_l).
\]
The sign comes from the fact that the identities satisfied by the bimodule structure maps use the unreduced degree of $m$ instead of the reduced degree, and it is the same sign as appears in the definition of $\Delta$. We will write $\mathcal C=\langle\mathcal B,\mathcal A\rangle$ when the bimodule is understood and
\[
\mathcal C=\begin{vmatrix}
\mathcal B&M\\&\mathcal A
\end{vmatrix}
\]
when it otherwise isn't. In the notation of \cite{Tabuada2007}, this is the same as the totalization of the upper triangular category $
\begin{pmatrix}
\mathcal B&M\\0&\mathcal A
\end{pmatrix}$.

\begin{lemma}[{\cite{Tabuada2007}, see also \cite[Proposition 7.7]{KuznetsovLunts2014}}]\label{prop:K-L}
	An $A_\infty$-functor $F\colon\begin{vmatrix}
	\mathcal B&M\\&\mathcal A
	\end{vmatrix}\to\mathcal D$ is precisely the data of
	\begin{itemize}
		\item an $A_\infty$-functor $F_{\mathcal B}\colon\mathcal B\to\mathcal D$,
		\item an $A_\infty$-functor $F_{\mathcal A}\colon\mathcal A\to\mathcal D$, and
		\item a closed morphism of $(\mathcal A,\mathcal B)$-bimodules $F_M\in\hom^0\left(M,(F_{\mathcal A},F_{\mathcal B})^*\Delta_{\mathcal D}\right)$.
	\end{itemize}
\end{lemma}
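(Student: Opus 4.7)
The plan is to exploit the fact that the semiorthogonality $\hom_{\mathcal C}(\mathcal A,\mathcal B)=0$ forces every nonzero composable sequence of morphisms in $\mathcal C=\langle\mathcal B,\mathcal A\rangle$ to have one of three shapes: entirely in $\mathcal B$, entirely in $\mathcal A$, or a (possibly empty) string in $\mathcal B$ followed by exactly one $M$-morphism followed by a (possibly empty) string in $\mathcal A$. The data of an $A_\infty$-functor $F\colon\mathcal C\to\mathcal D$ therefore decomposes tautologically according to input type, yielding restrictions $F_{\mathcal A}$ and $F_{\mathcal B}$ together with a collection of mixed components $F^{k+1+l}(a_k,\dotsc,a_1,m,b_1,\dotsc,b_l)$ landing in $\hom_{\mathcal D}(F(B_l),F(A_k))$. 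By definition of the pullback, this last space is precisely $(F_{\mathcal A},F_{\mathcal B})^*\Delta_{\mathcal D}(A_k,B_l)$, so the mixed components are exactly of the shape of the components of an $(\mathcal A,\mathcal B)$-bimodule morphism $F_M\colon M\to(F_{\mathcal A},F_{\mathcal B})^*\Delta_{\mathcal D}$. The conjectured bijection is thus built into the bookkeeping, and the content of the lemma is that the $A_\infty$-functor equation for $F$ corresponds term-by-term to the $A_\infty$-functor equations for $F_{\mathcal A}$ and $F_{\mathcal B}$ together with the closedness of $F_M$.

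The next step is to verify this correspondence by partitioning the $A_\infty$-functor equation for $F$ according to the type of input string. On a pure $\mathcal B$-string (resp.\ pure $\mathcal A$-string) the equation only involves the $A_\infty$ operations of $\mathcal B$ (resp.\ $\mathcal A$) and of $\mathcal D$, and it reduces to the $A_\infty$-functor equation for $F_{\mathcal B}$ (resp.\ $F_{\mathcal A}$). On a mixed string $a_k,\dotsc,a_1,m,b_1,\dotsc,b_l$, every term in the equation must contain exactly one $F_M$-factor: the inner $\mu_{\mathcal C}$ either acts on a contiguous piece that omits $m$ (giving a pure $\mathcal A$- or $\mathcal B$-composition that is absorbed by $F_{\mathcal A}$ or $F_{\mathcal B}$) or acts on a piece containing $m$, which by the gluing formula for $\mathcal C$ unpacks as $\mu^{k'\vert1\vert l'}$ of $M$; symmetrically, the outer $\mu_{\mathcal D}$ consumes one $F_M$-factor together with any number of $F_{\mathcal A}$- and $F_{\mathcal B}$-factors, which is exactly the shape of $\mu_{\mathcal D}^{k\vert1\vert l}$ applied to $(F_{\mathcal A},F_{\mathcal B})^*\Delta_{\mathcal D}$. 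Collecting the terms produces precisely the defining identity for $F_M$ to be a closed bimodule morphism. Running this calculation in reverse packages any triple $(F_{\mathcal B},F_{\mathcal A},F_M)$ into an $A_\infty$-functor $F$, establishing the bijection.

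The only real obstacle will be reconciling sign conventions. The gluing formula carries the extra sign $(-1)^{(\lvert b_1\rvert-1)+\dotsm+(\lvert b_l\rvert-1)+1}$ precisely to translate between the reduced grading used for morphisms in $\mathcal C$ and the unreduced grading of $m$ that enters the bimodule identities, and the matching of equations above only goes through if this sign is the correct one. Verifying it amounts to a Koszul-sign chase on a generic mixed string in the reduced little-endian left-composition conventions used here, matching the bimodule sign conventions of \cite{Ganatra2013}; this step is routine but unenlightening, so I would do it once carefully on a representative mixed input and then invoke it uniformly. Once the signs are pinned down, the bijection of the lemma is formal, and no further input is needed beyond what already appears in \cite{Tabuada2007, KuznetsovLunts2014}.
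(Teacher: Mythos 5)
Your proposal is correct and is the standard direct verification that the cited references also use; the paper itself states the lemma with a citation to Tabuada and Kuznetsov--Lunts and does not give its own proof, so there is no in-paper argument to compare against. Your partition of nonzero composable strings into pure-$\mathcal B$, pure-$\mathcal A$, and $\mathcal B$-then-$M$-then-$\mathcal A$ is exactly right (semiorthogonality forbids a second $M$-factor), the identification of the mixed components of $F$ with a degree-$0$ prebimodule map $F_M$ is correct, and the term-by-term matching of the $A_\infty$-functor equation with the functor equations for $F_{\mathcal A}$, $F_{\mathcal B}$ plus closedness of $F_M$ is the right mechanism; the only thing you have deferred is the Koszul sign chase, which is genuinely the only nontrivial bookkeeping and which the extra sign in the paper's definition of the gluing was engineered to make come out.
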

We will also need three-term semiorthogonal gluings. These can be described in similar terms to the above or viewed as iterated two-term gluings.

\subsubsection{Localizations and epimorphisms}
Let $\mathcal C\subset\mathcal A$ be a full $A_\infty$ subcategory. Write $\mathcal A/\mathcal C$ for the Lyubashenko--Ovsienko quotient category \cite{LyubashenkoOvsienko2006} with $\Ob(\mathcal A/\mathcal C)=\Ob(\mathcal A)$ and
\[
\hom_{\mathcal A/\mathcal C}(X,Y)=\bigoplus_{j\ge0}\bigoplus_{C_1,\dotsc,C_j\in\mathcal C}\hom_{\mathcal A}(C_j,Y)\otimes\dotsm\otimes\hom_{\mathcal A}(X,C_1)[j],
\]
where by convention the $j=0$ term is just $\hom_{\mathcal A}(X,Y)$. The $A_\infty$ operations are given by
\begin{align*}
\mu_{\mathcal A/\mathcal C}^d(a_d^{j_d}\otimes\dotsm\otimes &a_d^0\,,\,\dotsc\,,\,a_1^{j_1}\otimes\dotsm\otimes a_1^0)\\
&=\sum_{k=0}^{j_d}\sum_{l=0}^{j_1}(-1)^{(\lvert a_1^0\rvert-1)+\dotsm+(\lvert a_1^{l-1}\rvert-1)}a_d^{j_d}\otimes\dotsm\otimes a_d^{k+1}\otimes\mu_{\mathcal A}(a_d^k,\dotsc,a_1^l)\otimes a_1^{l-1}\otimes\dotsm\otimes a_1^0.
\end{align*}
In general, a \emph{localization} is a functor
\[
F\colon\mathcal A\to\mathcal B
\]
such that the induced map
\[
\Perf\,\bigl(\mathcal A/(\ker F)\bigr)\to\Perf\,\mathcal B
\]
is a quasi-equivalence.

There is a related but weaker notion, which is what we aim to show the Viterbo transfer map satisfies.

\begin{defn}\label{def:homological epi}
	A functor $F\colon\mathcal A\to\mathcal B$ is a \emph{homological epimorphism} if it satisfied one of the following three equivalent conditions.
	\begin{enumerate}
		\item The extension of scalars functor $F_!=\bullet\otimes\Gamma(F)\colon\mathrm{Mod}\text{-}\mathcal A\to\mathrm{Mod}\text{-}\mathcal B$ is a localization.
		\item The pullback functor $F^*\colon\mathrm{Mod}\text{-}\mathcal B\to\mathrm{Mod}\text{-}\mathcal A$ is fully faithful.
		\item The natural map $\Gamma^\dagger(F)\otimes_{\mathcal A}\Gamma(F)\to\Delta_{\mathcal B}$ is a quasi-isomorphism of $(\B,\B)$-bimodules.
	\end{enumerate}
\end{defn}

The reader can find an account of this equivalence and other useful background on homological epimorphisms in \cite{Efimov2013}. For our purposes, we recall two facts and state one conjecture as motivation for this definition.

\begin{thm}[\cite{Neeman1996}, see also \cite{Efimov2013}]\label{thm:Neeman}
	For an $A_\infty$-functor $F\colon\mathcal A\to\mathcal B$, the following are equivalent.
	\begin{enumerate}
		\item $F$ is a localization.
		\item $F$ is a homological epimorphism, and $\ker F_!$ is compactly generated.
	\end{enumerate}
\end{thm}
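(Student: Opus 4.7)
The plan is to reduce the equivalence to Neeman's theorem comparing Bousfield localizations of compactly generated triangulated categories with Verdier quotients on their compact subcategories. The starting point is that a localization of $\mathcal A$ and a homological epimorphism are, respectively, the compact and module-level versions of essentially the same phenomenon; the real content of the theorem is how one transfers the module-level picture back down to compact objects.

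For the direction (1) $\Rightarrow$ (2), I would take the quasi-equivalence $\Perf(\mathcal A/\ker F) \to \Perf\mathcal B$ and pass to module categories by Ind-completion. Since Ind-completion commutes with quotients, this gives an equivalence $\mathrm{Mod}\text{-}(\mathcal A/\ker F) \simeq \mathrm{Mod}\text{-}\mathcal B$ under which $F_!$ becomes the extension of the tautological quotient functor. That quotient functor sits in a Bousfield localization with kernel generated as a localizing subcategory by the compact objects $\ker F \subset \Perf\mathcal A$, so $\ker F_!$ is compactly generated. Fully faithfulness of the right adjoint $F^*$ (which is the content of being a Bousfield localization) is exactly condition (2) of Definition \ref{def:homological epi}, so $F$ is a homological epimorphism.

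For (2) $\Rightarrow$ (1), the essential input is Neeman's theorem that if $\mathcal T$ is a compactly generated triangulated category and $\mathcal K \subset \mathcal T$ is a compactly generated localizing subcategory, then the induced functor $\mathcal T^c/\mathcal K^c \to (\mathcal T/\mathcal K)^c$ is an equivalence up to splitting idempotents. Applying this with $\mathcal T = \mathrm{Mod}\text{-}\mathcal A$ and $\mathcal K = \ker F_!$, and using the Bousfield identification $\mathrm{Mod}\text{-}\mathcal B \simeq \mathrm{Mod}\text{-}\mathcal A/\ker F_!$ provided by the homological epimorphism hypothesis, one obtains an equivalence up to summands between $\Perf\mathcal A / (\ker F_!)^c$ and $\Perf\mathcal B$. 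The remaining work is to identify $(\ker F_!)^c$ with the thick subcategory generated by $\ker F$ inside $\Perf\mathcal A$, and then to match the resulting Verdier-style quotient with the Lyubashenko--Ovsienko quotient $\mathcal A/\ker F$, which follows by comparing universal properties.

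The main obstacle is the identification $(\ker F_!)^c \simeq \mathrm{thick}(\ker F)$ up to summands. One knows \emph{a priori} only that $\ker F_!$ is compactly generated by some set of objects $\mathcal S$; the nontrivial step is the second Neeman-style lemma that the compact objects of a compactly generated localizing subcategory coincide (up to direct summands) with the thick envelope of any compact generating set, and that any choice of $\mathcal S$ can be replaced by $\ker F$ itself because these have the same closure under triangles and filtered colimits. Once this structural fact is in place, the matching with the Lyubashenko--Ovsienko quotient is a formal consequence of its universal characterization as the initial $A_\infty$-category receiving $\mathcal A$ and killing $\ker F$.
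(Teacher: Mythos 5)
The paper does not actually prove Theorem \ref{thm:Neeman}: it is stated as a citation to Neeman and Efimov, so there is no internal proof to compare against. That said, your sketch follows the standard Neeman route (Ind-completion for $(1)\Rightarrow(2)$; the Neeman--Thomason--Trobaugh theorem on compact objects of quotients for $(2)\Rightarrow(1)$), and the high-level structure is right.

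There is, however, a genuine gap at the step you yourself flag as ``the main obstacle.'' You assert that $(\ker F_!)^c$ coincides up to summands with $\operatorname{thick}(\ker F)$ ``because these have the same closure under triangles and filtered colimits,'' but this is precisely what is not automatic and is where the content of the theorem lies. What Neeman's lemma gives you is $(\ker F_!)^c = \ker F_! \cap \Perf\mathcal A$ (the perfect $\mathcal A$-modules killed by $F_!$). What the paper's definition of localization requires is the thick envelope of $\ker F$, where $\ker F$ consists of the \emph{objects of $\mathcal A$} killed by $F$. These are not the same thing in general. For instance, take $\mathcal A$ to be the one-object category with endomorphism ring $k\times k$ and $F$ the projection to the first factor: then $F$ is a homological epimorphism with $\ker F_!$ compactly generated by the perfect module $(0,k)$, yet $\ker F = \emptyset$ because $F$ kills no object of $\mathcal A$. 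So $(\ker F_!)^c \ne \operatorname{thick}(\ker F)$, and in fact $\Perf(\mathcal A/\ker F) = \Perf(k\times k) \not\simeq \Perf(k) = \Perf\mathcal B$. Either the definition of localization should be read with $\ker F$ meaning $\ker(\Perf F) \subset \Perf\mathcal A$ (in which case the identification you want is immediate and the example is no longer a counterexample, since $\Perf(k\times k)/\operatorname{thick}\{(0,k)\}\simeq\Perf k$), or one needs an extra hypothesis such as $\mathcal A$ being pretriangulated and idempotent-complete. Your proof needs to either adopt that convention explicitly or supply an argument for why, in the intended generality, $\ker F$ already generates $(\ker F_!)^c$; as written, the claim is asserted rather than established, and it is exactly the delicate point.
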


\begin{lemma}\label{lem:epis are surjective}
	Let $F\colon\mathcal A\to\mathcal B$ be a homological epimorphism. Then the image of $F$ split-generates $\mathcal B$.
\end{lemma}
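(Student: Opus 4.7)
The plan is to extract split-generation from characterization (3) of Definition \ref{def:homological epi}, which asserts that the natural map
\[
\Gamma^\dagger(F) \otimes_{\mathcal A} \Gamma(F) \to \Delta_{\mathcal B}
\]
is a quasi-isomorphism of $(\mathcal B,\mathcal B)$-bimodules. Fix an object $B \in \mathcal B$, and restrict this quasi-isomorphism to the column over $B$. This identifies the Yoneda module $y_B = \hom_{\mathcal B}(\,\cdot\,, B)$ with the bar complex whose $k$-th term is the direct sum, over sequences $A_0, \ldots, A_k$ in $\mathcal A$, of tensor products of the representable $y_{F(A_k)}$ with the vector spaces $\hom_{\mathcal A}(A_{i-1}, A_i)$ and $\hom_{\mathcal B}(F(A_0), B)$. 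In particular, $y_B$ is the homotopy colimit of finite truncations $X_n$ of this bar complex, each built from the set $\{y_{F(A)}\}_{A \in \mathcal A}$ using direct sums, shifts, and finitely many mapping cones.

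Second, I would invoke compactness of Yoneda modules. Since $y_B$ is a compact object of $\mathrm{Mod}\text{-}\mathcal B$, the identity map $y_B \to \hocolim_n X_n = y_B$ factors through a single $X_n$, which exhibits $y_B$ as a retract of $X_n$. Combining this with the standard fact (Thomason--Neeman) that a compact object in the cocomplete triangulated subcategory generated by a set of compact objects already lies, up to summands, in the thick subcategory they generate, one concludes that $y_B$ lies in the split-closure (inside $\Perf\,\mathcal B$) of the triangulated envelope of $\{y_{F(A)}\}$. Transporting this conclusion along the Yoneda embedding $\mathcal B \into \Perf\,\mathcal B$, which is fully faithful and preserves cones and summands, yields that $B$ is split-generated by the image of $F$, as desired.

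The main obstacle I anticipate is precisely the compactness step: the bar resolution on its own only presents $y_B$ as an infinite colimit of $y_{F(A)}$'s, which a priori places $y_B$ in the cocomplete hull rather than the thick (split-closed, finite) hull. It is perfectness of the Yoneda module that collapses this infinite presentation to a finite twisted complex together with an idempotent summand. The remaining verifications --- that the bar complex computes $\Gamma^\dagger(F) \otimes_{\mathcal A} \Gamma(F)$ and that its truncations assemble to the asserted $\hocolim$ --- are formal consequences of the tensor product and Yoneda formulas recalled in the section.
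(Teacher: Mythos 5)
Your proof is correct, but it takes a different route from the paper's. The paper uses characterization (2) of Definition \ref{def:homological epi} (full faithfulness of the pullback $F^*$) together with a detection criterion: by Drinfeld--Gaitsgory, split-generation by $\{F(A)\}$ reduces to showing that any module $M \in \mathrm{Mod}\text{-}\mathcal{B}$ with $M(F(A))$ acyclic for all $A$ must vanish; this follows since $F^*M$ then vanishes on all of $\mathcal A$ (which compactly generates $\mathrm{Mod}\text{-}\mathcal A$), hence $F^*M \cong 0$, hence $M \cong 0$ by full faithfulness. You instead use characterization (3) (the bimodule quasi-isomorphism $\Gamma^\dagger(F) \otimes_{\mathcal A} \Gamma(F) \to \Delta_{\mathcal B}$), read off an explicit bar resolution of the Yoneda module $y_B$ by shifted copies of representables $y_{F(A)}$, and then invoke Thomason--Neeman to pass from ``lies in the localizing hull'' to ``lies in the thick hull.'' Both routes are logically sound and each ultimately hides a Thomason--Neeman-type input (the Drinfeld--Gaitsgory corollary the paper cites encapsulates the same compactness collapse). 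The paper's argument is shorter and purely detection-theoretic; yours is more constructive in that it produces an explicit infinite presentation before compressing it. One small slip of notation: with the paper's conventions, fixing the first slot of the bimodule and viewing the result as a right $\mathcal B$-module, the representable factor sits at $A_0$, not $A_k$ --- so the bar terms are built from $y_{F(A_0)}$ rather than $y_{F(A_k)}$; this does not affect the substance of the argument.
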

\begin{proof}
	Let $M\in\mathrm{Mod}\text{-}\mathcal B$. By \cite[Corollary 1.4.6]{DrinfeldGaitsgory2015}, it suffices to show that $M\cong0$ whenever $M(F(A))$ is acyclic for all $A\in\mathcal A$. This follows from full faithfulness of $F^*$ and the fact that $\mathcal A$ compactly generates $\mathrm{Mod}\text{-}\mathcal A$.
\end{proof}

\begin{conj}\label{conj:epis transfer E_n structure}
	Let $F\colon\mathcal A\to\mathcal B$ be a homological epimorphism of $A_\infty$-algebras. If $\tilde{\mathcal A}$ is an $E_n$-algebra enhancing $A$, then there is an $E_n$-algebra $\tilde{\mathcal B}$ enhancing $\mathcal B$ and a map $\tilde F\colon\tilde{\mathcal A}\to\tilde{\mathcal B}$ enhancing $F$.
\end{conj}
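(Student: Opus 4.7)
The approach I propose uses Dunn additivity together with the symmetric monoidality of extension of scalars along a homological epimorphism. By Dunn additivity, an $E_n$-algebra in chain complexes is equivalent to an $E_{n-1}$-algebra object in the $\infty$-category $\mathrm{Alg}_{E_1}$ of $A_\infty$-algebras. So $\tilde{\mathcal A}$ should be viewed as an $E_{n-1}$-algebra object of $\mathrm{Alg}_{E_1}$, and the goal becomes producing $\tilde{\mathcal B}$ as an $E_{n-1}$-algebra in $\mathrm{Alg}_{E_1}$ together with an $E_{n-1}$-map $\tilde{\mathcal A}\to\tilde{\mathcal B}$ whose underlying $E_1$-morphism is $F$.

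The key algebraic input I would establish is that for a homological epimorphism $F$, the base-change functor $F_!$ on $A_\infty$-algebras under $\mathcal A$ is strongly (not merely laxly) symmetric monoidal. Concretely, the bimodule characterization $\Gamma^\dagger(F)\otimes_{\mathcal A}\Gamma(F)\simeq\Delta_{\mathcal B}$ translates into $\mathcal B\otimes_{\mathcal A}\mathcal B\simeq\mathcal B$, i.e. $\mathcal B$ is an idempotent $\mathcal A$-algebra. Standard arguments then yield a canonical equivalence $(M\otimes_{\mathcal A}\mathcal B)\otimes_{\mathcal B}(N\otimes_{\mathcal A}\mathcal B)\simeq(M\otimes_{\mathcal A}N)\otimes_{\mathcal A}\mathcal B$ for $\mathcal A$-bimodules $M,N$, which upgrades $F_!$ to a strong symmetric monoidal $\infty$-functor on the relevant categories of algebras.

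Armed with strong monoidality, I would apply $F_!$ to $\tilde{\mathcal A}$ regarded as an $E_{n-1}$-algebra object to obtain $\tilde{\mathcal B}:=F_!(\tilde{\mathcal A})$ as an $E_{n-1}$-algebra in $\mathrm{Alg}_{E_1}$, hence an $E_n$-algebra after applying Dunn additivity in reverse. The unit of the adjunction $F_!\dashv F^*$ provides the desired $E_n$-morphism $\tilde F\colon\tilde{\mathcal A}\to\tilde{\mathcal B}$. That the underlying $E_1$-algebra of $\tilde{\mathcal B}$ is $\mathcal B$ itself follows from the fact that the forgetful functor from $E_{n-1}$-algebras in $\mathrm{Alg}_{E_1}$ down to $\mathrm{Alg}_{E_1}$ commutes on underlying objects with the base-change functor.

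The main obstacle is setting up the relative symmetric monoidal framework carefully enough that $F_!$ genuinely acts on $E_n$-algebras and not merely on the underlying modules. This will likely require working in Lurie's formalism of $\infty$-operads, presenting the relevant symmetric monoidal $\infty$-categories as $\infty$-operads over $E_\infty$, and invoking the monoidal localization machinery for $\infty$-operad maps. A more concrete secondary obstacle is verifying that the coherence data witnessing strong monoidality of $F_!$ is compatible with the entire tower of operadic multiplications encoding the $E_n$-structure; the moral content is the single bimodule equivalence $\mathcal B\otimes_{\mathcal A}\mathcal B\simeq\mathcal B$, but turning this into all the required higher coherences is where real work would be needed.
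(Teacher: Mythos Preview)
The paper states this as a \emph{conjecture} and offers no proof; the only accompanying remark is that it is the homotopy analog of the classical fact that ring epimorphisms with commutative source have commutative target. So there is no argument in the paper to compare your proposal against.

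That said, your proposal deserves scrutiny on its own terms, and there is a genuine gap. You invoke two monoidal structures and silently identify them. Dunn additivity realizes $\tilde{\mathcal A}$ as an $E_{n-1}$-algebra in $(\mathrm{Alg}_{E_1},\otimes_{\mathbb k})$, where the tensor is over the ground field. The idempotency you correctly extract from the homological-epimorphism condition, $\mathcal B\otimes_{\mathcal A}\mathcal B\simeq\mathcal B$, is a statement about the \emph{relative} tensor $\otimes_{\mathcal A}$ on $\mathcal A$-bimodules. Your functor $F_!$ is strongly monoidal for the latter structure, not the former; applying it to $\tilde{\mathcal A}$ as an object of $\mathrm{BMod}_{\mathcal A}$ just returns the monoidal unit $\mathcal B\in\mathrm{BMod}_{\mathcal B}$, which is vacuously $E_\infty$ and carries no information about an $E_n$-structure over $\mathbb k$. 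The phrase ``$F_!$ on $A_\infty$-algebras under $\mathcal A$'' does not name a functor that is simultaneously symmetric monoidal for $\otimes_{\mathbb k}$ and sends $\tilde{\mathcal A}$ to $\mathcal B$.

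A route closer to what you seem to intend: since $\tilde{\mathcal A}$ is $E_n$, the category $\mathrm{LMod}_{\tilde{\mathcal A}}$ is $E_{n-1}$-monoidal, and $\mathcal B$ sits inside it as an idempotent algebra; Lurie's results on idempotent objects then promote $\mathcal B$ to an $E_{n-1}$-algebra \emph{in $\mathrm{LMod}_{\tilde{\mathcal A}}$}. The missing step is a relative Dunn statement identifying $E_{n-1}$-algebras in $\mathrm{LMod}_{\tilde{\mathcal A}}$ with $E_n$-algebras under $\tilde{\mathcal A}$ in $\mathrm{Mod}_{\mathbb k}$. For $n=\infty$ this is standard, but for finite $n$ it is exactly the kind of coherence you flag as ``the main obstacle,'' and it is not obviously available in the literature. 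That is presumably why the author left it as a conjecture.
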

This is the homotopy version of the classical fact that ring epimorphisms with commutative domain have commutative codomain \cite{Silver1967}.

\subsection{$A_n$ sectors}

Returning to symplectic geometry, we consider a class of generalized stabilizations
\[
\bar F\langle n\rangle=\left(\bar F\times\C\right)\setminus\left(\sigma_0\cup\dotsc\cup\sigma_n\right)
\]
associated to a Liouville manifold $\bar F$, where $\sigma_j$ has divisor $\bar F\times\left\{e^{\frac{2\pi j}n}\right\}$. In this case, $\bar F\langle1\rangle\cong\Sigma F$, and in general we think of $\bar F\langle n\rangle$ as describing $A_n$ quiver representations in $\bar F$. To justify this viewpoint, we have

\begin{prop}\label{prop:A_n sector presentation}
	Suppose $\bar F$ satisfies stop removal, and let $n\ge1$. Fix a graded inclusion $i_0\colon\Sigma\bar F\to\bar F\langle n\rangle$ at $\sigma_0$, and take the corresponding graded inclusion $i_j$ at $\sigma_j$ to be the minimal counterclockwise rotation of $i_0$. Then the following hold.
	\begin{enumerate}[a)]
		\item\label{A_n Orlov ff} The resulting Orlov functors $\imath_j$ are fully faithful.
		\item\label{A_n forward stopped} $\hom_{\W(\bar F\langle n\rangle)}(\imath_jL,\imath_{j+k}L')$ is acyclic for $k\ne0,1$, where the indices are cyclically ordered.
		\item\label{A_n iterated cone} For $j=0,\dotsc,n-1$, there are natural transformations
		\[
		R_j\colon \imath_j\to\imath_{j+1}[1]
		\]
		for which $\imath_n[1]$ is isomorphic to the twisted complex
		\[
		\tw\left(\imath_0\xrightarrow{R_0}\imath_1\xrightarrow{R_1}\dotsm\xrightarrow{R_{n-2}}\imath_{n-1}\right).
		\]
	\end{enumerate}
\end{prop}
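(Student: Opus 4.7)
The plan is to establish parts (a) and (b) through the forward-stopped inclusions of Proposition \ref{prop:forward stopped inclusions}, and to prove part (c) by constructing the $R_j$'s as continuation maps and using iterated stop removal.

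For part (a), each inclusion $i_j\colon\Sigma\bar F\to\bar F\langle n\rangle$ is forward stopped by a Weinstein hypersurface $W\subset\partial_\infty\bar F\langle n\rangle\setminus\partial_\infty\Sigma\bar F$ consisting of a neighborhood of $\bigcup_{\ell\ne j}\sigma_\ell$ connected by arcs, together with a Reeb vector field rotating counterclockwise in the $\C$ factor: this vector field exits $\partial_\infty\Sigma\bar F$ at $\sigma_j^+$ and is absorbed into each successive stop. Proposition \ref{prop:forward stopped inclusions}(\ref{item:forward stopped ff}) yields that $i_{j*}$ is fully faithful. Combined with the stop removal hypothesis for $\bar F$, which makes $\Perf\,\Sigma_*$ a quasi-equivalence onto its image, we conclude that $\imath_j=i_{j*}\circ\Sigma_*$ is fully faithful.

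For part (b), shrink the forward stopping region: for $k\ne0,1$ in the cyclic order, replace $W$ by a smaller $W'$ consisting only of the arc from $\sigma_j^+$ counterclockwise up to $\sigma_{j+1}^-$ together with a neighborhood of $\sigma_{j+1}$, so that $\partial^+W'\subset\sigma_{j+1}$. Then $\imath_{j+k}L'$ is disjoint from $i_j(\Sigma\bar F)$, and $\partial_\infty\imath_{j+k}L'$ (located near $\sigma_{j+k}$, which is non-adjacent to $\sigma_j$) is disjoint from $W'$. Proposition \ref{prop:forward stopped inclusions}(\ref{item:forward stopped disjoint}) then gives $\hom_{\W(\bar F\langle n\rangle)}(\imath_jL,\imath_{j+k}L')\simeq0$. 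For the backward cyclic direction ($k=-1$ and the like), the same argument applies after reversing orientation.

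For part (c), define $R_j\in\hom^0(\imath_jL,\imath_{j+1}L[1])$ as the continuation map for a linear wrapping Hamiltonian that rotates $\imath_jL$ counterclockwise just past $\sigma_{j+1}$; the $[1]$-shift records the grading change induced by a rotation past a single stop divisor, matching the convention established for two-stop sectors in the discussion of the Viterbo sector. To prove the twisted-complex identity, iterate stop removal: successively quotienting $\W(\bar F\langle n\rangle)$ by the subcategories $\B_{\sigma_1},\dotsc,\B_{\sigma_{n-1}}$ produces the wrapped Fukaya category of the sector having only $\sigma_0$ and $\sigma_n$ as stops, in which $\imath_0$ and $\imath_n[1]$ are isomorphic. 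Lifting this isomorphism back through the iterated Lyubashenko--Ovsienko quotient --- using that a morphism becoming an isomorphism in such a quotient admits a canonical cone-resolution with terms in the quotiented subcategory --- produces the desired twisted complex, with connecting morphisms identifiable with the $R_j$'s. The main obstacle is verifying this last identification: comparing the algebraic cone-resolution coming from the bar complex of the Lyubashenko--Ovsienko quotient with the geometrically defined continuation morphisms $R_j$ requires either careful tracking of homotopies through each quotient step or a direct geometric construction of the iterated cone via wrapping that passes successively through each intermediate stop.
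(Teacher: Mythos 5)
Parts (a) and (b) follow the same route as the paper (forward-stoppedness via Proposition \ref{prop:forward stopped inclusions}); a minor point is that $\Sigma_*$ is always fully faithful, so stop removal is not actually needed for (a). The substantive divergence is in (c), and there the proposal has a genuine gap, which you have honestly flagged but not resolved.

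Your strategy for (c) is to iterate stop removal to see that $\imath_0$ and $\imath_n[1]$ become isomorphic after quotienting by $\B_{\sigma_1},\dotsc,\B_{\sigma_{n-1}}$, and then ``lift back'' to get a twisted complex. The problem is that such a lift only tells you $\cone(\imath_0\to\imath_n[1])$ lies in the thick subcategory generated by $\B_{\sigma_1},\dotsc,\B_{\sigma_{n-1}}$; it does not produce the specific linear twisted complex $\tw(\imath_0\xrightarrow{R_0}\dotsm\xrightarrow{R_{n-2}}\imath_{n-1})$ nor identify the connecting morphisms with your continuation-map $R_j$'s. Tracking the bar-resolution through the Lyubashenko--Ovsienko quotients to extract this is exactly the ``obstacle'' you name, and there is no general mechanism that closes it.

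The paper sidesteps this entirely. First, $R_j$ is not defined as a continuation map for a wrapping Hamiltonian; instead one passes to the larger sector $\Sigma_j\bar F=(\bar F\times\C)\setminus(\sigma_j\cup\sigma_{j+1})$, where $i_j$ and $i_{j+1}[1]$ are \emph{isotopic} inclusions, so Proposition \ref{prop:isotopic inclusions} gives an isomorphism $R_j^+\colon\imath_j^+\to\imath_{j+1}^+[1]$. Since for suitable Floer data $\hom(\imath_j,\imath_{j+1}[1])$ computed in $\bar F\langle n\rangle$ is literally \emph{equal} to its counterpart in $\Sigma_j\bar F$, one gets $R_j$ by lifting $R_j^+$. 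Second, rather than constructing the resolution, the paper proves that the longer complex $\imath_T=\tw(\imath_0\xrightarrow{R_0}\dotsm\xrightarrow{R_{n-1}}\imath_n)$ is the zero functor by testing it against $\imath_jL$ for $j=0,\dotsc,n-1$, which split-generate by stop removal. Part (b) collapses $\hom(\imath_jL,\imath_TL')$ to the two-term piece $\hom(\imath_jL,\tw(\imath_jL'\to\imath_{j+1}L'))$, which again may be computed in $\Sigma_j\bar F$, where it is acyclic precisely because $R_j^+$ is an isomorphism. This ``test on generators, reduce via (b), move to $\Sigma_j\bar F$'' maneuver is what replaces the resolution-lifting argument, and it is the missing idea in your proposal.
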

\begin{proof}
	\eqref{A_n Orlov ff} and \eqref{A_n forward stopped} each follow from Proposition \ref{prop:forward stopped inclusions}, since all the inclusions $i_j$ are forward stopped.
	
	\begin{figure}
		\def\svgwidth{6cm}
\begingroup%
  \makeatletter%
  \providecommand\color[2][]{%
    \errmessage{(Inkscape) Color is used for the text in Inkscape, but the package 'color.sty' is not loaded}%
    \renewcommand\color[2][]{}%
  }%
  \providecommand\transparent[1]{%
    \errmessage{(Inkscape) Transparency is used (non-zero) for the text in Inkscape, but the package 'transparent.sty' is not loaded}%
    \renewcommand\transparent[1]{}%
  }%
  \providecommand\rotatebox[2]{#2}%
  \newcommand*\fsize{\dimexpr\f@size pt\relax}%
  \newcommand*\lineheight[1]{\fontsize{\fsize}{#1\fsize}\selectfont}%
  \ifx\svgwidth\undefined%
    \setlength{\unitlength}{307.25198865bp}%
    \ifx\svgscale\undefined%
      \relax%
    \else%
      \setlength{\unitlength}{\unitlength * \real{\svgscale}}%
    \fi%
  \else%
    \setlength{\unitlength}{\svgwidth}%
  \fi%
  \global\let\svgwidth\undefined%
  \global\let\svgscale\undefined%
  \makeatother%
  \begin{picture}(1,1.04884378)%
    \lineheight{1}%
    \setlength\tabcolsep{0pt}%
    \put(0,0){\includegraphics[width=\unitlength,page=1]{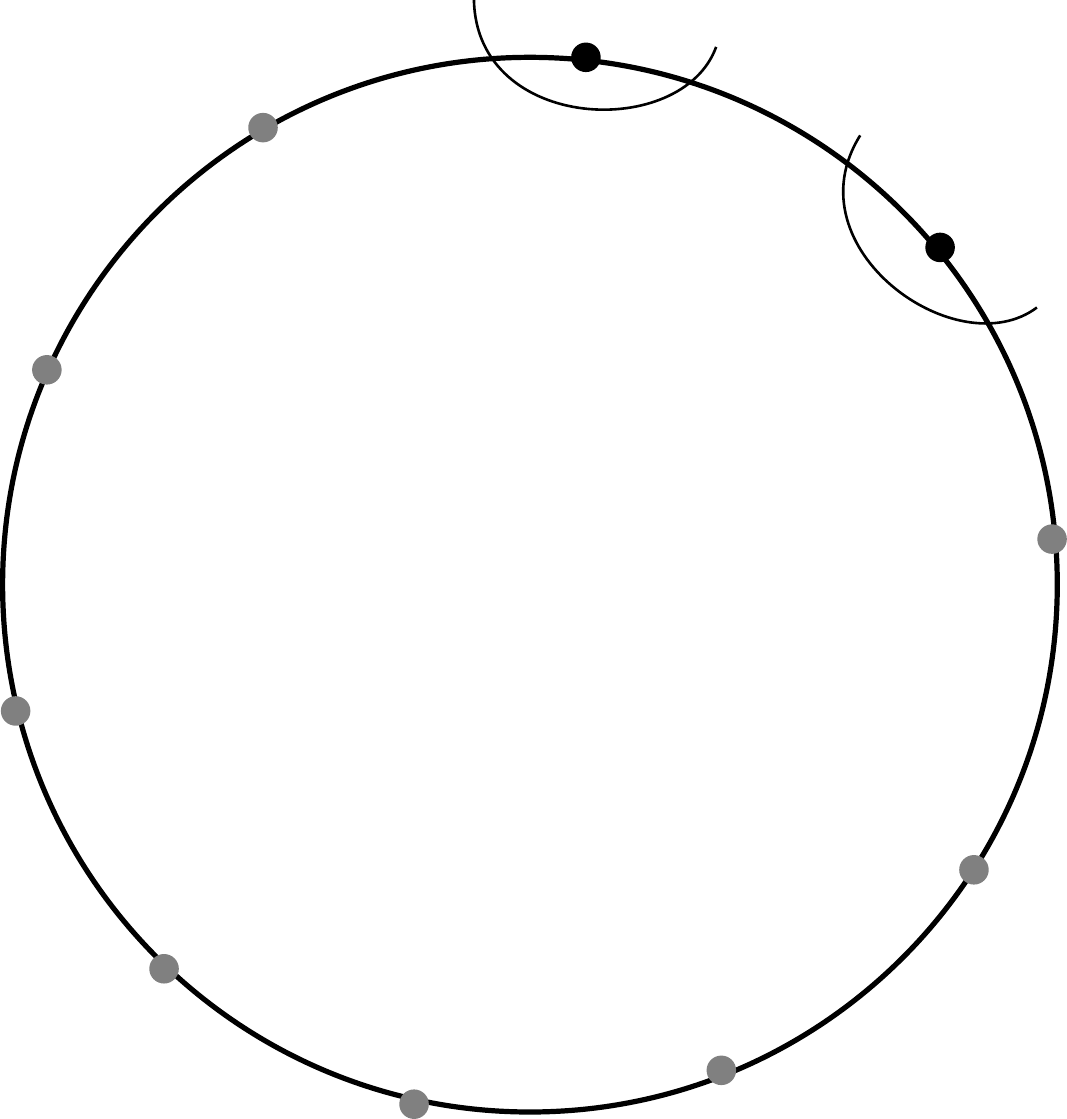}}%
    \put(0.42961475,0.91944729){\color[rgb]{0,0,0}\makebox(0,0)[lt]{\lineheight{1.25}\smash{\begin{tabular}[t]{l}$i_{j+1}$\end{tabular}}}}%
    \put(0.83970164,0.71196285){\color[rgb]{0,0,0}\makebox(0,0)[lt]{\lineheight{1.25}\smash{\begin{tabular}[t]{l}$i_j$\end{tabular}}}}%
  \end{picture}%
\endgroup%

		\caption{$i_j$ and $i_{j+1}$ become isotopic up to shift after removing the extra stops.}\label{fig:partial A_n}
	\end{figure}
	
	For \eqref{A_n iterated cone}, note that $i_j$ is isotopic to $i_{j+1}[1]$ in the larger sector $\Sigma_j\bar F=(\bar F\times\C)\setminus(\sigma_j\cup\sigma_{j+1})$, as illustrated in Figure \ref{fig:partial A_n}. By Proposition \ref{prop:isotopic inclusions}, there is a natural isomorphism
	\[
	R_j^+\colon \imath_j^+\to\imath_{j+1}^+[1],
	\]
	where $\imath_j^+$ and $\imath_{j+1}^+$ are the Orlov functors into $\Sigma_j\bar F$. On the other hand, the tautological inclusion
	\[
	\W(\bar F\langle n\rangle)\into\W(\Sigma_j\bar F)
	\]
	induces an inclusion of chain complexes
	\begin{equation}\label{A_n equality of natural transformations}
	\hom(\imath_j,\imath_{j+1}[1])\into\hom(\imath_j^+,\imath_{j+1}^+[1]),
	\end{equation}
	and for sufficiently nice Hamiltonians \eqref{A_n equality of natural transformations} is an \emph{equality}. $R_j$ is now the lift of $R_j^+$ under this identification.
	
	For the isomorphism, it suffices to prove that the larger twisted complex
	\[
	\imath_T=\tw\left(\imath_0\xrightarrow{R_0}\dotsm\xrightarrow{R_{n-1}}\imath_n\right)
	\]
	is the zero functor. By stop removal, the objects $\imath_jL$ for $j=0,\dotsc,n-1$ split-generate $\W(\bar F\langle n\rangle)$, so it is in fact enough to check that $\hom_{\W(\bar F\langle n\rangle)}(\imath_jL,\imath_TL')$ is acyclic for those $j$ and all $L,L'$. By \eqref{A_n forward stopped},
	\[
	\hom_{\W(\bar F\langle n\rangle)}(\imath_jL,\imath_TL')\cong\hom_{\W(\bar F\langle n\rangle)}\left(\imath_jL,\tw\left(\imath_jL'\xrightarrow{R_j(L')}\imath_{j+1}L'\right)\right).
	\]
	On the other hand, as with \eqref{A_n equality of natural transformations},
	\[
	\hom_{\W(\bar F\langle n\rangle)}\left(\imath_jL,\tw\left(\imath_jL'\xrightarrow{R_j(L')}\imath_{j+1}L'\right)\right)\cong\hom_{\W(\Sigma_j\bar F)}\left(\imath^+_jL,\tw\left(\imath^+_jL'\xrightarrow{R^+_j(L')}\imath^+_{j+1}L'\right)\right),
	\]
	and this last complex is acyclic because $R_j^+$ is an isomorphism.
\end{proof}

\begin{rmk}\label{rmk:A_n formula better proof}
	Proposition \ref{prop:A_n sector presentation} is well known to experts. A proof at the level of objects appears in \cite{GanatraPardonShende2018}, and a complete description of $\W(\bar F\langle n\rangle)$ appears in \cite{Tanaka2019}. A conceptually simpler proof in the same spirit would be to establish the result for $\bar F=\mathrm{pt}$ and obtain the general case by K\"unneth. This would have the additional benefit of not requiring stop removal. However, such a proof would require compatibility of the pushforward and K\"unneth functors, which is beyond the scope of this paper.
\end{rmk}

\subsection{Gluing}\label{sec:gluing}

We now move to the situation where we have
\begin{itemize}
	\item two sectors $M$ and $N$,
	\item chosen stops $\sigma_M$ of $M$ and $\sigma_N$ of $N$ with fibers $\bar F_M$ and $\bar F_N$, respectively, and 
	\item a Liouville isomorphism $\bar F_M\cong\bar F_N$.
\end{itemize}
After possibly modifying $N$ by a trivial inclusion, we may arrange that $\bar F_M=\bar F_N=\bar F$, or in other words that the isomorphism preserves the Liouville form on the nose. This does not change any of the Fukaya categories, and by Proposition \ref{prop:isotopic inclusions} it does not change and of the Orlov functors either.

This done, we can glue $M$ and $N$ along $\bar F\times\C_{\lvert\Re\rvert\le1}\cong\Sigma F$ to obtain the glued sector $M\glu{\sigma_M}{\sigma_N}N$, where $\sigma_M(p,z)$ is identified with $(p,z)$ and $\sigma_N(p,z)$ is identified with $(p,-z)$. When the stops are clear from context, we will shorten the notation to $M\cup N$, not to be confused with the disjoint union $M\amalg N$.

Alternatively, we can be more fancy and form the \emph{stopped gluing}
\[
M\sglu{\sigma_M}{\sigma_N}N:=M\glu{\sigma_M}{\sigma_2}\bar F\langle 3\rangle\glu{\sigma_0}{\sigma_N}N.
\]
Our first order of business is to understand $\W(M\sglu{}{}N)$, or at least the image of the pushforward functors associated to the inclusions of $M$, $N$, and $\bar F\langle3\rangle$.

\begin{figure}
	\def\svgwidth{15.5cm}
\begingroup%
  \makeatletter%
  \providecommand\color[2][]{%
    \errmessage{(Inkscape) Color is used for the text in Inkscape, but the package 'color.sty' is not loaded}%
    \renewcommand\color[2][]{}%
  }%
  \providecommand\transparent[1]{%
    \errmessage{(Inkscape) Transparency is used (non-zero) for the text in Inkscape, but the package 'transparent.sty' is not loaded}%
    \renewcommand\transparent[1]{}%
  }%
  \providecommand\rotatebox[2]{#2}%
  \newcommand*\fsize{\dimexpr\f@size pt\relax}%
  \newcommand*\lineheight[1]{\fontsize{\fsize}{#1\fsize}\selectfont}%
  \ifx\svgwidth\undefined%
    \setlength{\unitlength}{462.83806942bp}%
    \ifx\svgscale\undefined%
      \relax%
    \else%
      \setlength{\unitlength}{\unitlength * \real{\svgscale}}%
    \fi%
  \else%
    \setlength{\unitlength}{\svgwidth}%
  \fi%
  \global\let\svgwidth\undefined%
  \global\let\svgscale\undefined%
  \makeatother%
  \begin{picture}(1,0.25964934)%
    \lineheight{1}%
    \setlength\tabcolsep{0pt}%
    \put(0,0){\includegraphics[width=\unitlength,page=1]{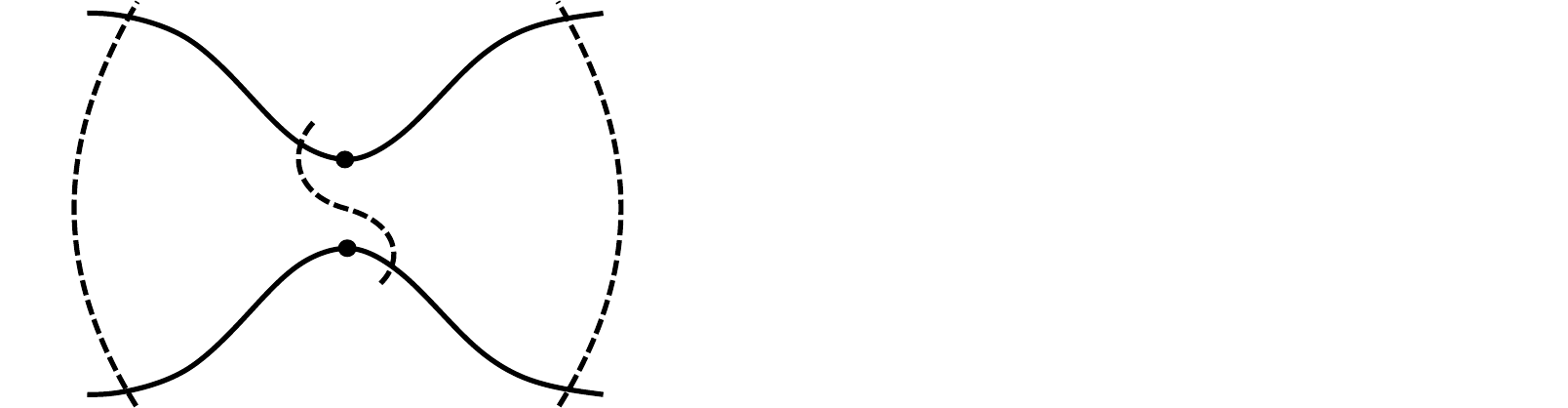}}%
    \put(-0.00143476,0.10739887){\color[rgb]{0,0,0}\makebox(0,0)[lt]{\lineheight{1.25}\smash{\begin{tabular}[t]{l}$N$\end{tabular}}}}%
    \put(0.40488986,0.10294269){\color[rgb]{0,0,0}\makebox(0,0)[lt]{\lineheight{1.25}\smash{\begin{tabular}[t]{l}$M$\end{tabular}}}}%
    \put(0.25337897,0.11198214){\color[rgb]{0,0,0}\makebox(0,0)[lt]{\lineheight{1.25}\smash{\begin{tabular}[t]{l}$\bar F$\end{tabular}}}}%
    \put(0,0){\includegraphics[width=\unitlength,page=2]{grothendieck_vs_cocomma.pdf}}%
    \put(0.55818614,0.1077188){\color[rgb]{0,0,0}\makebox(0,0)[lt]{\lineheight{1.25}\smash{\begin{tabular}[t]{l}$N$\end{tabular}}}}%
    \put(0.96451074,0.10326262){\color[rgb]{0,0,0}\makebox(0,0)[lt]{\lineheight{1.25}\smash{\begin{tabular}[t]{l}$M$\end{tabular}}}}%
    \put(0.81299978,0.13751016){\color[rgb]{0,0,0}\makebox(0,0)[lt]{\lineheight{1.25}\smash{\begin{tabular}[t]{l}$\bar F$\end{tabular}}}}%
    \put(0,0){\includegraphics[width=\unitlength,page=3]{grothendieck_vs_cocomma.pdf}}%
  \end{picture}%
\endgroup%

	\caption{Grothendieck construction on the left, mutated diagram on the right. The arrows represent Reeb flow.}\label{fig:grothendieck vs cocomma}
\end{figure}

\begin{rmk}
	Ganatra, Pardon, and Shende also study the stopped gluing in their proof of the gluing formula \cite{GanatraPardonShende2018}. In their case, they identify $\W(M\sglu{}{}N)$ with the Grothendieck construction of the span
	\[
	\W(M)\xleftarrow{\imath_{\sigma_M}}\W(\bar F)\xrightarrow{\imath_{\sigma_N}}\W(N).
	\]
	In our case, we will use a semiorthogonal presentation associated to the mutated bimodule diagram
	\begin{equation}\label{stopped gluing bimodule diagram}
	\W(M)\xrightarrow{(\imath_{\sigma_M})^\dagger}\W(\bar F)\xrightarrow{\imath_{\sigma_N}}\W(N).
	\end{equation}
	Geometrically, the difference in presentation comes from privileging a different collection of objects in $\W(\bar F\langle3\rangle)$, see Figure \ref{fig:grothendieck vs cocomma}. This leads to formulas which are equivalent but better suited to our purposes.
\end{rmk}

Explicitly, the semiorthogonal gluing of \eqref{stopped gluing bimodule diagram} has the form
\[
\overset{\stoppedarrow}{\mathcal C}=\begin{vmatrix}\W(M)&\Gamma(\imath_{\sigma_M})&\mathbf 0\\
&\W(\bar F)&\Gamma^\dagger(\imath_{\sigma_N})\\
&&\W(N)
\end{vmatrix}.
\]

\begin{prop}\label{prop:stopped gluing formula}
	If $\bar F$ satisfies stop removal, then there is a fully faithful functor $\overset{\stoppedarrow}{\Phi}\colon\overset{\stoppedarrow}{\mathcal C}\to\W(M\sglu{}{}N)$ which agrees with the pushforward functors on $\W(M)$ and $\W(N)$.
\end{prop}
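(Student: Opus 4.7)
The plan is to construct $\overset{\stoppedarrow}{\Phi}$ via the universal property of semiorthogonal gluings (Lemma~\ref{prop:K-L}), and then verify full faithfulness component by component, using forward-stoppedness of the pushforwards together with the $A_3$-quiver structure of the middle region $\bar F\langle 3\rangle$ (Proposition~\ref{prop:A_n sector presentation}).

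For the three component functors, I take $F_M := \imath_{*M}$ and $F_N := \imath_{*N}$ to be the pushforwards along the canonical sector inclusions; both inclusions are forward-stopped by the residual stop $\sigma_1 \subset \bar F\langle 3\rangle$, so both functors are fully faithful by Proposition~\ref{prop:forward stopped inclusions}(\ref{item:forward stopped ff}). For $F_{\bar F}$ I take the Orlov functor $\imath_{\sigma_1}: \W(\bar F) \to \W(M\sglu{}{}N)$ at the residual stop, which is fully faithful for the same reason. The mixed bimodule morphism on the $(M, \bar F)$ piece is constructed by choosing Floer data on $M\sglu{}{}N$ compatible with the $\bar F \times T^*[0,1]$ product structure near the attachment locus between $M$ and $\bar F\langle 3\rangle$, as in Construction~\ref{const:orlov via ham}: with such data, up to a fixed grading shift, perturbed holomorphic curves from $F_M X_M$ to $\imath_{\sigma_1} Y_{\bar F}$ in the glued sector are in natural bijection with those from $X_M$ to $\imath_{\sigma_M} Y_{\bar F}$ in $\W(M)$, since the contact-type almost complex structure forces them into the product neighborhood. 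The $(\bar F, N)$ bimodule morphism is constructed symmetrically. The $(M, N)$ bimodule is $\mathbf 0$, which is consistent because $\hom_{\W(M\sglu{}{}N)}(F_M X_M, F_N Y_N) \simeq 0$ for appropriate Floer data by Proposition~\ref{prop:forward stopped inclusions}(\ref{item:forward stopped disjoint}), applied to the forward-stopped inclusion of $M$ with $L = F_N Y_N$ disjoint from $M$.

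Full faithfulness then follows piece by piece: the diagonal pieces are immediate from full faithfulness of $F_M$, $F_N$, and $F_{\bar F}$; the mixed $(M, \bar F)$ and $(\bar F, N)$ pieces follow from the curve bijection becoming a chain-level isomorphism; and the $(M, N)$ piece follows from the same Proposition~\ref{prop:forward stopped inclusions}(\ref{item:forward stopped disjoint}) argument. The main obstacle is verifying that the constructed bimodule morphisms are closed morphisms of $A_\infty$-bimodules, i.e., that the curve bijection extends coherently to all higher-order coefficients and satisfies the bimodule structure equations. For this I would carefully choose Floer data on $M\sglu{}{}N$ so that the perturbed holomorphic curves defining the higher $A_\infty$ bimodule maps factor through the product region $\bar F \times T^*[0,1]$ and reduce to curves computing the ordinary $A_\infty$ operations in $\W(M)$ or $\W(N)$. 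The stop removal hypothesis on $\bar F$ enters implicitly throughout: it is required by Proposition~\ref{prop:A_n sector presentation}, which governs the behavior of the Orlov functors $\imath_j$ into $\bar F\langle 3\rangle$ and hence, after pushforward, underpins the geometric shifts and identifications needed for the curve bijection.
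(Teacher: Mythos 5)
Your choice of $F_{\bar F}$ is wrong in direction, and this error propagates. After the stopped gluing $M\glu{\sigma_M}{\sigma_2}\bar F\langle3\rangle\glu{\sigma_0}{\sigma_N}N$, there are \emph{two} residual stops, $\sigma_1$ and $\sigma_3$, not one. The Reeb flow runs counterclockwise through $\sigma_0(\to N)\to\sigma_1\to\sigma_2(\to M)\to\sigma_3\to\sigma_0$, so $i_M$ is forward stopped into $\sigma_3$ (not $\sigma_1$), and $i_N$ is forward stopped into $\sigma_1$. The semiorthogonal structure of $\overset{\stoppedarrow}{\mathcal C}$ requires morphisms to flow $\W(M)\to\W(\bar F)\to\W(N)$, and by Proposition~\ref{prop:A_n sector presentation}\eqref{A_n forward stopped} the only Orlov functor whose image sits on both the receiving end of $M$-wrapping and the sending end of $N$-wrapping is $\imath_3$. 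Placing $F_{\bar F}$ at $\sigma_1$ produces the \emph{opposite} semiorthogonal ordering $\W(N)\to\W(\bar F)\to\W(M)$, so the gluing bimodules would not match $\Gamma(\imath_{\sigma_M})$ and $\Gamma^\dagger(\imath_{\sigma_N})$, and no functor $\overset{\stoppedarrow}{\Phi}$ with the advertised source and target can be built this way. You also omit the necessary grading shift.

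The second, independent problem is that you never actually solve what you flag as ``the main obstacle'' --- producing a \emph{closed} bimodule morphism. Matching Floer data and claiming a bijection of holomorphic curves in the product neighborhood is not a construction; verifying that all higher $A_\infty$ bimodule coefficients reduce to those of $\W(M)$ or $\W(N)$ is precisely the hard geometric input you would need to supply, and nothing in the proposal does it. The paper avoids this entirely by making an \emph{algebraic} definition: it takes $F_{\bar F}$ to be $(i_{\bar F\langle3\rangle})_*\circ\tw\left(\imath_0\xrightarrow{R_0}\imath_1\xrightarrow{R_1}\imath_2\right)[-1]$, which is isomorphic to $\imath_3$ by Proposition~\ref{prop:A_n sector presentation}\eqref{A_n iterated cone} but has the advantage of coming with a manifest degree-zero inclusion of $\imath_2[-1]$ as a subobject. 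The bimodule map on $\Gamma(\imath_{\sigma_M})$ is then the composition: push forward by $(i_M)_*$, apply the isotopy isomorphism $(i_M)_*\circ\imath_{\sigma_M}\cong(i_{\bar F\langle3\rangle})_*\circ\imath_2[-1]$ from Proposition~\ref{prop:isotopic inclusions}, and include into the twisted complex. Closedness is automatic because every piece is a natural transformation or pushforward. Full faithfulness on off-diagonal morphisms then reduces to two acyclicity statements from Proposition~\ref{prop:forward stopped inclusions}, again formal. If you want to salvage your approach, you should (a) use $\sigma_3$ with the correct shift, (b) use the twisted-complex model for $F_{\bar F}$, and (c) replace the holomorphic-curve bijection with the algebraic definition via Proposition~\ref{prop:isotopic inclusions} and the natural transformations $R_j$.
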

\begin{rmk}
	The only use of stop removal in the proof of Proposition \ref{prop:stopped gluing formula} comes from invoking Proposition \ref{prop:A_n sector presentation}, so by Remark \ref{rmk:A_n formula better proof} it should not be viewed as an essential hypothesis for the stopped gluing formula. On the other hand, our main interest will be in the ``directed gluing'' formula of Corollary \ref{cor:directed gluing formula} below, where it is indeed essential, so for our purposes we lose nothing by requiring it at this stage.
\end{rmk}
\begin{proof}
	To begin, note that essentially everything in sight is forward stopped. To be more precise, the inclusion $i_M$ is forward stopped into $\sigma_3$, $i_N$ is forward stopped into $\sigma_1$, and $i_3$ is forward stopped into $\sigma_1\cup\sigma_3$. In each case, this can be seen by extending the inclusions of $\Sigma\bar F$ associated to the respective Orlov functors to inclusions of $\bar F\times U$, where $U\subset T^*\R$ is a neighborhood of the zero section, see Figure \ref{fig:sample forward stopping}. A wrapping Hamiltonian extending $H_\Sigma$ from Construction \ref{const:orlov via ham} will preserve the positive half $\bar F\times U_{p\ge0}\subset\bar F\times U$, which provides the edge of the stopping submanifold $W$. As a consequence, we can choose wrapping Hamiltonians with no chords starting in $M$ and ending in $N$, and we choose to present $\W(M\sglu{}{}N)$ using such Hamiltonians.
	
	On objects, $\overset{\stoppedarrow}{\Phi}$ is given by
	\[
	\overset{\stoppedarrow}{\Phi}(L)=\begin{cases}(i_M)_*L&L\in\W(M)\\
	(i_{\langle\bar F\rangle})_*\tw\left(\imath_0\xrightarrow{R_1}\imath_1\xrightarrow{R_2}\imath_2\right)L[-1]&L\in\W(\bar F)\\
	(i_N)_*L&L\in\W(N).\end{cases}
	\]
	On diagonal morphisms, it is given by those same functors. On off-diagonal morphisms, it is induced by the map of $(\W(\bar F),\W(M))$-bimodules
	\begin{equation}\label{stopped gluing bimodule map}
	\begin{tikzcd}
	\hspace{-.5cm}\Gamma(\imath_{\sigma_M})=(\imath_{\sigma_M},\mathrm{Id})^*\Delta_{\W(M)}\ar[d,"(i_M)_*"]&\\
	\left((i_M)_*\circ\imath_{\sigma_M},(i_M)_*\right)^*\Delta_{\W(M\sglu{\sigma_M}{\sigma_N}N)}\ar[r,"\text{Prop }\ref{prop:isotopic inclusions}","\cong" below]&\left((i_{\bar F\langle3\rangle})_*\circ\imath_2[-1],(i_M)_*\right)^*\Delta_{\W(M\sglu{\sigma_M}{\sigma_N}N)}\ar[d]\\
	&\left((i_{\bar F\langle3\rangle})_*\circ\tw\left(\imath_0\to\imath_1\to\imath_2\right)[-1],(i_M)_*\right)^*\Delta_{\W(M\sglu{\sigma_M}{\sigma_N}N)}
	\end{tikzcd}
	\end{equation}
	and similarly for $\Gamma^\dagger(\imath_{\sigma_N})$. The oddness of the shift in the isomorphism $(i_M)_*\circ\imath_{\sigma_M}\cong(i_{\bar F\langle3\rangle})_*\circ\imath_2[-1]$ comes from the half-rotation of $\C_{\lvert\Re\rvert<1}$ in the construction of the gluing. As with the construction of the Viterbo transfer map, the precise value is a matter of convention -- other choices give presentations of the same (pretriangulated) category which differ by shifts. It is immediate that $\overset\stoppedarrow\Phi$ satisfies the equations of an $A_\infty$ functor, except possibly when applied to a word of morphisms starting in $\W(M)$ and ending in $\W(N)$. These terms vanish because we have chosen a model of $\W(M\sglu{}{}N)$ with genuinely no morphisms between branes in $M$ and branes in $N$.

	It remains to show that $\overset{\stoppedarrow}{\Phi}$ is fully faithful. By Propositions \ref{prop:forward stopped inclusions} and \ref{prop:A_n sector presentation}\eqref{A_n iterated cone}, $\overset{\stoppedarrow}{\Phi}$ is fully faithful on diagonal morphisms. For off-diagonal morphisms, we again consider only $\Gamma(\imath_{\sigma_M})$. There, Proposition \ref{prop:forward stopped inclusions} implies that the first vertical arrow of \eqref{stopped gluing bimodule map} is a quasi-isomorphism, and for the second vertical arrow we use the fact that $\hom\left((i_M)_*,\imath_0\right)$ and $\hom\left((i_M)_*,\imath_1\right)$ are acyclic, again by forward stoppedness.
	
	\begin{figure}
		\def\svgwidth{10cm}
\begingroup%
  \makeatletter%
  \providecommand\color[2][]{%
    \errmessage{(Inkscape) Color is used for the text in Inkscape, but the package 'color.sty' is not loaded}%
    \renewcommand\color[2][]{}%
  }%
  \providecommand\transparent[1]{%
    \errmessage{(Inkscape) Transparency is used (non-zero) for the text in Inkscape, but the package 'transparent.sty' is not loaded}%
    \renewcommand\transparent[1]{}%
  }%
  \providecommand\rotatebox[2]{#2}%
  \newcommand*\fsize{\dimexpr\f@size pt\relax}%
  \newcommand*\lineheight[1]{\fontsize{\fsize}{#1\fsize}\selectfont}%
  \ifx\svgwidth\undefined%
    \setlength{\unitlength}{181.55035798bp}%
    \ifx\svgscale\undefined%
      \relax%
    \else%
      \setlength{\unitlength}{\unitlength * \real{\svgscale}}%
    \fi%
  \else%
    \setlength{\unitlength}{\svgwidth}%
  \fi%
  \global\let\svgwidth\undefined%
  \global\let\svgscale\undefined%
  \makeatother%
  \begin{picture}(1,0.52146504)%
    \lineheight{1}%
    \setlength\tabcolsep{0pt}%
    \put(0,0){\includegraphics[width=\unitlength,page=1]{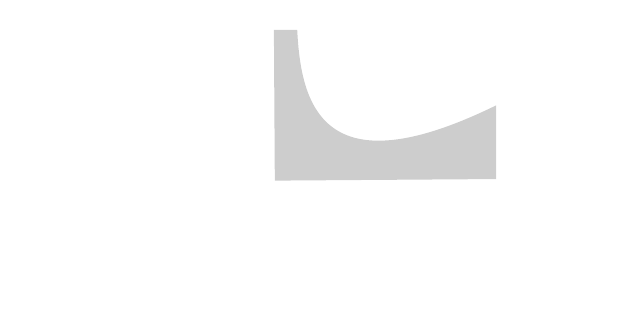}}%
    \put(-0.00365773,0.22112103){\color[rgb]{0,0,0}\makebox(0,0)[lt]{\lineheight{1.25}\smash{\begin{tabular}[t]{l}$N$\end{tabular}}}}%
    \put(0.90952493,0.21268172){\color[rgb]{0,0,0}\makebox(0,0)[lt]{\lineheight{1.25}\smash{\begin{tabular}[t]{l}$M$\end{tabular}}}}%
    \put(0,0){\includegraphics[width=\unitlength,page=2]{sample_forward_stopping.pdf}}%
    \put(0.46075287,0.257678){\color[rgb]{0,0,0}\makebox(0,0)[lt]{\lineheight{1.25}\smash{\begin{tabular}[t]{l}$W$\end{tabular}}}}%
    \put(0.36263957,0.48798603){\color[rgb]{0,0,0}\makebox(0,0)[lt]{\lineheight{1.25}\smash{\begin{tabular}[t]{l}$\sigma_3$\end{tabular}}}}%
  \end{picture}%
\endgroup%

		\caption{The forward stopping manifold for $i_M$.}\label{fig:sample forward stopping}
	\end{figure}
\end{proof}

When $\bar F$ satisfies stop removal, Proposition \ref{prop:stopped gluing formula} immediately gives a formula for the ordinary gluing. Indeed, $M\cup N$ is deformation equivalent to $\left(M\sglu{}{}N\right)\cup\left(\sigma_1\cup\sigma_3\right)$, and it is easy to determine the images of $\overset{\stoppedarrow}{\Phi}^{-1}\imath_1$ and $\overset{\stoppedarrow}{\Phi}^{-1}\imath_3$. In our case, it will be convenient to perform this stop removal in two steps, beginning with $\sigma_3$. The result is the \emph{directed gluing}
\[
M\dglu{\sigma_M}{\sigma_N}N:=M\glu{\sigma_M}{\sigma_2}\bar F\langle2\rangle\glu{\sigma_0}{\sigma_N}N.
\]

\begin{cor}\label{cor:directed gluing formula}
	Set
	\[
	\overset{\rightarrow}{\mathcal C}=\begin{vmatrix}
	\W(M)&\Gamma^\dagger(\imath_{\sigma_N})[1]\otimes_{\W(\bar F)}\Gamma(\imath_{\sigma_M})\\
	&\W(N)
	\end{vmatrix}.
	\]
	If $\bar F$ satisfies stop removal, then there is a fully faithful functor $\overset{\rightarrow}{\Phi}\colon\overset{\rightarrow}{\mathcal C}\to\W(M\dglu{}{}N)$ which agrees with the pushforward functors on $\W(M)$ and $\W(N)$.
\end{cor}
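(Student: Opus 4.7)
The plan is to derive the corollary from Proposition \ref{prop:stopped gluing formula} by performing stop removal at the single stop $\sigma_3$ in the middle piece $\bar F\langle 3\rangle$ of $M\sglu{}{}N$. Concretely, $M\dglu{}{}N$ is obtained from $M\sglu{}{}N$ by discarding the stop $\sigma_3$, whose fiber is $\bar F$, so the stop removal hypothesis on $\bar F$ gives a fully faithful functor $\W(M\sglu{}{}N)/\B_{\sigma_3}\hookrightarrow\W(M\dglu{}{}N)$. It therefore suffices to identify the left hand side with $\overset{\rightarrow}{\mathcal C}$.

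I would first locate $\B_{\sigma_3}$ inside $\overset{\stoppedarrow}{\mathcal C}$ via $\overset{\stoppedarrow}{\Phi}$. By construction, an object $L\in\W(\bar F)$ in the middle column is sent to $(i_{\bar F\langle 3\rangle})_*\tw(\imath_0\to\imath_1\to\imath_2)L[-1]$, which by Proposition \ref{prop:A_n sector presentation}(c) with $n=3$ is canonically isomorphic to $\imath_{\sigma_3}(L)$. Thus the middle column of $\overset{\stoppedarrow}{\mathcal C}$ maps onto a split-generating subcategory of $\B_{\sigma_3}$, and full faithfulness of $\overset{\stoppedarrow}{\Phi}$ reduces the problem to computing the Lyubashenko--Ovsienko quotient of $\overset{\stoppedarrow}{\mathcal C}$ by the middle column $\W(\bar F)$, restricted to the full subcategory on objects of $\W(M)\cup\W(N)$.

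The remaining step is purely algebraic. Expanding the bar complex for morphisms from $X\in\W(M)$ to $Y\in\W(N)$ in the LO quotient, chains with $j$ intermediate objects in $\W(\bar F)$ reproduce term by term the piece of the standard bar resolution computing $\Gamma^\dagger(\imath_{\sigma_N})\otimes_{\W(\bar F)}\Gamma(\imath_{\sigma_M})$ with $j-1$ internal $\W(\bar F)$-compositions, and the LO shift $[j]$ versus the tensor-product shift $[j-1]$ accounts for the outer shift by $[1]$ in the statement. The main obstacle is upgrading this chain-level identification to a quasi-equivalence of $A_\infty$-categories. This is packaged naturally via Lemma \ref{prop:K-L}: both $\overset{\rightarrow}{\mathcal C}$ and the LO quotient are semiorthogonal gluings with canonical restrictions to $\W(M)$ and $\W(N)$, so the remaining data is a closed bimodule morphism whose quasi-isomorphism is precisely the computation just described. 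Agreement of $\overset{\rightarrow}{\Phi}$ with the pushforward functors on $\W(M)$ and $\W(N)$ is then inherited from $\overset{\stoppedarrow}{\Phi}$, since each pushforward factors through the relevant stop-removal quotient.
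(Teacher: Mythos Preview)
Your proposal is correct and follows essentially the same route as the paper: reduce to Proposition \ref{prop:stopped gluing formula}, identify the middle column $\W(\bar F)\subset\overset{\stoppedarrow}{\mathcal C}$ with $\B_{\sigma_3}$ via $\overset{\stoppedarrow}{\Phi}$, and then compute the Lyubashenko--Ovsienko quotient $\overset{\stoppedarrow}{\mathcal C}/\W(\bar F)$ directly, observing that the gluing bimodule becomes $\Gamma^\dagger(\imath_{\sigma_N})[1]\otimes_{\W(\bar F)}\Gamma(\imath_{\sigma_M})$. The paper's proof is terser---it simply writes out the bar complex and notes that $\hom_{\overset{\stoppedarrow}{\mathcal C}}(L_M,L_N)=0$ kills the $j=0$ summand---whereas you package the identification via Lemma \ref{prop:K-L} and spell out the shift bookkeeping, but the content is the same.
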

\begin{proof}
	Because $\overset{\stoppedarrow}{\Phi}$ is isomorphic to $\imath_3$ on $\W(\bar F)$, it is enough to show that $\overset{\rightarrow}{\mathcal C}\cong\overset{\stoppedarrow}{\mathcal C}/\W(\bar F)$ compatibly with the inclusions of $\W(M)$ and $\W(N)$. This follows from the observation that $\overset{\stoppedarrow}{\mathcal C}/\W(\bar F)$ has a semiorthogonal presentation $\langle\W(M),\W(N)\rangle$ with gluing bimodule
	\begin{align*}
	\hom(L_M,L_N)&=\bigoplus_{j\ge1}\bigoplus_{B_1,\dotsc,B_j\in\W(\bar F)}\hom_{\overset{\stoppedarrow}{\mathcal C}}(B_j,L_N)\otimes\dotsm\otimes\hom_{\overset{\stoppedarrow}{\mathcal C}}(L_M,B_1)[j]\\
	&=\Gamma^\dagger(\imath_{\sigma_N})[1]\otimes_{\W(\bar F)}\Gamma(\imath_{\sigma_M})(L_N,L_M).
	\end{align*}
	Here, we have used the fact that $\hom_{\overset{\stoppedarrow}{\mathcal C}}(L_M,L_N)=0$.
\end{proof}

To produce a formula for the ordinary gluing $M\cup N$, we need to identify the functor to $\overset{\rightarrow}{\mathcal C}$ which, under $\overset{\rightarrow}{\Phi}$, corresponds to $\imath_1$.

\begin{prop}\label{prop:directed to undirected gluing}
	In the situation of Corollary \ref{cor:directed gluing formula}, assume that all categories and functors are strictly unital. Then, under $\overset{\rightarrow}{\Phi}$, the cone of the natural transformation
	\[
	T\in\hom^0_{\Fun(\W(\bar F),\,\overset{\rightarrow}{\mathcal C})}\left(\imath_{\sigma_M}[1],\imath_{\sigma_N}\right)
	\]
	given for $a_i\in\hom_{\W(\bar F)}(L_{i-1},L_i)$ by
	\begin{equation}\label{cocomma nat transformation}
	T(a_d,\dotsc,a_1)=(-1)^{(\lvert a_1\rvert-1)+\dotsm+(\lvert a_d\rvert-1)}\mathbf 1_{\imath_{\sigma_N}(L_d)}\otimes a_d\otimes\dotsm\otimes a_1\otimes\mathbf 1_{\imath_{\sigma_M}(L_0)}
	\end{equation}
	is $(i_{\bar F\langle2\rangle})_*\circ\imath_1$. Here $\mathbf1_{\imath_{\sigma_M}(L_0)}$ is the degree $1$ morphism in $\hom(\imath_{\sigma_M}(L_0)[1],\imath_{\sigma_M}(L_0))$ coming from the unit of $\imath_{\sigma_M}(L_0)$.
\end{prop}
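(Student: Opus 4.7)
The plan proceeds in three steps, with the verification that $T$ matches the geometric natural transformation being the main technical content.

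First, I would verify that formula \eqref{cocomma nat transformation} defines a closed natural transformation. The element $\mathbf{1}_{\imath_{\sigma_N}(L_d)}\otimes a_d\otimes\cdots\otimes a_1\otimes\mathbf{1}_{\imath_{\sigma_M}(L_0)}$ lives in the length-$d$ summand of the bar tensor product $\Gamma^\dagger(\imath_{\sigma_N})[1]\otimes_{\W(\bar F)}\Gamma(\imath_{\sigma_M})$. Evaluating the $A_\infty$ closure relation for $T$, the strict unitality hypothesis forces every bimodule operation $\mu^{k|1|l}$ that touches either endpoint unit to reduce to the identity action; the surviving terms are insertions of $\mu_{\W(\bar F)}$ among the $a_i$'s, which cancel by $A_\infty$ associativity in $\W(\bar F)$. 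This is the standard verification that the "bar" element $\mathbf{1}\otimes\cdots\otimes\mathbf{1}$ is a cycle, and the signs in \eqref{cocomma nat transformation} are exactly those needed to make the reduced little-endian conventions work out.

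Second, I would compute $(i_{\bar F\langle 2\rangle})_*\circ\imath_1$ as a cone geometrically. Proposition \ref{prop:A_n sector presentation}(c), applied in $\W(\bar F\langle 2\rangle)$, gives a natural transformation $R_0\colon\imath_0\to\imath_1[1]$ together with an isomorphism $\imath_2[1]\cong\tw(\imath_0\xrightarrow{R_0}\imath_1)$. Rotating the resulting distinguished triangle presents $\imath_1$ as the cone of a natural transformation $\hat T$ between $\imath_2$ and $\imath_0$, with the grading shifts dictated by our graded-lift conventions. Pushing this presentation forward by $i_{\bar F\langle 2\rangle}$ and applying Proposition \ref{prop:isotopic inclusions} to the isotopies connecting the inclusions $\Sigma\bar F\to\bar F\langle 2\rangle\to M\dglu{}{}N$ with the inclusions $\Sigma\bar F\to M\to M\dglu{}{}N$ (and similarly for $N$) yields
\[
(i_{\bar F\langle 2\rangle})_*\imath_1\cong\cone\!\bigl((i_M)_*\imath_{\sigma_M}[1]\to(i_N)_*\imath_{\sigma_N}\bigr),
\]
where the connecting morphism is the pushforward-transported version of $\hat T$.

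Third, I would verify that this transported $\hat T$ coincides with $\overset{\rightarrow}{\Phi}(T)$. Unwinding the definition of $\overset{\rightarrow}{\Phi}$ through the bimodule map \eqref{stopped gluing bimodule map} from the proof of Proposition \ref{prop:stopped gluing formula} (adapted to the quotient $\overset{\rightarrow}{\mathcal C}$ of Corollary \ref{cor:directed gluing formula}), $\overset{\rightarrow}{\Phi}$ sends the specific element $\mathbf 1\otimes a_d\otimes\cdots\otimes a_1\otimes\mathbf 1$ to the morphism in $\W(M\dglu{}{}N)$ obtained by transporting the endpoint units through the same isotopies of $\Sigma\bar F$-inclusions that appear in the second step. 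Thus $\overset{\rightarrow}{\Phi}(T)$ and the transported $\hat T$ are assembled from the same geometric datum and must agree.

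The main obstacle is the third step: matching the combinatorial formula \eqref{cocomma nat transformation} with the geometric natural transformation $\hat T$. Both are canonical in their respective settings, but establishing equality rather than mere isomorphism requires a careful term-by-term comparison that tracks grading shifts, Koszul signs from the reduced little-endian conventions, and the choice of graded lifts of the inclusions used to assemble $\overset{\rightarrow}{\Phi}$; all of these are sensitive to the precise form in which \eqref{stopped gluing bimodule map} is written. Fortunately, the hypothesis of strict unitality and the rigidity of Proposition \ref{prop:isotopic inclusions} together pin down both constructions from the same underlying isotopy, so the match is forced rather than coincidental.
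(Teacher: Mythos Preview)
Your outline is broadly correct through the first two steps, and you are right to flag step three as the crux. But as written, step three has a genuine gap: you do not have a direct description of $\overset{\rightarrow}{\Phi}$ on the off-diagonal bar tensor product, because $\overset{\rightarrow}{\Phi}$ is not constructed by an explicit formula there. It is \emph{induced} from $\overset{\stoppedarrow}{\Phi}$ by quotienting out $\W(\bar F)$ (Corollary~\ref{cor:directed gluing formula}). Your phrase ``transporting the endpoint units through the same isotopies'' does not translate into a computation of what $\overset{\rightarrow}{\Phi}$ does to $\mathbf{1}\otimes a_d\otimes\dotsm\otimes a_1\otimes\mathbf{1}$, and invoking Proposition~\ref{prop:isotopic inclusions} only pins down the endpoints of the cone up to isomorphism, not the connecting map itself.

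The paper resolves this by lifting the entire comparison to the stopped gluing $\overset{\stoppedarrow}{\mathcal C}$, where $\overset{\stoppedarrow}{\Phi}$ \emph{is} explicit. Concretely, one factors the projection $\overset{\stoppedarrow}{\mathcal C}\to\overset{\rightarrow}{\mathcal C}$ as $S\circ Q$ (quotient followed by contraction of the $\W(\bar F)$ objects to zero) and recognizes $T$ as the whiskering $(S\circ Q)^2(\mathbf e_N^\dagger,\mathbf e_M)$ of two unit-type natural transformations $\mathbf e_M\colon\imath_{\sigma_M}[1]\to\mathrm{Id}_{\W(\bar F)}$ and $\mathbf e_N^\dagger\colon\mathrm{Id}_{\W(\bar F)}\to\imath_{\sigma_N}$ in $\overset{\stoppedarrow}{\mathcal C}$. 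This gives $\cone(T)=(S\circ Q)(A)$ for the three-term twisted complex
\[
A=\tw\bigl(\imath_{\sigma_M}[2]\xrightarrow{\mathbf e_M}\mathrm{Id}_{\W(\bar F)}[1]\xrightarrow{\mathbf e_N^\dagger[-1]}\imath_{\sigma_N}\bigr).
\]
Since the $\Phi$ embeddings intertwine $S\circ Q$ with stop removal, it suffices to compute $\overset{\stoppedarrow}{\Phi}(A)$. But now $\overset{\stoppedarrow}{\Phi}$ on $\W(\bar F)$ is given explicitly by the twisted complex $(i_{\bar F\langle3\rangle})_*\tw(\imath_0\to\imath_1\to\imath_2)[-1]$, so $\overset{\stoppedarrow}{\Phi}(A)$ expands to a five-term twisted complex which, after reparenthesizing using the isomorphisms built into \eqref{stopped gluing bimodule map}, collapses to $(i_{\bar F\langle3\rangle})_*\circ\imath_1$. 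This reparenthesization is the step your proposal is missing; it is what converts the algebraic bar element into the geometric $\imath_1$, and it only becomes visible after lifting to $\overset{\stoppedarrow}{\mathcal C}$ where the middle $\W(\bar F)$ piece is retained. Working directly with $\bar F\langle2\rangle$ and Proposition~\ref{prop:A_n sector presentation} for $n=2$, as you propose, hides exactly the term that carries the comparison.
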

\begin{proof}
	That $T$ is in fact a natural transformation is a calculation. To see that its cone is $\imath_1$, we'll begin by lifting it to $\overset\stoppedarrow{\mathcal C}$. To that end, decompose the projection functor $\overset\stoppedarrow{\mathcal C}\to\overset\rightarrow{\mathcal C}$ as a composition
	\begin{equation}\label{algebraic stop removal}\begin{tikzcd}
	\overset\stoppedarrow{\mathcal C}\ar{r}{Q}&\overset\stoppedarrow{\mathcal C}/\W(\bar F)\ar{r}{S}&\overset\rightarrow{\mathcal C}\end{tikzcd},
	\end{equation}
	where $Q$ is the quotient functor and $S$ sends the objects of $\W(\bar F)$ to zero. More explicitly, $S$ is the functor which restricts to the identity functor of $\W(M)$ and $W(N)$, and which on sequences of composable morphisms $w_i\in\hom(L_i,L_{i+1})$ is given by
	\[
	S^d(w_d,\dotsc,w_0)=\begin{cases}w_d\otimes\dotsm\otimes w_0&L_0,L_d\not\in\W(\bar F)\mbox{ and } L_1,\dotsm,L_{d-1}\in\W(\bar F)\\
	0&\mbox{otherwise}\end{cases}
	\]
	Here we have written $w$ for morphisms in the quotient category to emphasize that they are presented by words, and $S$ concatenates them. Now $\overset\rightarrow{\mathcal C}$ is \emph{isomorphic} at chain level to a full subcategory of $\overset\stoppedarrow{\mathcal C}/\W(\bar F)$, and $S$ is a quasi-inverse of that inclusion, so we may view $S$ as an autoequivalence of $\overset\stoppedarrow{\mathcal C}/\W(\bar F)$ which is isomorphic to the identity autoequivalence.
	
	We can now express $T$ as the whiskering $(S\circ Q)^2(\mathbf e_N^\dagger,\mathbf e_M)$, where $\mathbf e_M\in\hom^1(\imath_{\sigma_M}[1],\mathrm{Id}_{\W(\bar F)})$ has leading term
	\[
	\mathbf e_M^0(L)=\mathbf 1_{\imath_{\sigma_M}(L)}\in\Gamma(\imath_{\sigma_M})(L,\imath_{\sigma_M}(L))=\mathrm{end}(\imath_{\sigma_M}(L))
	\]
	and no higher terms, and similarly with $\mathbf e_N\in\hom^0(\mathrm{Id}_{\W(\bar F)},\imath_{\sigma_N})$. This implies that $\cone(T)=(S\circ Q)(A)$, where $A$ is the twisted complex
	\[
	\tw\left(\imath_{\sigma_M}[2]\xrightarrow{\mathbf e_M}\mathrm{Id}_{\W(\bar F)}[1]\xrightarrow{\mathbf e_N[-1]}\imath_{\sigma_N}\right).
	\]
	Because $S$ is isomorphic to the identity and the $\Phi$ embeddings intertwine $Q$ with stop removal, it suffices to show that $\overset{\stoppedarrow}{\Phi}(A)\cong(i_{\bar F\langle3\rangle})_*\circ\imath_1$. For this, we compute that
	\begin{equation}\label{final Orlov functor long formula}
	\overset{\stoppedarrow}{\Phi}(A)=\tw\biggl((i_M)_*\circ\imath_{\sigma_M}[2]
	\rightarrow(i_{\bar F\langle3\rangle})_*\circ\tw\Bigl(\imath_2\leftarrow\imath_1\leftarrow\imath_0\Bigr)
	\rightarrow(i_N)_*\circ\imath_{\sigma_N}\biggr).
	\end{equation}
	Examining \eqref{stopped gluing bimodule map}, which defines the off-diagonal part of $\overset{\stoppedarrow}{\Phi}$, we see that we can reparenthesize \eqref{final Orlov functor long formula} as
	\[
	\tw\biggl(\tw\Bigl((i_M)_*\circ\imath_{\sigma_M}[2]
	\xrightarrow{\cong}(i_{\bar F\langle3\rangle})_*\circ\imath_2\Bigr)\leftarrow(i_{\bar F\langle3\rangle})_*\circ\imath_1\leftarrow\tw\Bigl((i_{\bar F\langle3\rangle})_*\circ\imath_0 \xrightarrow{\cong}(i_N)_*\circ\imath_{\sigma_N}\Bigr)\biggr)\cong(i_{\bar F\langle3\rangle})_*\circ\imath_1.
	\]
\end{proof}

\subsection{Viterbo bimodules}
Returning to the study of the Viterbo transfer map associated to $M^\mathrm{in}\subset\bar M$, we will be interested in three new sectors built from $V_{M^\mathrm{in}}$. The first, which we call the ``doubled Viterbo sector'', is the directed gluing
\[
V_{M^\mathrm{in}}^{(2)}:=V_{M^\mathrm{in}}\dglu{\sigma_0}{\sigma_0}V_{M^\mathrm{in}}.
\]
We will refer to the first copy of $V_{M^\mathrm{in}}$ as $V_{M^\mathrm{in}}^-$ and the second as $V_{M^\mathrm{in}}^+$, and similarly with their objects and Orlov functors. Let us study its Floer theory via the gluing formulas of Section \ref{sec:gluing}. Assuming $M^{\mathrm{in}}$ satisfies stop removal, the images of the two Orlov functors $\imath_0^\pm$ with domain $\W(\bar M^\mathrm{in})$ split-generate $\W\left(V_{M^\mathrm{in}}^{(2)}\right)$. Corollary \ref{cor:directed gluing formula} then gives us
\begin{equation}\label{doubled viterbo sector formula}
\begin{aligned}
\Perf\,\W\left(V_{M^\mathrm{in}}^{(2)}\right)&=\Perf\,\langle\W(V_{M^\mathrm{in}}^-),\W(V_{M^\mathrm{in}}^+)\rangle\\
&\cong\Perf\,\langle\W(M^\mathrm{in,-}),\W(M^\mathrm{in,+})\rangle,
\end{aligned}
\end{equation}
where the identification $\Perf\,\W(\bar M^\mathrm{in})\xrightarrow{\cong}\Perf\,\W(V_{M^\mathrm{in}})$ is given by $\imath_1[1]$. In these coordinates, $\imath_0^\pm$ are given by the corresponding Viterbo functors, which means the gluing bimodule is
\[
\Gamma^\dagger(\mathcal V^+)[1]\otimes_{\W(\bar M)}\Gamma(\mathcal V^-).
\]
After reparametrizing to remove the shift, this amounts to a quasi-isomorphism of $\W(\bar M^\mathrm{in})$-bimodules
\[
\Gamma^\dagger(\mathcal V)\otimes_{\W(\bar M)}\Gamma(\mathcal V)\cong(\imath_1^+,\imath_1^-[1])^*\Delta_{\W\left(V_{M^\mathrm{in}}^{(2)}\right)}.
\]

To prove Theorem \ref{thm:Viterbo is epic}, we will want geometrically interpret the tautological map
\begin{equation}\label{V in the middle}
\Gamma^\dagger(\mathcal V)\otimes_{\W(\bar M)}\Gamma(\mathcal V)\to\Delta_{\W(\bar M^\mathrm{in})}\otimes_{\W(\bar M^\mathrm{in})}\Delta_{\W(\bar M^\mathrm{in})}\cong\Delta_{\W(\bar M^\mathrm{in})}.
\end{equation}
To this end, we imitate the proof of Proposition \ref{prop:directed to undirected gluing} and again lift to the stopped gluing. In the notation of \eqref{algebraic stop removal}, we obtain a commutative diagram
\begin{equation}\label{lift to stopped gluing}
\begin{tikzcd}
\hspace{-.8cm}\stackrel{\stoppedarrow}{\mathcal C}=\begin{vmatrix}\W(\bar M^{\mathrm{in},-})&\Gamma(\mathcal V^-)&\mathbf 0\\&\W(\bar M)&\Gamma^\dagger(\mathcal V^+)[-1]\\&&\W(\bar M^{\mathrm{in},+})\end{vmatrix}\ar[r,"S\circ Q"]\ar[d,"{A=(\mathrm{Id},\mathcal V,\mathrm{Id})}"]&
\begin{vmatrix}\W(\bar M^{\mathrm{in},-})&\Gamma^\dagger(\mathcal V^+)\otimes_{\W(\bar M)}\Gamma(\mathcal V^-)\\&\W(\bar M^{\mathrm{in},+})\end{vmatrix}\underset{\Perf}{\cong}\W\left(V_{M^\mathrm{in}}^{(2)}\right)\hspace{-2.5cm}\ar[d,"\eqref{V in the middle}"]\\
\begin{vmatrix}\W(\bar M^{\mathrm{in},-})&\Delta&\mathbf 0\\&\W(\bar M^\mathrm{in})&\Delta[-1]\\&&\W(\bar M^{\mathrm{in},+})\end{vmatrix}\ar[r,"S\circ Q"]&\begin{vmatrix}\W(\bar M^{\mathrm{in},-})&\Delta\\&\W(\bar M^{\mathrm{in},+})\end{vmatrix}
\end{tikzcd}
\end{equation}
(where the copies of $\W(\bar M^{\mathrm{in},+})$ have been shifted up relative to the decomposition in \eqref{doubled viterbo sector formula}). Note that $\stackrel{\stoppedarrow}{\mathcal C}$ presents the wrapped Fukaya category of the stopped gluing $V_{M^\mathrm{in}}\sglu{\sigma_0}{\sigma_0}V_{M^\mathrm{in}}$. Now our geometric interpretation of \eqref{V in the middle} will come from a geometric interpretation of the functor $A$.

\begin{prop}\label{prop:geometric interpretation of V in the middle}
	Assume both $\bar M$ and (the completion) $\bar M^\mathrm{in}$ satisfy stop removal. Then ``tripled Viterbo sector''
	\[
	V_{M^\mathrm{in}}^{(3)}:=\left(V_{M^\mathrm{in}}^-\sglu{\sigma_0}{\sigma_0}V_{M^\mathrm{in}}^+\right)\glu{\sigma_3(\bar M\langle3\rangle)}{\sigma_0}V_{M^\mathrm{in}}
	\]
	has wrapped Fukaya category
	\begin{equation}\label{tripled Viterbo sector formula}
	\Perf\,\W\left(V_{M^\mathrm{in}}^{(3)}\right)\cong\Perf\,\begin{vmatrix}\W(\bar M^{\mathrm{in},-})&\Delta&\mathbf 0\\&\W(\bar M^\mathrm{in})&\Delta[-1]\\&&\W(\bar M^{\mathrm{in},+})\end{vmatrix}.
	\end{equation}
	In these coordinates, there are automorphisms $\phi^-$ and $\phi^+$ of $\Gamma(\mathcal V^-)$ and $\Gamma^\dagger(\mathcal V^+)$, respectively, assembling into an autoequivalence $\phi$ of $\overset{\stoppedarrow}{\mathcal C}$ for which $A\cong(i_*)\circ\phi$. Here, $i_*$ is the pushforward functor associated to the inclusion
	\[
	i\colon\left(V_{M^\mathrm{in}}\sglu{\sigma_0}{\sigma_0}V_{M^\mathrm{in}}\right)\into V_{M^\mathrm{in}}^{(3)}.
	\]
\end{prop}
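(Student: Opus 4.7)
The plan is first to establish the identification \eqref{tripled Viterbo sector formula} by iterating the gluing formulas of Section \ref{sec:gluing}, and second to construct $\phi$ by realizing both $A$ and $i_*$ as functors of the form prescribed by Lemma \ref{prop:K-L}.

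For the first part, I iterate the stopped gluing formula (Proposition \ref{prop:stopped gluing formula}) in two stages. The inner stage describes $\W\bigl(V_{M^\mathrm{in}}^-\sglu{\sigma_0}{\sigma_0}V_{M^\mathrm{in}}^+\bigr)$ as the three-term semiorthogonal with outer pieces $\W(V_{M^\mathrm{in}}^\pm)$ and middle $\W(\bar M)$; using the quasi-equivalences $\Perf\,\imath_1^\pm[1]$ to reinterpret the outer pieces as $\W(\bar M^{\mathrm{in},\pm})$ (as in the derivation of \eqref{doubled viterbo sector formula}) recovers $\overset{\stoppedarrow}{\mathcal C}$ itself. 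The outer stage attaches the third $V_{M^\mathrm{in}}$ to $\bar M\langle 3\rangle$ along $\sigma_3\leftrightarrow\sigma_0$. Because this is an ordinary rather than stopped gluing, I view it as a stopped gluing in which the two interior stops $\sigma_1$ of $\bar M\langle 3\rangle$ and $\sigma_1$ of $V_{M^\mathrm{in}}$ are removed afterwards---this is exactly where stop removal for $\bar M$ and for $\bar M^\mathrm{in}$ enters. Applying the directed gluing formula (Corollary \ref{cor:directed gluing formula}) together with the doubled Viterbo computation \eqref{doubled viterbo sector formula}---which is precisely the statement $\Gamma^\dagger(\mathcal V)\otimes_{\W(\bar M)}\Gamma(\mathcal V)\cong\Delta_{\W(\bar M^\mathrm{in})}$ (after the expected shifts)---replaces the middle $\W(\bar M)$ of $\overset{\stoppedarrow}{\mathcal C}$ with $\W(\bar M^\mathrm{in})$ and turns the two outer gluing bimodules into diagonals, yielding the three-term semiorthogonal of \eqref{tripled Viterbo sector formula}.

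For the second part, both $A$ and $i_*$ are $A_\infty$-functors $\overset{\stoppedarrow}{\mathcal C}\to\Perf\,\W\bigl(V_{M^\mathrm{in}}^{(3)}\bigr)$, so by Lemma \ref{prop:K-L} each is determined up to isomorphism by its restrictions to the three diagonal pieces together with closed morphisms of gluing bimodules. By the first part, both restrictions to the diagonal pieces agree: they are the identity on $\W(\bar M^{\mathrm{in},\pm})$ and the Viterbo functor $\mathcal V$ on $\W(\bar M)$. For $i_*$ the latter comes from the same geometric identification that made $\bar M\langle 3\rangle\glu{\sigma_3}{\sigma_0}V_{M^\mathrm{in}}$ implement the Viterbo transfer in the outer gluing stage, combined with Proposition \ref{prop:isotopic inclusions}. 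It follows that $A\cong i_*\circ\phi$ for some autoequivalence $\phi$ of $\overset{\stoppedarrow}{\mathcal C}$ which is the identity on diagonal pieces, and whose only nontrivial data is a pair of quasi-automorphisms $\phi^\pm$ of the gluing bimodules $\Gamma(\mathcal V^-)$ and $\Gamma^\dagger(\mathcal V^+)$.

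The main obstacle is the outer stage of the gluing computation: accurately tracking the Viterbo maps, their adjoints, and the associated degree shifts through the successive applications of the stopped and directed gluing formulas, while ensuring that the three-term semiorthogonal of \eqref{tripled Viterbo sector formula} emerges with honest diagonals on the outside, rather than some twisted form that would obstruct the subsequent comparison of $A$ and $i_*$. Once that bookkeeping is in hand, the construction of $\phi$ is routine, since it just compares two presentations of functors with equal diagonal restrictions.
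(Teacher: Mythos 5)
There is a genuine and serious gap in the first step: the argument is circular. You invoke \eqref{doubled viterbo sector formula} as if it said $\Gamma^\dagger(\mathcal V)\otimes_{\W(\bar M)}\Gamma(\mathcal V)\cong\Delta_{\W(\bar M^\mathrm{in})}$, but it does not. What \eqref{doubled viterbo sector formula} actually yields, after reparametrization, is the quasi-isomorphism
\[
\Gamma^\dagger(\mathcal V)\otimes_{\W(\bar M)}\Gamma(\mathcal V)\cong(\imath_1^+,\imath_1^-[1])^*\Delta_{\W\left(V_{M^\mathrm{in}}^{(2)}\right)},
\]
i.e., it identifies the left-hand side with the \emph{restriction} of the diagonal of $\W\bigl(V_{M^\mathrm{in}}^{(2)}\bigr)$ along the Orlov embeddings, not with $\Delta_{\W(\bar M^\mathrm{in})}$ itself. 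The assertion that the tautological map to $\Delta_{\W(\bar M^\mathrm{in})}$ (which is \eqref{V in the middle}) is a quasi-isomorphism is exactly the content of Theorem \ref{thm:Viterbo is epic}, and Proposition \ref{prop:geometric interpretation of V in the middle} is a step in \emph{its} proof. Using the theorem's conclusion here defeats the entire point of the argument. The paper does not need it: the identification \eqref{tripled Viterbo sector formula} is purely geometric, obtained by observing that $V_{M^\mathrm{in}}^{(3)}$ is deformation equivalent (as a Liouville pair on $\bar M\times\C$) to $V_{M^\mathrm{in}}\glu{\sigma_0}{\sigma_1}\bar M^\mathrm{in}\langle3\rangle$, then using forward-stoppedness of the inclusion $\bar M^\mathrm{in}\langle3\rangle\into V_{M^\mathrm{in}}^{(3)}$ and applying Proposition \ref{prop:stopped gluing formula} with $M=N=\Sigma\bar M^\mathrm{in}$.

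The second step also has an unjustified leap: you pass from ``both restrictions to the diagonal pieces agree'' directly to ``it follows that $A\cong i_*\circ\phi$ for some autoequivalence $\phi$.'' This does not follow. Lemma \ref{prop:K-L} says a functor out of $\overset{\stoppedarrow}{\mathcal C}$ is determined by diagonal functors \emph{and} by morphisms of the gluing bimodules; two functors with isomorphic diagonal pieces need not have off-diagonal data differing by composition with a bimodule automorphism (one off-diagonal morphism could, say, be zero while the other is an isomorphism). Producing $\phi^\pm$ is actually the technical heart of the proof: the paper writes out the off-diagonal data of $i_*$ and $A$ explicitly via \eqref{stopped gluing bimodule map}, notes that the two formulas look identical but involve a priori unrelated copies of $\imath_1^-[1]\circ\mathcal V$, and then identifies them by passing through a stop removal to a smaller sector and invoking the sectorial Abouzaid--Seidel restriction, so that any natural transformation can be lifted back. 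This is real work, not routine bookkeeping.
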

\begin{rmk}
	The autoequivalence $\phi$ should not be seen as essential. Rather, it an artifact of the fact that we are only working with functors up to isomorphism instead of up to canonical isomorphism, c.f. Remark \ref{rmk:canonical homotopies}.
\end{rmk}
\begin{proof}
	By stop removal, the objects in the images of the three Orlov functors $\imath_1$ on each Viterbo sector split-generate $\W\left(V_{M^\mathrm{in}}^{(3)}\right)$. Moreover, $V_{M^\mathrm{in}}^{(3)}$ is deformation equivalent to the gluing
	\[
	V_{M^\mathrm{in}}\glu{\sigma_0}{\sigma_1}\bar M^\mathrm{in}\langle3\rangle,
	\]
	since they come from deformation equivalent Liouville pairs. On the other hand, the inclusion $\bar M^\mathrm{in}\langle3\rangle\into V_{M^\mathrm{in}}^{(3)}$ is forward stopped, so by Proposition \ref{prop:forward stopped inclusions} the corresponding pushforward functor is fully faithful. The formula \eqref{tripled Viterbo sector formula} then follows from Proposition \ref{prop:stopped gluing formula} with $M,N=\Sigma\bar M^\mathrm{in}$.
	
	To identify $A$ with the pushforward $i_*$, let us work componentwise. The subcategories $\W(\bar M^{\mathrm{in},-})$ and $\W(\bar M^{\mathrm{in},+})$ are parametrized by the fully faithful functors $\imath_1^-[1]$ and $\imath_1^+$, respectively, so we obtain the identification from functoriality of the pushforward maps. The middle subcategories $\W(\bar M)$ and $\W(\bar M^\mathrm{in})$ are parametrized by the fully faithful functors $\imath_3$ on $\bar M\langle3\rangle$ and $\imath_1$ on $V_{M^\mathrm{in}}$, respectively. As functors to $\W(V_{M^\mathrm{in}}^{(3)})$, we have isomorphisms
	\[
	\left(i_{\bar M\langle3\rangle}\right)_*\circ\imath_3\cong\left(i_{V_{M^\mathrm{in}}}\right)_*\circ\imath_0[-1]\cong\left(i_{V_{M^\mathrm{in}}}\right)_*\circ\imath_1\circ\mathcal V,
	\]
	which gives the desired identification on the middle subcategories. In particular, it follows that $i_*$ and $A$ agree on generating objects.
	
	It remains to construct the automorphisms $\phi^\pm$ which identify the off-diagonal parts of the functors, and as usual we restrict to the first case. Inverting \eqref{tripled Viterbo sector formula}, we study the corresponding two functors
	\begin{equation}\label{tripled sector, first component of functors into}
	\langle\W(M^{\mathrm{in,-}}),\W(\bar M)\rangle\to\W\left(V_{M^\mathrm{in}}^{(3)}\right).
	\end{equation}
	Looking back at the original parametrization of $\Gamma(\mathcal V)$ in \eqref{stopped gluing bimodule map}, we see that $i_*$ is given by
	\begin{equation}\label{restricted i formula}
	\begin{aligned}
	(i_*)|_{\W(\bar M^{\mathrm{in},-})}&=\imath_1^-[1]\\
	(i_*)|_{\W(\bar M)}&=(i_{\bar M\langle3\rangle})_*\circ\tw(\imath_0\to\imath_1\to\imath_2)[-1]\\
	&\qquad\cong\tw\left(\imath_1^+[1]\circ\mathcal V\to\bigl((i_{\bar M\langle3\rangle})_*\circ\imath_1[-1]\bigr)\to\imath_1^-[1]\circ\mathcal V\right)\\
	(i_*)|_{\Gamma(\mathcal V)}&\colon\Gamma(\mathcal V)\xrightarrow{\cong}\left(\imath_1^-[1]\circ\mathcal V,\imath_1^-[1]\right)^*\Delta_{\W\left(V_{M^\mathrm{in}}^{(3)}\right)}.
	\end{aligned}
	\end{equation}
	On the other hand, $A$ is given by
	\begin{equation}\label{restricted A formula}
	\begin{aligned}
	A|_{\W(\bar M^{\mathrm{in},-})}&=\imath_1^-[1]\\
	A|_{\W(\bar M)}&=(i_{\bar M^\mathrm{in}\langle3\rangle})_*\circ\tw(\imath_0\to\imath_1\to\imath_2)[-1]\circ\mathcal V\\
	&\qquad\cong\tw\left(\imath_1^+[1]\to\bigl((i_{\bar M^\mathrm{in}\langle3\rangle})_*\circ\imath_1[-1]\bigr)\to\imath_1^-[1]\right)\circ\mathcal V\\
	A|_{\Gamma(\mathcal V)}&\colon\Gamma(\mathcal V)\xrightarrow{\cong}\left(\imath_1^-[1]\circ\mathcal V,\imath_1^-[1]\right)^*\Delta_{\W\left(V_{M^\mathrm{in}}^{(3)}\right)}.
	\end{aligned}
	\end{equation}
	It is tempting to stop and declare victory, since this expresses the off-diagonal terms of $i_*$ and $A$ by the same formula, except it is not obvious that the two copies of $\imath_1^-[1]\circ\mathcal V$ correspond to the same piece of $(i_*)|_{\W(\bar M)}\cong A|_{\W(\bar M)}$.
	
	Instead, remove the stops $\sigma_1^+$ and $\sigma_1(\bar M\langle3\rangle)$ to obtain a Liouville sector
	\[
	N\cong V_{M^\mathrm{in}}\glu{\sigma_0}{\sigma_0}V_{M^\mathrm{in}}.
	\]
	Now the restriction of the stop removal map $\mathcal{SR}\colon\W\left(V_{M^\mathrm{in}}^{(3)}\right)\to\W(N)$ to the \emph{nonzero} morphism spaces in the image of \eqref{tripled sector, first component of functors into} is fully faithful because those terms factor through the remaining Orlov functors, and those Orlov functors remain forward stopped. Because $i_*$ and $A$ agree on objects we can lift any natural transformation between $\mathcal{SR}\circ i_*$ and $\mathcal{SR}\circ A$ to one between $i_*$ and $A$. For this, pass to the sectorial Viterbo subdomain $\Sigma\bar M^\mathrm{in}\subset N$ via the sectorial Abouzaid--Seidel restriction map $\mathcal V_\mathrm{AS}^\mathrm{sect}$ of Section \ref{sec:abouzaid--seidel}. $\mathcal V_\mathrm{AS}^\mathrm{sect}$ is itself a quasi-equivalence, again by stop removal and forward stoppedness, so in fact we can lift any natural transformation between $\mathcal V_\mathrm{AS}^\mathrm{sect}\circ\mathcal{SR}\circ i_*$ and $\mathcal V_\mathrm{AS}^\mathrm{sect}\circ\mathcal{SR}\circ A$ to one between $i_*$ and $A_*$.
	Looking back at our formulas \eqref{restricted i formula} and \eqref{restricted A formula}, we see these compositions are canonically isomorphic. Indeed, they are given by
	\begin{align*}
	\Sigma_*&\text{ on }\W(\bar M^\mathrm{in}),\\
	\Sigma_*\circ\mathcal V&\text{ on }\W(\bar M),\text{ and}\\
	\Gamma(V)&\xrightarrow{\cong}(\Sigma_*\circ\mathcal V,\Sigma_*)^*\Delta_{\W(\Sigma\bar M^\mathrm{in})}\text{ on off-diagonal morphisms.}
	\end{align*}
	We thus obtain a canonical isomorphism
	\[
	(i_*)|_{\langle\W(M^{\mathrm{in,-}}),\W(\bar M)\rangle}\cong A|_{\langle\W(M^{\mathrm{in,-}}),\W(\bar M)\rangle}.
	\]
	
	This isomorphism agrees with the previously constructed one on $\W(M^\mathrm{in,-})$, but not obviously on $\W(\bar M)$ -- this is the same issue which prevented us from winning earlier. However, the previously constructed isomorphism $(i_*)|_{\W(\bar M)}\cong A|_{\W(\bar M)}$ descends to an automorphism of $\Sigma_*\circ\mathcal V$, which we can reinterpret as the desired automorphism of $\phi^-$ of $\Gamma(\mathcal V)$.
\end{proof}

\begin{proof}[Proof of Theorem \ref{thm:Viterbo is epic}]
	Recall the diagram \eqref{lift to stopped gluing}, reproduced here in condensed form with an additional piece from Proposition \ref{prop:geometric interpretation of V in the middle}.
	\begin{equation*}
	\begin{tikzcd}
	&\bigl\langle\W(\bar M^{\mathrm{in},-}),\W(\bar M),\W(\bar M^{\mathrm{in},+})\bigr\rangle\ar[r,"S\circ Q"]\ar[d,"{A=(\mathrm{Id},\mathcal V,\mathrm{Id})}"]\ar[dl,"\phi" above]&
	\bigl\langle\W(\bar M^{\mathrm{in},-}),\W(\bar M^{\mathrm{in},+})\bigr\rangle\ar[d,"\eqref{V in the middle}"]\\
	\bigl\langle\W(\bar M^{\mathrm{in},-}),\W(\bar M),\W(\bar M^{\mathrm{in},+})\bigr\rangle\ar[r,"i_*"]
	&\bigl\langle\W(\bar M^{\mathrm{in},-}),W(\bar M^\mathrm{in}),\W(\bar M^{\mathrm{in},+})\bigr\rangle\ar[r,"S\circ Q"]&\bigl\langle\W(\bar M^{\mathrm{in},-}),\W(\bar M^{\mathrm{in},+})\bigr\rangle
	\end{tikzcd}
	\end{equation*}
	
	To begin, note that under the equivalence \eqref{tripled Viterbo sector formula}, the bottom arrow $S\circ Q$ corresponds to stop removal for $\sigma_1(V_{M^\mathrm{in}})$. This means the bottom composition $S\circ Q\circ i_*$ is the pushforward associated to the chain
	\begin{equation}\label{lower inclusion chain}
	\left(V_{M^\mathrm{in}}\sglu{\sigma_0}{\sigma_0}V_{M^\mathrm{in}}\right)\into V_{M^\mathrm{in}}^{(3)}\into V_{M^\mathrm{in}}^{(3)}\cup\sigma_1(V_{M^\mathrm{in}}).
	\end{equation}
	On the other hand, $V_{M^\mathrm{in}}^{(3)}\cup\sigma_1(V_{M^\mathrm{in}})$ is deformation equivalent to $V_{M^\mathrm{in}}^{(2)}$ because they correspond to the same Liouville pair (this is the key geometric ingredient), whence it follows that $\eqref{lower inclusion chain}$ is isotopic to the inclusion
	\[
	V_{M^\mathrm{in}}^{(3)}\into V_{M^\mathrm{in}}^{(2)}.
	\]
	By stop removal, we conclude that $S\circ Q\circ i_*$ is a quotient by the full subcategory $\W(\bar M)$. Because $\phi$ is a quasi-equivalence and preserves the semiorthogonal decomposition, the full bottom left composition
	\[
	S\circ Q\circ i_*\circ\phi
	\]
	and hence the the top right composition
	\[
	\eqref{V in the middle}\circ S\circ Q
	\]
	are also quotients by the full subcategory $\W(\bar M)$. On the other hand, the same is true for the upper arrow $S\circ Q$ by construction, so \eqref{V in the middle} must be a quasi-equivalence.
\end{proof}

\section{Spherical Orlov functors}\label{ch:spherical}

\subsection{Spherical swaps}\label{sec:spherical swaps}
Recall from \cite{AnnoLogvinenko2017} that a functor $F\colon\mathcal A\to\mathcal B$ is called \emph{spherical} if it has left and right adjoints $L$ and $R$, and such that
\begin{enumerate}
	\item the twist $\mathbf W=\cone\left(FR\xrightarrow{\mathrm{counit}}\mathrm{Id}_{\mathcal B}\right)$ and dual twist $\mathbf W'=\cone\left(\mathrm{Id}_{\mathcal B}\xrightarrow{\mathrm{unit}}FL\right)[-1]$ are inverse quasi-equivalences, and
	\item the cotwist $\mathbf M=\cone\left(\mathrm{Id}_{\mathcal A}\xrightarrow{\mathrm{unit}}RF\right)[-1]$ and dual cotwist $\mathbf M'=\cone\left(LF\xrightarrow{\mathrm{counit}}\mathrm{Id}_{\mathcal A}\right)$ are inverse quasi-equivalences.
\end{enumerate}
The notion of a spherical functor has a well known reinterpretation coming from GIT.

\begin{thm}[\cite{HalpernLeistnerShipman2016}]\label{thm:HL-S}
	$F$ is spherical if and only if the semiorthogonal gluing $\mathcal C=\begin{vmatrix}\mathcal A&\Gamma^\dagger(F)\\&\mathcal B\end{vmatrix}$ fits into a $4$-periodic sequence of semiorthogonal decompositions. In other words, if and only if $\mathcal A^{\perp\perp\perp\perp}=\mathcal A$ as full pretriangulated subcategories of $\mathcal C$.
	
	In this case, the dual twist $\mathbf W'$ is given by the iterated mutation $\mathbb R_{\mathcal A}\circ\mathbb R_{^{\perp\perp}\mathcal A}$, and the dual cotwist $\mathbf M'$ is given by the iterated mutation $\mathbb L_{\mathcal B}\circ\mathbb L_{\mathcal B^{\perp\perp}}$.
\end{thm}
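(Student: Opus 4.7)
The plan is to set up a dictionary between the iterated orthogonal complements of $\mathcal{A}$ in $\mathcal{C}$ and the adjoints and twists of $F$, then read off both directions of the biconditional and the identifications of the mutation functors with $\mathbf{W}'$ and $\mathbf{M}'$.

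First I would translate admissibility of each piece in $\mathcal{C}$ into data for $F$. The tautological decomposition $\mathcal{C} = \langle\mathcal{A},\mathcal{B}\rangle$ is always present, with glue $\hom_\mathcal{C}(A,B) = \hom_\mathcal{B}(F(A),B)$ and $\hom_\mathcal{C}(\mathcal{B},\mathcal{A}) = 0$. By Proposition \ref{prop:K-L}, a projection functor $\mathcal{C}\to\mathcal{A}$ or $\mathcal{C}\to\mathcal{B}$ is specified by its restrictions and a closed bimodule morphism; Yoneda then identifies a right adjoint to $\iota_\mathcal{A}$ with the right adjoint $R$ of $F$, and a left adjoint to $\iota_\mathcal{A}$ with the left adjoint $L$ of $F$. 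The upshot is that existence of $\mathcal{A}^\perp$ and $\mathcal{A}^{\perp\perp}$ as valid SOD pieces requires exactly $R$ and $L$, and once both adjoints are present the remaining admissibility conditions propagate automatically.

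Next I would identify each $\mathcal{A}^{\perp^k}$ concretely as a copy of $\mathcal{A}$ or $\mathcal{B}$ embedded in $\mathcal{C}$ via a graph bimodule built from $F$, $L$, and $R$. Unwinding the mutation triangle defining $\mathbb{R}_\mathcal{A}$ on an object of the left piece produces a cone whose bimodule is $\Gamma^\dagger(F)$ tensored with the graph of an adjoint, up to shift. Telescoping the two successive cones $\mathbb{R}_\mathcal{A}\circ\mathbb{R}_{{}^{\perp\perp}\mathcal{A}}$ simplifies to the defining cone of $\mathbf{W}'$, yielding the stated identification on the $\mathcal{B}$-side; dually, iterated left mutations on the $\mathcal{A}$-side deliver $\mathbf{M}'$. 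In particular, the composite embedding $\mathcal{A}^{\perp\perp\perp\perp}\hookrightarrow\mathcal{C}$ factors through $\mathcal{A}\hookrightarrow\mathcal{C}$ precomposed with the autoequivalence $\mathbf{M}'$ of $\mathcal{A}$.

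The biconditional then follows: the equality $\mathcal{A}^{\perp\perp\perp\perp} = \mathcal{A}$ as full subcategories of $\mathcal{C}$ is equivalent to $\mathbf{M}'$ being isomorphic to $\mathrm{Id}_\mathcal{A}$, hence to $\mathbf{M}'$ being an equivalence; by symmetry with the $\mathcal{B}$-side this also forces $\mathbf{W}'$ to be an equivalence, which is precisely the Anno--Logvinenko condition for sphericality. Conversely, sphericality makes both twists invertible and all admissibilities hold, so the 4-periodic closure holds and the mutation formulas for $\mathbf{W}'$ and $\mathbf{M}'$ emerge from the same calculation run in reverse.

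The main obstacle is the explicit $A_\infty$ bimodule computation identifying $\mathcal{A}^{\perp\perp\perp\perp}$ as the graph of $\mathbf{M}'$. This requires tensoring four successive graph bimodules over alternately $\mathcal{A}$ and $\mathcal{B}$ while coherently tracking the $A_\infty$-homotopies and degree shifts introduced by each mutation triangle, and then verifying that the resulting autoequivalence of $\mathcal{A}$ matches $\mathbf{M}'$ on the nose rather than up to an extra twist. Ensuring that the closure $\mathcal{A}^{\perp\perp\perp\perp} = \mathcal{A}$ is a strict equality of subcategories (and not merely an equivalence) is the subtlest piece of bookkeeping.
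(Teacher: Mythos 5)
This theorem is cited by the paper from \cite{HalpernLeistnerShipman2016} without proof, so there is no internal argument to compare your sketch against; I will assess it on its own terms.

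The gap is in the step that closes the biconditional. You assert that the embedding $\mathcal{A}^{\perp\perp\perp\perp}\into\mathcal{C}$ ``factors through $\mathcal{A}\into\mathcal{C}$ precomposed with the autoequivalence $\mathbf{M}'$,'' and then that $\mathcal{A}^{\perp\perp\perp\perp}=\mathcal{A}$ is equivalent to $\mathbf{M}'\cong\mathrm{Id}_{\mathcal{A}}$. The first claim begs the question: if the embedding of $\mathcal{A}^{\perp\perp\perp\perp}$ factored as $i_{\mathcal{A}}$ postcomposed with any equivalence of $\mathcal{A}$, then $\mathcal{A}^{\perp\perp\perp\perp}=\mathcal{A}$ would hold automatically whenever the adjoints exist, making $4$-periodicity vacuous rather than equivalent to sphericality. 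What is true without hypotheses is that $\mathcal{A}^{\perp\perp\perp\perp}$ is a \emph{different} full subcategory of $\mathcal{C}$, abstractly equivalent to $\mathcal{A}$ via the (always invertible) iterated mutation but not equal as a subcategory; the real content is that equality holds iff $\mathbf{M}'$ --- which is always defined as an endofunctor of $\mathcal{A}$ but need not be invertible --- is an \emph{equivalence}, not iff $\mathbf{M}'\cong\mathrm{Id}_{\mathcal{A}}$. The latter is a far stronger condition; it would force, for instance, the monodromy of any Lefschetz fibration to be trivial, since that monodromy is $\mathbf{M}'[-2]$. Note also that your two claims contradict each other: if the factorization through $\mathbf{M}'$ holds and $\mathbf{M}'$ is an autoequivalence, then $\mathcal{A}^{\perp\perp\perp\perp}=\mathcal{A}$ with no condition, rather than iff $\mathbf{M}'\cong\mathrm{Id}$. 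Finally, ``by symmetry with the $\mathcal{B}$-side this also forces $\mathbf{W}'$ to be an equivalence'' is not a formal duality but precisely the nontrivial $2$-out-of-$4$ content of the Anno--Logvinenko theorem. In the SOD picture one can instead apply $\perp$ to both sides of $\mathcal{A}^{\perp\perp\perp\perp}=\mathcal{A}$ to obtain $\mathcal{B}^{\perp\perp\perp\perp}=\mathcal{B}$, but converting that into invertibility of $\mathbf{W}'$ is exactly the bimodule computation you correctly flag as the hard part and then defer.
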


Recall that the left mutation $\mathbb L_{\mathcal B}\colon\mathcal A^-\to\mathcal A^+$ associated to a pair of semiorthogonal decompositions
\begin{equation}\label{admissible subcategory}
\mathcal C=\langle\mathcal B,\mathcal A^-\rangle=\langle\mathcal A^+,\mathcal B\rangle
\end{equation}
is the $\mathcal A^+$-component of the inclusion $i_{\mathcal A^-}\colon\mathcal A^-\into\mathcal C$ under the second semiorthogonal decomposition. Equivalently, it is the unique functor admitting a degree zero natural transformation
\[
i_{\mathcal A^-}\to i_{\mathcal A^+}\circ\mathbb L_{\mathcal B}
\]
whose cone lands in $\mathcal B$. The story for the right mutations is similar, and it is a theorem of Bondal that left and right mutations are inverse to one another \cite{Bondal1990}. A full triangulated subcategory $\mathcal B\subset\mathcal C$ is called \emph{admissible} precisely when it fits into semiorthogonal decompositions \ref{admissible subcategory}.

Note that components of general semiorthogonal gluings need not be admissible, but in our situation we are guaranteed at least one admissible subcategory.

\begin{lemma}[\cite{Efimov2013}]\label{lem:functor target is admissible}
	Let $F\colon\mathcal A\to\mathcal B$ be any functor between pretriangulated $A_\infty$-categories. Let $\langle\mathcal B,\mathcal A^-\rangle$ and $\langle\mathcal A^+,\mathcal B\rangle$ be the gluings along $\Gamma(F)$ and $\Gamma^\dagger(F)$, respectively. Then there is a canonical (up to homotopy) identification
	\[
	\langle\mathcal B,\mathcal A^-\rangle\cong\langle\mathcal A^+,\mathcal B\rangle
	\]
	which restricts to the identity on $\mathcal B$.
\end{lemma}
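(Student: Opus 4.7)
The plan is to classify $A_\infty$-functors out of $\mathcal C$ and $\mathcal D$ and observe that these classifying data coincide on pretriangulated targets.

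By Lemma~\ref{prop:K-L}, an $A_\infty$-functor $\mathcal C \to \mathcal E$ is equivalent to a triple $(F_{\mathcal A}, F_{\mathcal B}, \eta)$ consisting of functors $F_{\mathcal A}\colon \mathcal A \to \mathcal E$, $F_{\mathcal B}\colon \mathcal B \to \mathcal E$ and a natural transformation $\eta\colon F_{\mathcal B}\circ F \to F_{\mathcal A}$; here $\eta$ is Yoneda-dual to the bimodule morphism $F_M$ from \ref{prop:K-L}, with leading term $\eta_A = F_M(\mathrm{id}_{F(A)})$. Dually, $A_\infty$-functors $\mathcal D \to \mathcal E$ classify as triples $(G_{\mathcal A}, G_{\mathcal B}, \epsilon\colon G_{\mathcal A} \to G_{\mathcal B}\circ F)$. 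When $\mathcal E$ is pretriangulated, the mapping cone converts the two kinds of data into one another with the $\mathcal B$-part fixed: from $(G_{\mathcal A}, G_{\mathcal B}, \epsilon)$ produce $F_{\mathcal A} = \cone(\epsilon)$ with $\eta$ the second morphism of the triangle $G_{\mathcal A} \to G_{\mathcal B}\circ F \to F_{\mathcal A}$, and inversely via fibers. This identifies the universal properties of $\mathcal C$ and $\mathcal D$ on pretriangulated targets, yielding the canonical (up to homotopy) equivalence $\mathcal C \simeq \mathcal D$ restricting to the identity on $\mathcal B$; pretriangulatedness of both sides is essential for taking cones and fibers.

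An explicit model $\Phi\colon \mathcal C \to \mathcal D$ arises by applying this correspondence to $\mathcal E = \mathcal D$ and $\mathrm{id}_{\mathcal D}$. One reads off $\Phi(B) = B$ and $\Phi(A) = \cone(\epsilon^{\mathrm{taut}}_A)$, where $\epsilon^{\mathrm{taut}}_A\colon A \to F(A)$ is the morphism in $\mathcal D$ represented by $\mathrm{id}_{F(A)} \in \hom_{\mathcal B}(F(A), F(A)) = \hom_{\mathcal D}(A, F(A))$. Full faithfulness is a short computation using the triangle $A \to F(A) \to \Phi(A)$ in $\mathcal D$ together with the vanishing $\hom_{\mathcal D}(\mathcal B, \mathcal A) = 0$: one verifies $\hom_{\mathcal D}(B, \Phi(A)) = \hom_{\mathcal B}(B, F(A)) = \hom_{\mathcal C}(B, A)$, $\hom_{\mathcal D}(\Phi(A), B) = 0 = \hom_{\mathcal C}(A, B)$, and $\hom_{\mathcal D}(\Phi(A), \Phi(A')) = \hom_{\mathcal A}(A, A')$.

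The main effort lies in the Yoneda-style correspondence between closed bimodule morphisms $\Gamma(F) \to (F_{\mathcal A}, F_{\mathcal B})^*\Delta_{\mathcal E}$ and $A_\infty$-natural transformations $F_{\mathcal B}\circ F \to F_{\mathcal A}$ at the full $A_\infty$ level, including all higher structure maps. This reduces to the statement that morphisms out of the pulled-back diagonal $\Gamma(F) = (F, \mathrm{id}_{\mathcal B})^*\Delta_{\mathcal B}$ are determined up to homotopy by their action on the tautological elements $\mathrm{id}_{F(A)} \in \Gamma(F)(A, F(A))$, a standard $A_\infty$-Yoneda argument. The inverse direction, assembling a closed $F_M$ from a natural transformation $\eta$, proceeds by combining $\eta$ with the $A_\infty$ structure maps of $F_{\mathcal B}$ in the usual way.
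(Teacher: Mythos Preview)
Your explicit construction in the second paragraph --- sending $A\in\mathcal A^-$ to $\cone(\epsilon^{\mathrm{taut}}_A)$ inside $\mathcal D$ and verifying full faithfulness from the triangle $A\to F(A)\to\Phi(A)$ together with $\hom_{\mathcal D}(\mathcal B,\mathcal A)=0$ --- is exactly dual to the paper's argument. The paper stays inside $\mathcal C=\langle\mathcal B,\mathcal A^-\rangle$ and realizes $\mathcal A^+$ there as $\cone(T)[-1]$ for the tautological $T\colon i_{\mathcal B}\circ F\to i_{\mathcal A^-}$, then checks full faithfulness, right-orthogonality to $\mathcal B$, generation, and that the resulting gluing bimodule is $\Gamma^\dagger(F)$. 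Both constructions are the same distinguished triangle $\mathcal A^+\to i_{\mathcal B}\circ F\to\mathcal A^-$ read from opposite ends; your verifications and the paper's are the same computations transposed. One small omission: you check full faithfulness but not essential surjectivity of $\Phi$, though this is immediate from the same triangle (every $A\in\mathcal A^+\subset\mathcal D$ is the fiber of $F(A)\to\Phi(A)$).

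Your first paragraph offers a genuinely different viewpoint that the paper does not take: rather than exhibiting the equivalence and checking it, you characterize functors out of $\mathcal C$ and $\mathcal D$ (via Lemma~\ref{prop:K-L} and the Yoneda translation between bimodule maps out of $\Gamma(F)$ and natural transformations) and observe that cone/fiber swaps the two descriptions on pretriangulated targets. This is more conceptual and explains \emph{why} the equivalence should exist, but as a standalone argument it presupposes a Yoneda lemma in the homotopy category of pretriangulated $A_\infty$-categories, which you do not justify. Since the explicit model of paragraph two stands on its own, this is not a gap --- the universal-property discussion functions as motivation rather than proof. The paper's bare-hands approach is more self-contained; yours gives more insight into the mechanism.
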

\begin{proof}
	Assume without loss of generality that everything is strictly unital. The identity endomorphism of $F$ in $\Fun(\mathcal A,\mathcal B)$ induces a natural transformation
	\[
	T\in\hom^0_{\Fun(\mathcal A,\langle\mathcal B,\mathcal A^-\rangle)}(i_{\mathcal B}\circ F,i_{\mathcal A^-}).
	\]
	One can readily verify that $\cone(T)[-1]$ is fully faithful, that its image is right-orthogonal to $\mathcal B$, that its image and $\mathcal B$ together generate the glued category $\langle\mathcal B,\mathcal A^-\rangle$, and that
	\[
	(i_{\mathcal B},\cone(T)[-1])^*\Delta_{\langle\mathcal B,\mathcal A^-\rangle}\cong\Gamma^\dagger(F).
	\]
\end{proof}

We will not need it, but it is an easy exercise that all admissible subcategories arise in this way. We are now ready to introduce our main algebraic notion.

\begin{defn}\label{def:spherical swap}
	With notation as in Lemma \ref{lem:functor target is admissible}, let
	\[
	\mathcal C=\langle\mathcal B,\mathcal A^-\rangle=\langle\mathcal A^+,\mathcal B\rangle.
	\]
	Define a \emph{spherical swap} of $F$ to be an autoequivalence $S$ of $\mathcal C$ exchanging the full subcategories $\mathcal A^-$ and $\mathcal A^+$.
	
	$S$ is called \emph{$\mathcal A$-positive} if $S\circ i_{\mathcal A^-}\cong i_{\mathcal A^+}$ and \emph{$\mathcal B$-positive} if the composition
	\[
	\mathcal B\cong\mathcal C/\mathcal A^+\xrightarrow{S}\mathcal C/\mathcal A^-\cong\mathcal B
	\]
	is isomorphic to $\mathrm{Id}_{\mathcal B}$. $S$ is \emph{positive} if it is both $\mathcal A$-positive and $\mathcal B$-positive.
\end{defn}

Note that if $S$ is any spherical swap of $F$, then the iterate $S^2$ fixes $\mathcal B$, because it fixes $\mathcal A^+$ and preserves semiorthogonal complements.

What follows is the main result of this section.

\begin{prop}\label{prop:spherical swap iff spherical}
	With notation as in Lemma \ref{lem:functor target is admissible}, the following are equivalent.
	\begin{enumerate}
		\item\label{item:F spherical} $F$ is spherical.
		\item\label{item:pos spherical swap} $F$ admits a positive spherical swap.
		\item\label{item:spherical swap} $F$ admits a spherical swap.
	\end{enumerate}
	
	Moreover, if $S$ is a $\mathcal B$-positive spherical swap of $F$, then the dual twist $\mathbf W'$ is isomorphic to the restriction $S^2|_{\mathcal B}$. Similarly, if $S$ is $\mathcal A$-positive, then the dual cotwist $\mathbf M'$ is isomorphic to the restriction $S^2[2]|_{\mathcal A^+}$.
\end{prop}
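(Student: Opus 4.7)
The plan is to prove the cycle $(2) \Rightarrow (3) \Rightarrow (1) \Rightarrow (2)$, with the last implication being the substantive one.

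$(2) \Rightarrow (3)$ is tautological. For $(3) \Rightarrow (1)$, I apply the given spherical swap $S$ to the two canonical decompositions from Lemma \ref{lem:functor target is admissible}. Since $S$ is an autoequivalence exchanging $\mathcal A^+$ and $\mathcal A^-$ as full subcategories, its action on these decompositions produces two new semi-orthogonal decompositions of $\mathcal C$, and concatenation yields
\[
\mathcal C = \langle \mathcal A^+, \mathcal B\rangle = \langle \mathcal B, \mathcal A^-\rangle = \langle \mathcal A^-, S(\mathcal B)\rangle = \langle S(\mathcal B), \mathcal A^+\rangle.
\]
This is exactly the 4-periodic chain of orthogonals characterizing sphericality by Theorem \ref{thm:HL-S}.

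For the substantive direction $(1) \Rightarrow (2)$, Theorem \ref{thm:HL-S} produces the analogous chain with an admissible subcategory $\mathcal B'$ in place of $S(\mathcal B)$. The first and third decompositions $\langle \mathcal A^+, \mathcal B\rangle$ and $\langle \mathcal A^-, \mathcal B'\rangle$ both present $\mathcal C$ as a semi-orthogonal gluing of a copy of $\mathcal A$ by a copy of $\mathcal B$, via bimodules that are both graphs of spherical functors. I construct $S$ using Lemma \ref{prop:K-L}: specify $S$ on the decomposition $\langle \mathcal B, \mathcal A^-\rangle$ by setting $S|_{\mathcal A^-} := i_{\mathcal A^+}$ (enforcing $\mathcal A$-positivity), and $S|_\mathcal B := i_{\mathcal B'}$ precomposed with the canonical isomorphism $\mathcal B \cong \mathcal B'$ coming from the fact that both are admissible complements of $\mathcal A^-$ and so project isomorphically to $\mathcal C/\mathcal A^-$ (enforcing $\mathcal B$-positivity). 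The bimodule map $\Gamma(F) \to (i_{\mathcal A^+}, i_{\mathcal B'})^*\Delta_\mathcal C$ is built from the natural transformation in the proof of Lemma \ref{lem:functor target is admissible} applied to the third decomposition, and verifying that the resulting functor is an equivalence amounts to matching the target bimodule with $\Gamma(F)$ using the spherical structure.

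For the twist identifications, if $S$ is $\mathcal B$-positive, then $S^2$ preserves $\mathcal A^+$ and $\mathcal B$ setwise and descends to the identity on $\mathcal C/\mathcal A^+ \cong \mathcal B$ as the square of the identity. On the other hand, $S$ being a spherical swap with $S(\mathcal B) = \mathcal B'$ means $S^2$ realizes ``rotation by four'' in the 4-periodic chain, so $S^2|_\mathcal B$ computes the iterated right mutation $\mathbb R_{\mathcal A^+} \circ \mathbb R_{\mathcal A^-}$, which Theorem \ref{thm:HL-S} identifies with $\mathbf W'$. The $\mathcal A$-positive case and $\mathbf M'$ is analogous via iterated left mutations; the grading shift $[2]$ arises from the conventions relating the cone-of-counit formula $\mathbf M' = \cone(LF \to \mathrm{Id}_\mathcal A)$ to the shift-free mutation functors.

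The hardest step will be the construction of $S$ in $(1) \Rightarrow (2)$, and specifically arranging both positivity conditions simultaneously at the $A_\infty$-level. The identifications $\mathcal A^- \cong \mathcal A^+$ and $\mathcal B \cong \mathcal B'$ must be chosen compatibly, which in turn determines the bimodule map, and showing that the resulting gluing bimodule is quasi-isomorphic to $\Gamma(F)$ requires unpacking how the iterated mutations in Theorem \ref{thm:HL-S} intertwine with the adjunction data defining $\mathbf W'$ and $\mathbf M'$.
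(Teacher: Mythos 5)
Your implication cycle is the same as the paper's (reordered) and your $(3)\Rightarrow(1)$ argument is a correct unpacking of the ``immediate'' step. For $(1)\Rightarrow(2)$ your construction of $S$ is in the right spirit — the paper also sets $S|_{\mathcal A^-}\cong\cone(T)[-1]$, which is the parametrization of $\mathcal A^+$ — but you define $S|_{\mathcal B}$ by chasing through the quotient identification $\mathcal B\cong\mathcal C/\mathcal A^-\cong\mathcal B'$, whereas the paper works explicitly with $S|_{\mathcal B}=\cone(U)[-1]$ for $U$ induced by the unit of $L\dashv F$, precisely so that the required quasi-isomorphism $(S\circ i_{\mathcal A^-},S\circ i_{\mathcal B})^*\Delta_{\mathcal C}\cong\Gamma(F)$ can be checked by a direct twisted-complex calculation. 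You correctly flag that this verification is the hard part, but you do not carry it out; without an explicit model for $S|_{\mathcal B}$ the matching of bimodules is not obvious.

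The real gap is in the twist identification. The sentence ``$S^2$ \dots descends to the identity on $\mathcal C/\mathcal A^+\cong\mathcal B$ as the square of the identity'' is false and in fact contradicts your conclusion: since the projection $\mathcal B\to\mathcal C/\mathcal A^+$ is an equivalence intertwining $S^2|_{\mathcal B}$ with the descended functor, your claim would force $S^2|_{\mathcal B}\cong\mathrm{Id}_{\mathcal B}$, i.e.\ $\mathbf W'\cong\mathrm{Id}$, which is wrong in general. The error comes from composing quotient identifications through \emph{different} subcategories: $\mathcal B$-positivity controls $(j^-)^{-1}\circ\bar S\circ j^+$ but says nothing directly about $(j^+)^{-1}\circ\bar S\circ j^-$, and the failure of these to be inverse is precisely the content of $\mathbf W'$. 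Relatedly, ``rotation by four gives the iterated mutation'' does not isolate where $\mathcal B$-positivity enters: for a non-positive spherical swap, $S^2|_{\mathcal B}$ differs from $\mathbb R_{\mathcal A^+}\circ\mathbb R_{\mathcal A^-}$ by a genuine autoequivalence. The paper's proof is the adjoint computation showing $\mathbb R_{\mathcal A^-}\cong i_{\mathcal B^{\perp\perp}}^L\circ S\circ i_{\mathcal B}$ and $\mathbb R_{\mathcal A^+}\cong i_{\mathcal B}^R\circ S\circ i_{\mathcal B^{\perp\perp}}$ directly from the $\mathcal B$-positivity identity and its right adjoint, so that the composition telescopes to $i_{\mathcal B}^R\circ S^2\circ i_{\mathcal B}$. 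Likewise, your attribution of the $[2]$ shift in $\mathbf M'$ to ``the cone-of-counit formula'' is imprecise: in the paper it comes from the identity $\mathbb L_{\mathcal B}\cong\mathrm{Id}_{\mathcal A}[1]$, which is forced by the shift in $i_{\mathcal A^+}=\cone(T)[-1]$, and this shows up twice in the telescoping computation.
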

\begin{proof}
	\emph{\ref{item:F spherical}$\Rightarrow$\ref{item:pos spherical swap}:} This can be extracted from the proof of \cite[Theorem 3.15]{HalpernLeistnerShipman2016}, which is part of Theorem \ref{thm:HL-S}, but we will reproduce the relevant portion in our notation. We will still allow ourselves to use Theorem \ref{thm:HL-S} in order to minimize computation.
	
	In the coordinates $\mathcal C=\langle\mathcal B,\mathcal A^-\rangle$, we wish to construct an autoequivalence $S$ of $\mathcal C$ such that $S|_{\mathcal A^-}\cong\cone(T)[-1]$, where $T$ is the natural transformation from the proof of Lemma \ref{lem:functor target is admissible}. Such an autoequivalence is automatically a swap because it preserves double orthogonals and $(\mathcal A^+)^{\perp\perp}=\mathcal A^-$ by Theorem \ref{thm:HL-S}. It is also automatically $\mathcal A$-positive.
	
	Define $S|_{\mathcal B}=\cone(U)[-1]$, where
	\[
	U\in\hom^0_{\Fun(\mathcal B,\langle\mathcal B,\mathcal A^-\rangle)}(i_{\mathcal B},i_{\mathcal A^-}\circ L)\cong\hom^0_{\Fun(\mathcal B,\langle\mathcal B,\mathcal A^-\rangle)}(i_{\mathcal B},i_{\mathcal B}\circ F\circ L)
	\]
	is induced by the unit of the adjunction $L\dashv F$. This guarantees that $S$ will be $\mathcal B$-positive. Now, one readily computes via the adjunction that the image of $S|_{\mathcal B}$ lands in $^\perp(\mathcal A^-)=(\mathcal A^+)^\perp$, and this is surjective because together with $\mathcal A^-$ it generates $\mathcal C$. It remains to produce any isomorphism
	\[
	(S\circ i_{\mathcal A^-},S\circ i_{\mathcal B})^*\Delta_{\mathcal C}\cong\Gamma(F).
	\]
	For that, we expand
	\begin{align*}
	(S\circ i_{\mathcal A^-},S\circ i_{\mathcal B})^*\Delta_{\mathcal C} &=\tw\left(
	\begin{tikzcd}[ampersand replacement=\&]
	(i_{\mathcal B}\circ F, i_{\mathcal B})^*\Delta \ar[d,"T"] \& (i_{\mathcal B}\circ F, i_{\mathcal A^-}\circ L[-1])^*\Delta \ar[d,"T"]\ar[l,"U" above]\\
	(i_{\mathcal A^-}[-1], i_{\mathcal B})^*\Delta \& (i_{\mathcal A^-}[-1], i_{\mathcal A^-}\circ L[-1])^*\Delta\ar[l,"U" above]
	\end{tikzcd}
	\right)\\
	&\cong\tw\left(
	\begin{tikzcd}[ampersand replacement=\&]
	\Gamma(F) \ar[d,"\mathrm{id}"] \& \mathbf{0}\\
	\Gamma(F)[-1] \& \Gamma(F)\ar[l,"\mathrm{id}" above]
	\end{tikzcd}
	\right)\\
	&\cong\Gamma(F).
	\end{align*}
	
	\emph{\ref{item:pos spherical swap}$\Rightarrow$\ref{item:spherical swap}:} Tautology.
	
	\emph{\ref{item:spherical swap}$\Rightarrow$\ref{item:F spherical}:} This is immediate from Theorem \ref{thm:HL-S}, because $S$ witnesses the $4$-periodicity of the semiorthogonal decomposition of $\mathcal C$.
	
	We now prove the last statement. To begin, assume $S$ is a $\mathcal B$-positive spherical swap of $F$. By definition, this means that
	\begin{equation}\label{B-positivity using adjoints}
	\mathrm{Id}_{\mathcal B}\cong i_{\mathcal B}^L\circ S\circ i_{\mathcal B},
	\end{equation}
	where $i_{\mathcal B}^L$ is the left adjoint of $i_{\mathcal B}$ (and similar notation will apply for other upcoming adjoints). Because $S\circ i_{\mathcal B}$ factors through $i_{\mathcal B^{\perp\perp}}$, we can expand the above formula to
	\begin{align*}
	\mathrm{Id}_{\mathcal B}&\cong i_{\mathcal B}^L\circ i_{\mathcal B^{\perp\perp}}\circ i_{\mathcal B^{\perp\perp}}^L\circ S\circ i_{\mathcal B}\\
	&=\mathbb L_{\mathcal A^-}\circ i_{\mathcal B^{\perp\perp}}^L\circ S\circ i_{\mathcal B},
	\end{align*}
	meaning that
	\[
	\mathbb R_{\mathcal A^-}\cong i_{\mathcal B^{\perp\perp}}^L\circ S\circ i_{\mathcal B}.
	\]
	Next, take the right adjoint of \eqref{B-positivity using adjoints} to obtain
	\[
	\mathrm{Id}_{\mathcal B}\cong i_{\mathcal B}^R\circ S^{-1}\circ i_{\mathcal B},
	\]
	and argue as above to conclude that
	\[
	\mathbb R_{\mathcal A^+}\cong i_{\mathcal B}^R\circ S\circ i_{\mathcal B^{\perp\perp}}.
	\]
	Now Theorem \ref{thm:HL-S} gives the desired isomorphism
	\begin{align*}
	\mathbf{W}'&\cong\mathbb R_{\mathcal A^+}\circ\mathbb R_{\mathcal A^-}\\
	&\cong i_{\mathcal B}^R\circ S\circ i_{\mathcal B^{\perp\perp}}\circ i_{\mathcal B^{\perp\perp}}^L\circ S\circ i_{\mathcal B}\\
	&=i_{\mathcal B}^R\circ S^2\circ i_{\mathcal B}.
	\end{align*}
	
	For $S$ $\mathcal A$-positive, we will have to take advantage of the fact that $i_{\mathcal A^-}$ and $i_{\mathcal A^+}$ have the same domain $\mathcal A$. The formula $S\circ i_{\mathcal A^-}\cong i_{\mathcal A^+}$ implies
	\[
	i_{\mathcal A^+}^L\circ S\circ i_{\mathcal A^-}\cong\mathrm{Id}_{\mathcal A}.
	\]
	Using the formula $i_{\mathcal A^+}=\cone(T)[-1]$ from the proof of Lemma \ref{lem:functor target is admissible}, we also see that
	\[
	\mathbb R_{\mathcal B}=i_{\mathcal A^-}^R\circ i_{\mathcal A^+}\cong\mathrm{Id}_{\mathcal A}[-1],
	\]
	whence it follows that
	\[
	\mathbb L_{\mathcal B}\cong\mathrm{Id}_{\mathcal A}[1].
	\]
	Now, by Theorem \ref{thm:HL-S},
	\begin{align*}
	\mathbf M'&\cong\mathbb L_{\mathcal B}\circ\mathbb L_{\mathcal B^{\perp\perp}}\\
	&\cong\mathbb L_{\mathcal B^{\perp\perp}}[1]\\
	&=i_{\mathcal A^-}^L\circ i_{\mathcal A^+}[1]\\
	&\cong i_{\mathcal A^+}^L\circ S\circ i_{\mathcal A^+}[1]\\
	&\cong i_{\mathcal A^+}^L\circ i_{\mathcal A^-}\circ i_{\mathcal A^-}^L\circ S\circ i_{\mathcal A^+}[1]\\
	&=\mathbb L_{\mathcal B}\circ i_{\mathcal A^-}^L\circ S\circ i_{\mathcal A^+}[1]\\
	&\cong i_{\mathcal A^-}^L\circ S\circ i_{\mathcal A^+}[2]\\
	&\cong i_{\mathcal A^+}^L\circ S\circ i_{\mathcal A^-}\circ i_{\mathcal A^-}^L\circ S\circ i_{\mathcal A^+}[2]\\
	&=i_{\mathcal A^+}^L\circ S^2\circ i_{\mathcal A^+}[2]
	\end{align*}
\end{proof}

\subsection{Swappable stops}
Our goal is to introduce a geometric analog of Section \ref{sec:spherical swaps}, but we will begin with some gentle discussion of pushoffs of Liouville hypersurfaces. Our starting point will thus be a Liouville pair $P\subset\partial_\infty\bar M$. The assumption that $P$ is a Liouville domain means that we are given a contact form $\alpha$ on $\partial_\infty\bar M$ whose Reeb vector field $R$ is transverse to $P$, and we fix this contact form once and for all.

\begin{lemma}\label{lem:Reeb pushoffs coinitial}
	Let $P'$ be any positive pushoff of $P$, meaning that there is some isotopy of Liouville hypersurfaces $P_t$ from $P_0=P$ to $P_1=P'$ for which $\alpha\left(\frac{\partial}{\partial t}P_t\right)>0$ and $\frac{\partial}{\partial t}P_t$ points to the same side of $P_t$ as $R$. Then there is some $\varepsilon>0$ such that $P'$ is a positive pushoff of $\phi_R^\varepsilon P$.
	
	In other words, the set of Reeb pushoffs of $P$ is coinitial in the set of positive pushoffs of $P$.
\end{lemma}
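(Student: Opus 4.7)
The plan is to construct the desired pushoff by prepending a short Reeb translation to the given isotopy. For $\varepsilon > 0$ to be determined, I would set
\[
Q_t := \phi_R^{(1-t)\varepsilon}(P_t), \qquad t \in [0,1],
\]
so that $Q_0 = \phi_R^\varepsilon P$ and $Q_1 = P'$. Each $Q_t$ is still a Liouville hypersurface because the Reeb flow is a strict contactomorphism (since $\mathcal L_R\alpha = d\iota_R\alpha + \iota_R d\alpha = 0$), and the claim then reduces to verifying that the isotopy $Q_t$ is itself a positive pushoff for small enough $\varepsilon$.

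For that, let $X_t := \partial_t P_t$ and decompose $X_t = X_t^T + r_t R$ along $P_t$, with $X_t^T \in T P_t$. The two positivity hypotheses on $X_t$ read $\alpha(X_t) > 0$ and $r_t > 0$, and compactness of $P$ and $[0,1]$ yields a uniform lower bound $c > 0$ on both quantities. The velocity of the modified family is
\[
Y_t := \partial_t Q_t \;=\; \bigl(\phi_R^{(1-t)\varepsilon}\bigr)_* X_t \;-\; \varepsilon R,
\]
viewed as a vector field along $Q_t$. Since Reeb flow preserves $\alpha$ and fixes $R$, one computes $\alpha(Y_t) = \alpha(X_t) - \varepsilon$, and with respect to the splitting $TM|_{Q_t} = T Q_t \oplus \R \cdot R$ the Reeb component of $Y_t$ is $r_t - \varepsilon$. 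Both are strictly positive as soon as $\varepsilon < c$, so any such choice makes $Q_t$ a positive pushoff from $\phi_R^\varepsilon P$ to $P'$, as desired.

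There is no substantive obstacle; the only piece of bookkeeping to get right is that the pushforward $(\phi_R^s)_* \colon T P_t \to T Q_t$ intertwines the two normal splittings, which is automatic from $(\phi_R^s)_* R = R$ and $\phi_R^s(P_t) = Q_t$. With that in hand, the bound $\varepsilon < c$ is the uniform headroom in the original isotopy's positivity that lets the prepended Reeb push not overwhelm it.
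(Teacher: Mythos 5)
Your proposal is correct and takes essentially the same approach as the paper: the paper defines $\phi_R^{-\varepsilon t}P_t$, observes it remains a positive isotopy for small $\varepsilon$ by openness of positivity, and then composes with $\phi_R^\varepsilon$, which produces exactly your family $Q_t = \phi_R^{(1-t)\varepsilon}(P_t)$. Your explicit velocity computation just unpacks the ``openness of positivity'' step that the paper leaves implicit.
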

\begin{proof}
	For $\varepsilon$ sufficiently small, $\phi_R^{-\varepsilon t}P_t$ is still a positive isotopy of Liouville hypersurfaces, because positivity is an open condition. It is contact if and only if $P_t$ itself is. Now compose it with the contactomorphism $\phi_R^\varepsilon$.
\end{proof}

Write $P^-$ and $P^+$ for small negative and positive Reeb pushoffs of $P$, respectively.

\begin{lemma}\label{lem:contact swap characterizations}
	Let $P_i$ be a union of connected components of $P$. The following are equivalent.
	\begin{enumerate}
		\item\label{item:swap pos isotopy} For sufficiently small pushoffs $P_i^\pm$, there is a positive contact isotopy from $P_i^+$ to $P_i^-$ in $\partial_\infty\bar M\setminus P$.
		\item\label{item:swap isotopy} There is a contact isotopy from $P_i^+$ to $P_i^-$ in $\partial_\infty\bar M\setminus P$.
		\item\label{item:swap symplecto} There is a symplectomorphism of $\bar M$ fixing $P\setminus P_i$ and exchanging $P_i^+$ with $P_i^-$.
		\item\label{item:swap contacto} There is a contactomorphism of $\partial_\infty\bar M$ fixing $P\setminus P_i$ and exchanging $P_i^+$ with $P_i^-$.
	\end{enumerate}
\end{lemma}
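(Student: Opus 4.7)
The plan is to establish the cyclic chain of implications $(1) \Rightarrow (2) \Rightarrow (4) \Rightarrow (3) \Rightarrow (1)$. The first implication is tautological, since a positive contact isotopy is \emph{a fortiori} a contact isotopy.

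For $(2) \Rightarrow (4)$, the strategy is to apply the contact isotopy extension theorem in conjunction with the Weinstein-type neighborhood theorem for Liouville hypersurfaces. Starting from an isotopy of Liouville hypersurfaces $P_t$ from $P_i^+$ to $P_i^-$ inside $\partial_\infty\bar M\setminus P$, one promotes it to an ambient contact isotopy $\phi_t$ supported in a small tubular neighborhood of the trace $\bigcup_t P_t$. Because this trace avoids a neighborhood of $P\setminus P_i$, the extension may be chosen to fix $P\setminus P_i$; its time-one map is the desired contactomorphism sending $P_i^+$ to $P_i^-$. To promote this one-sided map to a genuine involution exchanging both pushoffs, I would run the same construction on the disjoint pair $P_i^+\sqcup P_i^-$ using the doubled isotopy $(P_t,P_{1-t})$ in small disjoint tubular neighborhoods, which exist after mild shrinking.

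For $(4) \Rightarrow (3)$, the plan is to extend the contactomorphism $\phi$ of $\partial_\infty\bar M$ to a symplectomorphism of $\bar M$. Using the Liouville collar identification $\partial_\infty\bar M\times(T_0,\infty)\hookrightarrow\bar M$, lift $\phi$ to a conical Liouville automorphism of the collar (with conformal factor determined by $\phi^*\alpha/\alpha$). Damping the lift to the identity with a cutoff in the collar coordinate produces a compactly supported diffeomorphism of $\bar M$; this diffeomorphism is not symplectic, but the discrepancy between the two symplectic forms is exact with compactly supported primitive, so Moser's trick corrects it to a genuine symplectomorphism. The construction occurs in the collar direction transverse to $P$, so the decomposition $P=(P\setminus P_i)\sqcup P_i$ is preserved.

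The substantive implication is $(3) \Rightarrow (1)$, where I expect the main obstacle. Given a symplectomorphism $\psi$ of $\bar M$ whose restriction $\phi=\psi|_\infty$ swaps $P_i^\pm$, one must produce a positive contact isotopy in $\partial_\infty\bar M\setminus P$. The idea is to interpolate between the Reeb flow near $P_i^+$ and its $\phi$-conjugate $\phi\circ\phi_R^t\circ\phi^{-1}$ near $P_i^-$. Because $\phi$ is conical at infinity, $\phi_*R$ is a positive multiple of $R$, so the conjugated flow is itself a positive contact flow; a careful concatenation produces a positive contact isotopy from $P_i^+$ to some positive Reeb pushoff of $P_i^-$, and Lemma \ref{lem:Reeb pushoffs coinitial} then supplies the endpoint correction. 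The principal technical difficulty is arranging positivity (a pointwise condition on the contact Hamiltonian) \emph{simultaneously} with avoidance of $P$ (a global topological condition on the trajectory); the existence of $\psi$ supplies precisely the ``monodromy around $P_i$'' needed to route the isotopy around, rather than through, the remaining stop.
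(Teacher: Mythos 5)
The overall architecture $(1)\Rightarrow(2)\Rightarrow(4)\Rightarrow(3)\Rightarrow(1)$ is fine and closely parallels the paper, which instead runs $(1)\Rightarrow(2)$, $(3)\Rightarrow(4)$ trivially, $(2)\Rightarrow(3)$ by Moser's lemma, and $(4)\Rightarrow(1)$ as the substantive step. There are, however, two genuine gaps.

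First, your argument for $(2)\Rightarrow(4)$ that produces a genuine exchange is broken. You run the ``doubled isotopy'' $(P_t, P_{1-t})$ on $P_i^+\sqcup P_i^-$, but these two families occupy the \emph{same} trace traversed in opposite directions, and at $t=\tfrac12$ both components land on the very same hypersurface $P_{1/2}$. There are no disjoint tubular neighborhoods to separate them, no matter how much you shrink; the isotopy of the disjoint pair collides with itself. A correct version has to be more clever — e.g.\ concatenate the given isotopy with the short Reeb isotopy from $P_i^-$ back to $P_i^+$ to form a loop, and build the ambient isotopy from that loop so that at the half-way time the two pushoffs have genuinely traded places. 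The paper's one-line invocation of Moser is implicitly of this kind.

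Second, and more importantly, your $(3)\Rightarrow(1)$ (equivalently $(4)\Rightarrow(1)$) is where all the content lives, and you only gesture at it. You correctly identify that one wants to concatenate a Reeb-type isotopy with its $\phi$-conjugate and then invoke Lemma \ref{lem:Reeb pushoffs coinitial}, and you correctly flag that the tension is between positivity and avoidance of $P$, but you do not resolve that tension. The paper's resolution is concrete: take $P^t$ to be the short positive Reeb isotopy from $P_i^-$ to $P_i^+$ (this passes through $P_i$, which is fine), form the concatenation $(\phi P^t)\ast P^t$, which is a positive loop based at $P_i^+$, and then observe that because each piece is a globally embedded isotopy, the loop meets $P_i^+$ only at its endpoints. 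Now conjugate the whole loop by a global negative Reeb flow so that it is based at $P_i$ rather than $P_i^+$; the loop is still positive and now meets $P_i$ only at endpoints, and it avoids $P\setminus P_i$ because $P^t$ is short and $\phi$ fixes $P\setminus P_i$. Finally, Lemma \ref{lem:Reeb pushoffs coinitial} lets one trim a small amount off each end to obtain a positive isotopy from some $P_i^+$ to some $P_i^-$ that lives entirely in $\partial_\infty\bar M\setminus P$. Without this concrete loop construction and the key ``embedded pieces avoid $P_i^+$ in the interior'' observation, your proof does not yet establish the implication.

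As a minor point, your remark that ``$\phi_*R$ is a positive multiple of $R$'' is incorrect unless $\phi$ is a strict contactomorphism: in general $\phi_*R$ is a contact vector field with $\alpha(\phi_*R)>0$ but need not lie in the line spanned by $R$. This does not affect positivity of the conjugated isotopy, but the justification should be stated correctly.
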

\begin{proof}
	\emph{\ref{item:swap pos isotopy}$\Rightarrow$\ref{item:swap isotopy}} and \emph{\ref{item:swap symplecto}$\Rightarrow$\ref{item:swap contacto}} are obvious. \emph{\ref{item:swap isotopy}$\Rightarrow$\ref{item:swap symplecto}} follows from Moser's lemma for Liouville manifolds.
	
	\emph{\ref{item:swap contacto}$\Rightarrow$\ref{item:swap pos isotopy}:} Write $\phi$ for the supplied contactomorphism of $\partial_\infty\bar M$, and write $P^t$ for the short positive contact isotopy from $P^0=P_i^-$ to $P^1=P_i^+$ coming from Reeb flow. The image $\phi P^t$ is a positive contact isotopy from $P_i^+$ to $P_i^-$, but it is not guaranteed to stay disjoint from $P_i$ itself. Instead, consider the concatenated path $(\phi P^t)\ast P^t$ is a positive contact isotopy from $P_i^+$ to itself. Because each piece is globally embedded, the concatenated path does not intersect $P_i^+$ except at its endpoints. Composing with a global negative Reeb flow allows us to replace $P_i^+$ by $P_i$, and Lemma \ref{lem:Reeb pushoffs coinitial} lets us turn the loop into the desired isotopy.
\end{proof}
\begin{rmk}
Note that the proof works just as well for $P_i$ a Legendrian submanifold instead of a Liouville hypersurface. In this case, all isotopies can be extended to contact isotopies, so it is equivalent to ask for any Legendrian isotopy from $P_i^+$ to $P_i^-$ in $\partial_\infty\bar M\setminus P$.
\end{rmk}

\begin{defn}\label{def:swappable stop}
	We will say a Legendrian submanifold or Liouville hypersurface $P_i$ is \emph{swappable} if there is \emph{any} isotopy from $P_i^+$ to $P_i^-$ in $\partial_\infty\bar M\setminus P$. In particular, $P_i$ is swappable whenever it satisfies Lemma \ref{lem:contact swap characterizations}.
	
	We will say a stop is swappable if it comes from a swappable Liouville hypersurface.
\end{defn}

Given a swappable stop $\sigma$ of $M$ with fiber $\bar F$, let $\phi$ be an isotopy as in Definition \ref{def:swappable stop}. We obtain autoequivalences $\mathbf M_\phi$ of $\W(\bar F)$ called ``monodromy'', $\mathbf W_\phi$ of $\W(M)$ called ``wrap once'', and $\mathbf S_\phi$ of $\W(M\glu{\sigma}{\sigma_0}\bar F\langle2\rangle)$ called ``swap''. $\mathbf M_\phi$ is the autoequivalence coming from the Liouville automorphism
\[
P^+\overset{\phi}{\cong}P^-\overset{R}{\cong}P^+\qquad\mbox{($\bar F$ is canonically identified with the completion of $P$)},
\]
where the isomorphism $\phi$ comes from applying Moser's lemma to the family $\phi_t(P^+)$, and the isomorphism $R$ comes from Reeb flow. Similarly, $\mathbf W_\phi$ is the autoequivalence realizing deformation invariance for the corresponding family of sectors, which can be built from a zigzag of trivial inclusions.

For $\mathbf S_\phi$, note that $M\glu{\sigma}{\sigma_0}\bar F\langle2\rangle$ is just $M$ with two copies of $\sigma$, which for obvious reasons we will call $\sigma^-$ and $\sigma^+$. The isotopy $\phi$ moving $\sigma^+$ to $\sigma^-$ extends to a deformation of $M\glu{\sigma}{\sigma_0}\bar F\langle2\rangle$ which moves $\sigma^-$ to $\sigma^+$ by the minimal counterclockwise rotation (cf. Proposition \ref{prop:A_n sector presentation}). Declare $\mathbf S_\phi$ to be the autoequivalence induced by this deformation. It is immediate from Lemma \ref{lem:Orlov deformation invariance} that $\mathbf S_\phi$ exchanges the images of $\imath_{\sigma^-}$ and $\imath_{\sigma^+}$.

\begin{proof}[Proof of Theorem \ref{thm:swappable stops give spherical Orlov functors}]
	We wish to apply Proposition \ref{prop:spherical swap iff spherical}, which means we need a geometric interpretation of the category $\mathcal C$ associated to the functor $\imath_\sigma$. We claim that
	\[
	\Perf\,\mathcal C\cong\Perf\,\W(M\dglu{\sigma}{\sigma_0}\Sigma\bar F)\cong\Perf\,\W(\Sigma\bar F\dglu{\sigma_0}{\sigma}M).
	\]
	Indeed, by Corollary \ref{cor:directed gluing formula}, the first directed gluing gives a semiorthogonal presentation $\langle\W(M),\W(\bar F)\rangle$, with $\W(\bar F)$ mapping in by $(i_{\Sigma F})_*\circ\imath_1[1]$. The gluing bimodule is
	\[
	\Gamma^\dagger(\imath_\sigma)\otimes_{\W(\bar F)}\Gamma((\imath_{\sigma_1}[1])^{-1}\imath_{\sigma_0})[1]\cong\Gamma^\dagger(\imath_\sigma)[1],
	\]
	so we reparametrize the $\W(\bar F)$ factor to remove both shifts. By the same argument, $\W(\Sigma\bar F\dglu{\sigma_0}{\sigma}M)$ has a semiorthogonal decomposition $\langle\W(\bar F),\W(M)\rangle$, where $\W(\bar F)$ is parametrized by $(i_{\Sigma F})_*\circ\imath_1[2]$, and the gluing bimodule is $\Gamma(\imath_\sigma)$.
	
	Both of these semiorthogonal decompositions come from gluing descriptions of $M\glu{\sigma}{\sigma_0}\bar F\langle2\rangle$, and by treating the inclusion of $M$ as fixed we see that the two copies of $\W(\bar F)$ come from the two stops. It follows that $\mathbf S_\phi$ is a spherical swap of $\imath_\sigma$, that $\mathbf W_\phi$ is the restriction of $S^2$ to $\W(M)$, and that $\mathbf M_\phi$ is the restriction of $S^2$ to the second copy of $\W(\bar F)$.
	
	It remains to check that $\mathbf S_\phi$ is positive, so let us compare the copies of $\W(\bar F)$. The first gluing description $M\dglu{\sigma}{\sigma_0}\Sigma\bar F$ has $\W(\bar F)$ parametrized by
	\[
	(i_{\Sigma F})_*\circ\imath_1\cong(i_{\Sigma F})_*\circ\imath_0[-1]\cong(i_{\bar F\langle2\rangle})_*\circ\imath_0[-2].
	\]
	This corresponds to a minimal clockwise rotation (through $\sigma_1(\bar F\langle2\rangle)=\sigma^+$) of
	\[
	(i_{\bar F\langle2\rangle})_*\circ\imath_2[-2]\cong(i_M)_*\circ\imath_\sigma[-1].
	\]
	Noting that the rotation takes place in $\bar F\times S^1\subset\partial_\infty\bar F\langle2\rangle$ so that a full rotation is a shift by $2$, this is the same as a minimal counterclockwise rotation of $(i_M)_*\circ\imath_\sigma[1]$.
	
	The second gluing description $\Sigma\bar F\dglu{\sigma_0}{\sigma}M$ has $\W(\bar F)$ parametrized by
	\[
	(i_{\Sigma F})_*\circ\imath_1[2]\cong(i_{\Sigma F})_*\circ\imath_0[1]\cong(i_{\bar F\langle2\rangle})_*\circ\imath_2,
	\]
	though of course this is with respect to a different numbering and grading of the stops of $\bar F\langle2\rangle$. This corresponds to a minimal counterclockwise rotation of
	\[
	(i_{\bar F\langle2\rangle})_*\circ\imath_0\cong(i_M)_*\circ\imath_\sigma[1]
	\]
	through $\sigma^-$. Since $\mathbf S_\phi$ was defined on $\sigma^-$ by minimal counterclockwise rotation, it follows by Lemma \ref{lem:Orlov deformation invariance} that $\mathbf S_\phi$ is $\mathcal A$-positive.
	
	For $\mathcal B$-positivity, we need to check that $\mathbf S_\phi\circ(i_M)_*$ becomes isomorphic to $(i_M)_*$ after removing the stop which starts as $\sigma^+$, which follows from Proposition \ref{prop:isotopic inclusions}.
\end{proof}

	\bibliographystyle{plain}
	\bibliography{large-bib}

\def\cprime{$'$}
\begin{thebibliography}{10}

\bibitem{AbouzaidGanatra20XX}
M.~Abouzaid and S.~Ganatra.
\newblock in preparation.

\bibitem{AbouzaidSeidel2010}
M.~Abouzaid and P.~Seidel.
\newblock An open string analogue of {V}iterbo functoriality.
\newblock {\em Geom. Topol.}, 14:627--718, 2010.

\bibitem{Abouzaid2010}
Mohammed Abouzaid.
\newblock A geometric criterion for generating the {F}ukaya category.
\newblock {\em Publ. Math. Inst. Hautes \'Etudes Sci.}, (112):191--240, 2010.

\bibitem{AnnoLogvinenko2017}
Rina Anno and Timothy Logvinenko.
\newblock Spherical {DG}-functors.
\newblock {\em Journal of the European Mathematical Society}, 19(9):2577--2656,
  2017.

\bibitem{Bondal1990}
A.~Bondal.
\newblock Representations of associative algebras and coherent sheaves.
\newblock {\em Math. USSR Izvestiya}, 34:23--42, 1990.

\bibitem{Bottman2017}
Nathaniel Bottman.
\newblock Moduli spaces of witch curves topologically realize the
  2-associahedra, 2017.
\newblock arXiv:1712.01194.

\bibitem{BravDyckerhoff2019}
Christopher Brav and Tobias Dyckerhoff.
\newblock Relative calabi{\textendash}yau structures.
\newblock {\em Compositio Mathematica}, 155(2):372--412, February 2019.

\bibitem{Dahinden2017}
Lucas Dahinden.
\newblock The bott{\textendash}samelson theorem for positive legendrian
  isotopies.
\newblock {\em Abhandlungen aus dem Mathematischen Seminar der Universit\"{a}t
  Hamburg}, 88(1):87--96, May 2017.

\bibitem{DrinfeldGaitsgory2015}
V.~Drinfeld and D.~Gaitsgory.
\newblock Compact generation of the category of
  {\textdollar}{\textbackslash}mathrm$\lbrace$d$\rbrace${\textdollar}-modules
  on the stack of {\textdollar}g{\textdollar}-bundles on a curve.
\newblock {\em Cambridge Journal of Mathematics}, 3(1{\textendash}2):19--125,
  2015.

\bibitem{Efimov2013}
Alexander Efimov.
\newblock Homotopy finiteness of some dg categories from algebraic geometry,
  2013.
\newblock arXiv:1308.0135.

\bibitem{EliashbergGanatraLazarev2018}
Yakov Eliashberg, Sheel Ganatra, and Oleg Lazarev.
\newblock Flexible lagrangians.
\newblock {\em International Mathematics Research Notices}, May 2018.

\bibitem{FangLiuTreumannZaslow2011}
Bohan Fang, Chiu-Chu~Melissa Liu, David Treumann, and Eric Zaslow.
\newblock The coherent-constructible correspondence and homological mirror
  symmetry for toric varieties.
\newblock In {\em Geometry and analysis. {N}o. 2}, volume~18 of {\em Adv. Lect.
  Math. (ALM)}, pages 3--37. Int. Press, Somerville, MA, 2011.

\bibitem{FrauenfelderLabrousseSchlenk2015}
Urs Frauenfelder, Cl{\'{e}}mence Labrousse, and Felix Schlenk.
\newblock Slow volume growth for reeb flows on spherizations and contact
  bott{\textendash}samelson theorems.
\newblock {\em Journal of Topology and Analysis}, 07(03):407--451, May 2015.

\bibitem{GanatraPardonShende2017}
S.~Ganatra, J.~Pardon, and V.~Shende.
\newblock Covariantly functorial wrapped floer theory on liouville sectors,
  2017.
\newblock arXiv:1706.03152.

\bibitem{GanatraPardonShende2018}
S.~Ganatra, J.~Pardon, and V.~Shende.
\newblock Structural results in wrapped floer theory, 2018.
\newblock arXiv:1809.03427.

\bibitem{Ganatra2013}
Sheel Ganatra.
\newblock Symplectic cohomology and duality for the wrapped fukaya category,
  2013.
\newblock arXiv:1304.7312.

\bibitem{Gao2017}
Yuan Gao.
\newblock Wrapped floer cohomology and lagrangian correspondences, 2017.
\newblock arXiv:1703.04032.

\bibitem{HalpernLeistnerShipman2016}
Daniel Halpern-Leistner and Ian Shipman.
\newblock Autoequivalences of derived categories via geometric invariant
  theory.
\newblock {\em Advances in Mathematics}, 303:1264--1299, November 2016.

\bibitem{KatzarkovKerr2017}
L.~Katzarkov and G.~Kerr.
\newblock Partially wrapped fukaya categories of simplicial skeleta, 2017.
\newblock arXiv:1708.06038.

\bibitem{KatzarkovPanditSpaide2017}
L.~Katzarkov, P.~Pandit, and T.~Spaide.
\newblock Calabi-yau structures, spherical functors, and shifted symplectic
  structures, 2017.
\newblock arXiv:1701.07789.

\bibitem{KuznetsovLunts2014}
Alexander Kuznetsov and Valery~A. Lunts.
\newblock Categorical resolutions of irrational singularities.
\newblock {\em International Mathematics Research Notices},
  2015(13):4536--4625, May 2014.

\bibitem{LyubashenkoOvsienko2006}
V.~Lyubashenko and S.~Ovsienko.
\newblock A construction of quotient {$A_\infty$}-categories.
\newblock {\em Homology, Homotopy Appl.}, 8:157--203, 2006.

\bibitem{MauWehrheimWoodward2018}
S.~Ma'u, K.~Wehrheim, and C.~Woodward.
\newblock {$A_\infty$} functors for lagrangian correspondences.
\newblock {\em Selecta Mathematica}, 24(3):1913--2002, March 2018.

\bibitem{Neeman1996}
Amnon Neeman.
\newblock The grothendieck duality theorem via bousfield's techniques and brown
  representability.
\newblock {\em Journal of the American Mathematical Society}, 9(01):205--237,
  January 1996.

\bibitem{Ritter2013}
Alexander~F. Ritter.
\newblock Topological quantum field theory structure on symplectic cohomology.
\newblock {\em Journal of Topology}, 6(2):391--489, January 2013.

\bibitem{Segal2017}
Ed~Segal.
\newblock All autoequivalences are spherical twists.
\newblock {\em International Mathematics Research Notices}, page rnw326,
  January 2017.

\bibitem{Seidel2008}
P.~Seidel.
\newblock {$A_\infty$}-subalgebras and natural transformations.
\newblock {\em Homology, Homotopy Appl.}, 10:83--114, 2008.

\bibitem{Seidel2012}
P.~Seidel.
\newblock Fukaya {$A_\infty$}-structures associated to {L}efschetz
  fibrations.{I}.
\newblock {\em J. Sympl. Geom.}, 10:325--388, 2012.

\bibitem{Seidel2008a}
Paul Seidel.
\newblock {\em Fukaya categories and {P}icard-{L}efschetz theory}.
\newblock Zurich Lectures in Advanced Mathematics. European Mathematical
  Society (EMS), Z\"urich, 2008.

\bibitem{Silver1967}
L.~Silver.
\newblock Noncommutative localizations and applications.
\newblock {\em Journal of Algebra}, 7(1):44--76, September 1967.

\bibitem{Sylvan2019}
Zachary Sylvan.
\newblock On partially wrapped fukaya categories.
\newblock {\em Journal of Topology}, 12(2):372--441, 2019.

\bibitem{Tabuada2007}
Gonçalo Tabuada.
\newblock {\em Théorie homotopique des DG-catégories}.
\newblock PhD thesis, 2007.
\newblock Thèse de doctorat dirigée par Keller, Bernhard Mathématiques Paris
  7 2007. arXiv:0710.4303.

\bibitem{Tanaka2019}
Hiro~Lee Tanaka.
\newblock Cyclic structures and broken cycles, 2019.
\newblock arXiv:1907.03301.

\bibitem{Viterbo1999}
C.~Viterbo.
\newblock Functors and computations in {F}loer homology with applications. {I}.
\newblock {\em Geom. Funct. Anal.}, 9(5):985--1033, 1999.

\end{thebibliography}

\end{document}